\def\VR{\kern-\arraycolsep\strut\vrule &\kern-\arraycolsep}
\def\vr{\kern-\arraycolsep & \kern-\arraycolsep}
\newcommand{\be}{\begin{enumerate}}
\newtheorem{theorem}{Theorem}
\newtheorem{prop}{Proposition}
\newtheorem{lemma}[prop]{Lemma}
\newtheorem{corollary}[prop]{Corollary}
\theoremstyle{definition}
\newtheorem{definition}[prop]{Definition}
\newtheorem{notation}[prop]{Notation}
\newtheorem{rmk}{Remark}
\newtheorem{obs}{Observation}
\newtheorem{ex}{Example}
\tikzset{join/.code=\tikzset{after node path={%
\ifx\tikzchainprevious\pgfutil@empty\else(\tikzchainprevious)%
edge[every join]#1(\tikzchaincurrent)\fi}}}
\tikzset{>=stealth',every on chain/.append style={join},
         every join/.style={->}}
\tikzstyle{labeled}=[execute at begin node=$\scriptstyle,
\tikzset{join/.code=\tikzset{after node path={%
\ifx\tikzchainprevious\pgfutil@empty\else(\tikzchainprevious)%
edge[every join]#1(\tikzchaincurrent)\fi}}}
\begin{document}
\title{An Isometry Theorem for Generalized Persistence Modules}

\author{Killian Meehan, David Meyer}



\maketitle
\setcounter{tocdepth}{1}
\tikzset{
    >=stealth',
    punkt/.style={
           rectangle,
           rounded corners,
           draw=black, very thick,
           text width=6.5em,
           minimum height=2em,
           text centered},
    pil/.style={
           ->,
           thin,
           shorten <=2pt,
           shorten >=2pt,},
     bounce1/.style={
           thin,
           shorten <=2pt,},
     bounce2/.style={
           ->,
           thin,
           shorten >=2pt,},
     posetb/.style={
           <-,
           shorten >=2pt,
           shorten <=2pt,},
     posetf/.style={
           ->,
           shorten >=2pt,
           shorten <=2pt,}
}
\begin{abstract}
In recent work, generalized persistence modules have proved useful in distinguishing noise from the legitimate topological features of a data set.  Algebraically, generalized persistence modules can be viewed as representations for the poset algebra.  The interplay between various metrics on persistence modules has been of wide interest, most notably, the isometry theorem of Bauer and Lesnick for (one-dimensional) persistence modules.  The interleaving metric of Bubenik, de Silva and Scott endows the collection of representations of a poset with values in any category with the structure of a metric space.  This metric makes sense for any poset, and has the advantage that post-composition by any functor is a contraction.  In this paper, we prove an isometry theorem using this interleaving metric on a full subcategory of generalized persistence modules for a large class of posets.
\end{abstract}
\section{Introduction}
\subsection{Persistent Homology}
\noindent
Informally, a \emph{generalized persistence module} is a representation of a poset $P$ with values in a category $\mathcal{D}$.  That is, if $\mathcal{D}$ is a category, a generalized persistence module $M$ with values in $\mathcal{D}$ assigns an object $M(x)$ of $\mathcal{D}$ for each $x \in P$, and a morphism $M(x \leq y)$ in ${Mor}_{\mathcal{D}}(M(x),M(y))$ for each $x, y \in P$ with $x \leq y$ satisfying
$$M(x \leq z)=M(y \leq z) \circ M(x \leq y) \textrm{ whenever }x, y , z \in P \textrm{ with }x \leq y \leq z.$$  

Perhaps surprisingly, the study of such objects is useful in topological data analysis.  \emph{Persistent homology} uses  generalized persistence modules to attempt to discern the topological properties of a finite data set.  We briefly summarize the algorithm applied to a point cloud of data in the persistent homology setting.  This will lead to \emph{one-dimensional (generalized) persistence modules}, where the poset $P = (0,\infty)$ or $\mathbb{R}$.  This part of the discussion corresponds to $F$ and then $H \circ F$ in our schematic below.  For a more extensive introduction, see \cite{computing}, \cite{topological} or \cite{oudot}.  The typical workflow for persistent homology is as follows: 
\vspace{.2in} 
\tikzstyle{decision} = [diamond, draw, fill=blue!20, 
    text width=4.5em, text badly centered, node distance=3cm, inner sep=0pt]
\tikzstyle{block} = [rectangle, draw, fill=white, 
    text width=5em, text centered, rounded corners, minimum height=4em]
\tikzstyle{line} = [draw, -latex']
 \tikzstyle{circ} = [draw, ellipse,fill=red!20, node distance=3cm,
     minimum height=2em]

\tikzstyle{colored1} = [rectangle, draw, fill=blue!20, 
    text width=5em, text centered, rounded corners, minimum height=4em]
    
\begin{tikzpicture}[node distance =4cm, auto]
\node [cloud, draw,cloud puffs=10,cloud puff arc=120, aspect=2, inner ysep=1em] (data) {Data} ;
    \node [block, right of=data](comp) {Persistence modules with values in $Simp$};
    \node [colored1, right of=comp] (module) {Persistence modules with values in $K$-mod};
    \node [block, right of=module] (rank) {Topological space of invariants};
    \path [line](data) -- node {$F$} (comp);
    \path [line] (comp) -- node {$H$} (module);
    \path [line] (module) --  node {$J$} (rank);
\end{tikzpicture}

\vspace*{.2 in}
\noindent
(In the above, $Simp$ denotes the category of abstract simplicial complexes) Suppose, for example, we wish to decide whether a data set $D \subseteq {\mathbb{R}}^2$ should be more correctly interpreted as an annulus or a disk.  In order to decide between the two candidates, one calculates the homology of a filtration of simplicial complexes associated to the data set.  This uses the Vietoris-Rips complex ${(C_{\epsilon})}_{\epsilon > 0}$.

Specifically, for each $\epsilon > 0$, we let $C_{\epsilon}$ be the abstract simplicial complex whose $k$-simplices are determined by data points $x_1, x_2, ... x_{k+1}\in D$ where $d(x_i, x_j) \leq \epsilon$ for all $1 \leq i, j \leq k+1$.  Clearly, for ${\sigma} \leq {\tau}$ in $(0, \infty)$,  there is an inclusion of simplicial complexes $C_{\sigma} \hookrightarrow C_{\tau}$, thus we obtain a filtration of simplicial complexes indexed by $(0,\infty)$.  Therefore, the assignment $F:\epsilon \to C_{\epsilon}$ is a representation of the poset $(0, \infty)$ (in fact, a $P$-space) taking values in $Simp$.  That is to say, $F$ is a generalized persistence module for $P =(0,\infty)$ and $\mathcal{D} = Simp$.  Since we wish to distinguish between an annulus and a disk, we apply the first homology functor $H_1(-,K)$ to $F$ (where $K$ is some field), to obtain the representation of $P$ with values in $K$-mod, $\epsilon \to H_1(C_{\epsilon},K)$.

Thus, the assigment $H_1(-,K) \circ F \textrm{ given by } \epsilon \to H_1(C_{\epsilon},K)$ is a one-dimensional persistence module.  As $\epsilon$ increases generators for $H_1$ are born and die, as cycles appear and become boundaries.  In persistent homology, one takes the viewpoint that true topological features of the data set can be distinguished from noise by looking for generators of homology which "persist" for a long period of time.  Informally, one "keeps" an indecomposable summands of $H_1(-,K) \circ F$ when it corresponds to a wide interval.  Conversely, cycles which disappear quickly after their appearance (narrow ones) are interpreted as noise and disregarded.  

This technique has been widely successful in topological data analysis (see, for example, \cite{carlsson_top}, \cite{stability}, \cite{chazal}, \cite{ghrist}, \cite{carlsson_local}, \cite{applied_1}, \cite{applied_2}, and \cite{applied_3}).  Typically, the category of persistence modules with values in $K$-mod is given a metric-like structure.  So-called \emph{soft stability theorems}, which involve the continuity of the composition $H \circ F$ in the schematic, have been proven.  Philosophically, these results have established the utility of this method from the perspective of data analysis  (See, for example, \cite{stability}). \emph{Hard stability theorems}, on the other hand, concern the continuity of $J$ in our schematic.

\subsection{Algebraic Stability}
One special type of hard stability theorem is an \emph{algebraic stability theorem}.  In such a theorem, one endows a collection of generalized persistence modules with two metric structures, and  an automorphism $J$ is shown to be a contraction or an isometry.  This situation can be fit into our previous workflow diagram by choosing the topological space of invariants to be the collection of generalized persistence modules itself, endowed with the alternate metric structure.  Of particular interest is the case when $J$ is the identity function and the metrics are an \emph{interleaving metric} and a \emph{bottleneck metric}.  Algebraic stability theorems of this type are common (see \cite{lesnick}, \cite{zigzag}, \cite{induced_matchings}, and \cite{carlsson}).  While in the literature, the word "interleaving" is frequently used to describe slightly different metrics, we believe that the interleaving metric suggested by Bubenik, de Silva and Scott (see \cite{bubenik}) has the advantage of being both most general, and categorical in nature.  This interleaving metric makes sense on any poset $P$, and reduces to the interleaving metric of \cite{induced_matchings} when $P = (0,\infty)$.  Alternatively, a bottleneck metric is nothing more than a way of extending a metric defined on a set ${\Sigma}$, to the collection of all ${\mathbb{Z}}_{\geq 0}$-valued functions with finite support on ${\Sigma}$.  In this context, this is applied to the decomposition of a generalized persistence module into its indecomposable summands with their corresponding multiplicities.

\subsection{Connections to Finite-dimensional Algebras}
This paper concerns algebraic stability studied using techniques from the representation theory of algebras.  Such representations appear because one-dimensional persistence modules arising from data always admit the structure of a representation of a finite totally ordered set.  This fact comes from the simple observation that the one-dimensional persistence module given by $F:\epsilon \to C_{\epsilon}$ is necessarily a step function.  More precisely, let 
$$P_n = \{\epsilon_1 < \epsilon_2 <... < \epsilon_n\}  = \{ \epsilon \in (0,\infty): C_{\epsilon} \neq \lim \limits_{\tau \to {{\epsilon}^-}} C_{\tau} \}.$$

By definition, $F$ is constant on all intervals of the form $[\epsilon_i,\epsilon_{i+1})$.  Thus, clearly, both $F$ and $H_1(-,K) \circ F$ admit the structure of a generalized persistence module for $P= P_n$.  
When we restrict the structure of a one-dimensional persistence module to $P_n$, we say informally that we are \emph{discretizing}.  In this sense, generalized persistence modules for finite totally ordered sets are the discrete analogue of one-dimensional persistence modules.  At this point the authors wish to point out two issues arising when one discretizes.  First, a finite data set $D$ gives rise to not only a generalized persistence module, but also to its algebra.  Thus, a priori two persistence modules may not be able to be compared simply because they are not modules for the same algebra.  Second, information about the width of the interval $[\epsilon_i, \epsilon_{i+1})$ in relevant to the analysis, but seems to be lost.  Both of these issues are not addressed in this paper, though they are dealt with successfully in \cite{meehan_meyer_2}.

While one-dimensional persistence modules will always discretize to a generalized persistence module for a finite totally ordered set, representations of many other infinite families of finite posets also have a physical interpretation in the literature (see \cite{zigzag}, \cite{carlsson}, \cite{ladder}).  For example, multi-dimensional persistence modules (see \cite{carlsson}) will discretize in an analagous fashion to representations of a different family of finite posets.  This is relevant because there is a categorical equivalence between the generalized persistence modules for a finite poset $P$ with values in $K$-mod, and the module category of the finite-dimensional $K$-algebra $A(P)$, the poset (or incidence) algebra of $P$.  The module theory (representation theory) of such algebras has been widely studied (see, for example \cite{strongly_simply_connected}, \cite{baclawski}, \cite{cibils}, \cite{feinberg}, \cite{kleiner}, \cite{tame}, \cite{loupias}, \cite{nazarova}, \cite{quadratic}, \cite{mio}, and many others).  Thus, by passing to the jump discontinuities of a filtration of simplical complexes one may apply techniques from the representation theory of finite-dimensional algebras.

This perspective, however, suggests the need for caution.  While it is well-known that the set of isomorphism classes of indecomposable modules for the algebra $A(P_n)$ is \emph{finite}, this situation is far from typical.  In fact, for a \emph{generic} finite poset $P$, the representation theory of the algebra $A(P)$ is undecidable in the sense of first order logic.

In the above, generic means for all but those on a known list.  In particular, for all $P$ not on the list, the algebra $A(P)$ has infinitely many isomorphism classes of indecomposable modules.  Indeed, this is the case for the algebras associated to many of the posets which arise when one discretizes in a situation pertinent to topological data analysis.  This typically happens for multi-dimensional persistence modules (see \cite{carlsson}), for example.  

Because of this, studying \emph{arbitrary} generalized persistence modules in complete generality is hopeless.  Indeed, if a possibly infinite poset discretizes to a finite poset $P$, and the module category for $A(P)$ is undecidable, the same holds for generalized persistence modules for the original poset.  Moreover, our intuition from persistent homology tells us that indecomposable modules should come with a notion of widths which can be measured, in order to decide whether they should be kept or interpreted as noise.  In order to reconcile these two issues, we pass from the full category of all $A(P)$-modules, to a more manageable full subcategory where we can make sense of what it means for indecomposable modules to be "wide."  This suggests the following template for a representation-theoretic algebraic stability theorem:

\vspace{.1 in}
\noindent
Let $P$ be a finite poset of some prescribed type, and let $K$ be a field.  Choose a full subcategory  $\mathcal{C} \subseteq A(P)$-mod, and let $D$ and $D_B$ be two metrics on $\mathcal{C}$ where;
\begin{enumerate}[(i.)]
\item $D$ is the interleaving distance of \cite{bubenik} restricted to $\mathcal{C}$, and
\item $D_B$ is a bottleneck metric on $\mathcal{C}$ which incorporates some algebraic information.
\end{enumerate}
Prove that
$$(\mathcal{C},D) \xrightarrow{Id} (\mathcal{C},D_B) $$
is an isometry.

\medskip
In addition, the class of posets covered should contain all the posets $P_n, n \in \mathbb{N}$.  In addition, the category $\mathcal{C}$ should reduce to the full module category when $P = P_n$.  When this is the case, the theorem should be a discrete version of the classical isometry theorem \cite{induced_matchings}.  If possible, elements of $\mathcal{C}$ should have a nice physical description.
\medskip

\subsection{Main Results}
Our algebraic stability theorem is stated below.  
\vspace{.3 in}

\begin{theorem}
\label{main}
Let $P$ be an $n$-Vee and let $\mathcal{C}$ be the full subcategory of $A(P)$-modules consisting of direct sums of convex modules.  Let $(a,b) \in {\mathbb{N}} \times {\mathbb{N}}$ be a weight and let $D$ denote interleaving distance (corresponding to the weight $(a,b)$) restricted to $\mathcal{C}$.  \\Let  $W(M) = \textrm{min}\{ \epsilon: \textrm{Hom}(M, M\Gamma \Lambda) = 0, \Gamma, \Lambda \in  \mathcal{T}(\mathcal{P}), h(\Gamma), h(\Lambda) \leq \epsilon \}$, and let $D_B$ be the bottleneck distance on $\mathcal{C}$ corresponding to the interleaving distance and $W$.  Then, the identity is an isometry from $$(\mathcal{C},D) \xrightarrow{Id} (\mathcal{C},D_B).$$
\end{theorem}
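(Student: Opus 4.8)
The plan is to establish the two inequalities $D(M,N)\le D_B(M,N)$ and $D_B(M,N)\le D(M,N)$ for all $M,N\in\mathcal{C}$. Fix decompositions $M\cong\bigoplus_i M_i$ and $N\cong\bigoplus_j N_j$ into convex summands; since $A(P)$-mod is Krull--Schmidt these are unique up to reordering and isomorphism, so $D_B$ is well defined on them. I will say that $M$ and $N$ are \emph{$\epsilon$-interleaved} if there are $\Gamma,\Lambda\in\mathcal{T}(\mathcal{P})$ with $h(\Gamma),h(\Lambda)\le\epsilon$ together with morphisms $\phi\colon M\to N\Gamma$ and $\psi\colon N\to M\Lambda$ whose two composites are the respective canonical transition maps of $M$ and $N$ into their iterated shifts; then $D(M,N)$ is the infimum of such $\epsilon$.

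First I would prove $D\le D_B$, i.e.\ realize a matching as an interleaving. Let $\sigma$ be a partial matching of the $M_i$ with the $N_j$ of cost at most $\epsilon$: every matched pair satisfies $D(M_i,N_{\sigma(i)})\le\epsilon$, and every unmatched summand $L$ satisfies $W(L)\le\epsilon$. For matched pairs fix $\epsilon'$-interleavings for $\epsilon'>\epsilon$. For an unmatched convex summand $L$, the defining condition $\Hom(L,L\Gamma\Lambda)=0$ for suitable $\Gamma,\Lambda$ of height $\le\epsilon$ forces the transition map $L\to L\Gamma\Lambda$ to vanish; here convexity is essential, since the $\Hom$-space from a convex module to a shift of it is at most one-dimensional and, when nonzero, spanned by the transition map, so this vanishing is precisely an $\epsilon$-interleaving of $L$ with the zero module. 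It then suffices to note that a finite direct sum of $\epsilon'$-interleavings is an $\epsilon'$-interleaving: choose a single pair $(\Gamma,\Lambda)$ dominating all those that occur (possible because $\mathcal{T}(\mathcal{P})$ is closed under the relevant joins and enlarging a translation preserves the interleaving relation) and take the direct sum of the interleaving morphisms. Summing the matched interleavings with the trivial interleavings of the unmatched summands gives an $\epsilon'$-interleaving of $M$ and $N$; letting $\epsilon'\downarrow\epsilon$ and then taking the infimum over $\sigma$ gives $D(M,N)\le D_B(M,N)$.

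The reverse inequality $D_B\le D$ is the substantive part, an induced-matching argument modelled on \cite{induced_matchings}. From an $\epsilon$-interleaving of $M$ and $N$ take the morphism $\phi\colon M\to N\Gamma$ and factor it as $M\onto \Ima\phi\hookrightarrow N\Gamma$. The heart of the proof is a pair of lemmas, established using the classification of $\Hom$-spaces and of sub- and quotient modules among convex $A(P)$-modules for an $n$-Vee: (a) a monomorphism between direct sums of convex modules induces an injective partial matching of the convex summands whose matched pairs are within $\epsilon$ (in $D$) and whose unmatched summands have $W\le\epsilon$; and (b) the dual statement for epimorphisms. Applying (a) to $\Ima\phi\hookrightarrow N\Gamma$ and (b) to $M\onto\Ima\phi$, undoing the shift by $\Gamma$, and composing the two matchings while tracking heights against the weight $(a,b)$, yields a partial matching of the summands of $M$ and $N$ of cost at most $\epsilon$; hence $D_B(M,N)\le\epsilon$, and the infimum over interleavings gives $D_B\le D$. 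Combining the two inequalities proves the theorem.

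I expect the main obstacle to lie in the induced-matching step, and specifically in the behavior of convex modules at the branch vertex of the $n$-Vee: a convex module whose support meets both arms transforms under translation quite differently from one supported in a single arm, so the induced matching must split or pair such modules correctly while keeping the error at exactly $\epsilon$ rather than a constant multiple of it, and the asymmetric weight $(a,b)$ must be carried through every height estimate. Showing that $\Ima\phi$ is again a direct sum of convex modules—so that lemmas (a) and (b) apply at all—and that the two induced matchings compose without loss is precisely where the hypothesis that $P$ is an $n$-Vee and that we have restricted to $\mathcal{C}$ gets used.
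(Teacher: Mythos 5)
Your overall plan — prove both inequalities, with $D\le D_B$ via diagonal interleavings and $D_B\le D$ via induced matchings à la Bauer--Lesnick — matches the paper's template, and your $D\le D_B$ argument is essentially the paper's one-liner, appealing as you do to the possibility of dominating several translations by a single one of the same height (Lemma~\ref{T(P)} in the paper; this is where asymmetry of the $n$-Vee enters). The real issue is the $D_B\le D$ direction, and there your proposal has a genuine gap precisely at the point you flag as ``the main obstacle.''

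Your proposed lemmas~(a) and~(b) — that a mono (resp.\ epi) between elements of $\mathcal{C}$ induces a matching of their convex summands — are \emph{false} for an $n$-Vee with $n\ge 2$. The paper makes this explicit: Proposition~\ref{inj surj} is stated only in terms of counts of \emph{maximal totally ordered subsets} of supports, not of the convex summands themselves, and the example immediately following it exhibits a convex module $A_1$ of full support over a $2$-Vee that embeds into $C_1\oplus C_2$ where each $C_i$ is convex and supported in a single branch. No injective matching of convex summands exists there. Likewise, over an $n$-Vee the image of a morphism between elements of $\mathcal{C}$ need not lie in $\mathcal{C}$ — it is thin, but thin is not convex when the poset has a vertex with two incomparable covers. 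So both the factorization through $\Ima\phi$ and the two matching lemmas require the poset to be totally ordered, and they cannot be applied directly at the level of the whole $n$-Vee.

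The paper resolves this not by a sharper version of (a) and (b) but by a different reduction. Because a maximal translation $\Lambda_\epsilon$ with $\epsilon<aT+b$ fixes the branch vertex $m$, the interleaving can be ``diagonalized'' (Lemma~\ref{diagonalize} and its corollary) along the partition $\{\{m\}\}\cup\{(m,M_i]\}$ of $P$: there is no nonzero morphism between a convex module supported at $m$ and the translate of one supported strictly in a branch, nor between translates of convex modules supported in distinct branches. This yields separate $(\Lambda,\Lambda)$-interleavings between (i) the summands of $I$ and $M$ supported at $m$, and (ii) for each branch, the summands supported strictly inside that branch. Case~(ii) is a $1$-Vee, where your Bauer--Lesnick argument (Theorem~\ref{matching} in the paper) is valid, $\Ima\phi$ is automatically in $\mathcal{C}$, and the mono/epi matching lemmas hold. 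Case~(i) requires a completely different tool: Proposition~\ref{propfix}, a Hall-marriage-type argument (Lemma~\ref{function}) that produces a matching from the nonvanishing of the components $\psi^t_s\Lambda\phi^s_t$ at $m$. For $\epsilon\ge aT+b$ a further case analysis via the quotient ideals $\mathcal{I}_{i_0}$ reduces again to the totally ordered situation, and when $P$ is not asymmetric the relevant summands all have $W\le\epsilon$ so the matching is vacuous. Without the diagonalization step and the marriage argument for the summands through $m$, the induced-matching machinery simply does not get off the ground over an $n$-Vee.
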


Of course, much of the language in the Theorem has not yet been defined.  The collection of $n$-Vees generalizes $\{P_m\}$ in the sense that a $1$-Vee is exactly a finite totally ordered set.  Such a theorem is very much in the flavor of classical algebraic stability theorems (see \cite{zigzag},\cite{lesnick}, \cite{bubenik}).  It is common, for example, for the bottleneck metric restricted to indecomposables to be the  interleaving metric.  When $P$ is a $1$-Vee and the choice of weight is $(1,1)$, Theorem \ref{main} is a discrete analogue of the standard isometry theorem of Bauer and Lesnick \cite{induced_matchings}, though with a different notion of width, and with the interleaving metric of \cite{bubenik}.  Indeed, both Theorems \ref{matching} and \ref{main} can be viewed as extensions of the discrete analogue to the classical isometry theorem \cite{induced_matchings}.

In the statement Theorem \ref{main}, $W$ corresponds to our choice for the width function.  We take our inspiration for $W$ from \cite{zigzag} and \cite{lesnick}, but do not use the thickness of the support of a module $M$ in a direction.  Instead, our width is defined in terms of algebraic conditions, although the two agree in the case of one-dimensional persistence modules.  Our choice of the category $\mathcal{C}$ is natural both from the perspective of persistent homology and from that of representation theory.  Once some parameters are fixed, the collection of interleavings between two elements of $\mathcal{C}$ has the structure of an affine variety (see Proposition \ref{variety} and Examples \ref{ex 3}, \ref{ex 4}, and \ref{ex new}).  The interleaving distance between two generalized persistence modules is the smallest value of a parameter for which the corresponding variety of interleavings in non-empty (see Remark \ref{variety distance} and  Example \ref{ex new}).

While certainly motivated by stability theorems in topological data analysis, the authors take the viewpoint that such a theorem need not make explicit reference to a data set.  This paper will be organized as follows:  in Section \ref{preliminaries} we give a brief survey of the relevant background information, in Section \ref{posets} we define the class of posets in which we will work, and in Section \ref{homomorphisms and translations} we investigate the action of the collection of translations on the set of homomorphisms between convex modules.  Then, in Sections \ref{totally} we concentrate on $1$-Vees, that is, totally ordered finite sets.  In Section \ref{totally} in particular, we owe much to Bauer and Lesnick \cite{induced_matchings}.  Then, in Section \ref{main section} we prove our main results.  After the proof of the main results we include some examples.

\section{Acknowledgements}
The authors wish to acknowledge Calin Chindris both for introducing us to this field of study, and for all of his guidance.  K. Meehan was supported by the NSA under grant H98230-15-1-0022.

\section{Preliminaries}
\label{preliminaries}
\subsection{Generalized Persistence Modules}
Recall that if $P$ is a poset and $\mathcal{D}$ is a category, a generalized persistence module $M$ with values in $\mathcal{D}$ assigns an object $M(x)$ of $\mathcal{D}$ for each $x \in P$, and a morphism $M(x \leq y)$ in ${Mor}_{\mathcal{D}}(M(x),M(y))$ for each $x, y \in P$ with $x \leq y$ satisfying
$$M(x \leq z)=M(y \leq z) \circ M(x \leq y)\textrm{ whenever }x, y , z \in P \textrm{ and }x \leq y \leq z.$$ 

Let ${\mathcal{D}}^P$ denote the collection of generalized persistence modules for $P$ with values in $\mathcal{D}$.  If $F, G \in {\mathcal{D}}^P$, a morphism from $F$ to $G$ is a collection of morphisms $\{\phi(x)\}$, with $\phi(x) \in {Mor}_{\mathcal{D}}(F(x),G(x))$ for all $x \in P$, such that for all $x \leq y$ we have a commutative diagram below for each $x \leq y$ in $P$.

\begin{center}

\begin{tikzpicture}[commutative diagrams/every diagram]
	\matrix[matrix of math nodes, name=m, commutative diagrams/every cell,row sep=.7cm,column sep=1cm] {
	 \pgftransformscale{0.2}
		F(x) & F(y)\\
		G(x) & G(y)\\ };
		
		\path[commutative diagrams/.cd, every arrow, every label]
			(m-1-1) edge node {$F(x \leq y)$} (m-1-2)
			(m-1-1) edge node[swap] {$\phi(x)$} (m-2-1)
			(m-2-1) edge node {$G(x \leq y)$} (m-2-2)
			(m-1-2) edge node {$\phi(y)$} (m-2-2);
		
\end{tikzpicture}

\end{center}
With these morphisms, ${\mathcal{D}}^P$  is a category.  Equivalently, one could regard the poset $P$ as a thin category.  Then, a generalized persistence module will correspond to a covariant functor from $P$ to $\mathcal{D}$, and morphisms in ${\mathcal{D}}^P$ will be natural transformations.  When $P$ is $(0,\infty)$ or $\mathbb{R}$, we say that the elements of ${\mathcal{D}}^P$ are one-dimesional persistence modules.  In this paper, $\mathcal{D}$ will always be  $Simp$ or $K$-mod.

\subsection{Representation Theory of Algebras}
\label{algebra}
In this subsection we give a brief summary of $K$-algebras and their representations (modules).  For a more expansive introduction, see \cite{auslander}, \cite{benson1}, \cite{benson2}.  Throughout, let $K$ denote a field.  If $R$ is a $K$-algebra, by an $R$-module, we mean a finite-dimensional, unital, left $R$-module.  The category $R$-mod consists of $R$-modules together with $R$-module homomorphisms.

Recall that an $R$-module $M$ is \emph{indecomposable} if it is not isomorphic to a direct sum of two of its proper submodules.  The category $R$-mod is an abelian Krull-Schmidt category.  That is, every module can be written as a direct sum of indecomposable modules in a unique way up to order and isomorphism.  Moreover, the decomposition of modules is compatible with respect to homomorphisms in the following sense.

\begin{prop}
Let $R$ be a $K$-algebra, and let $M, N$ be $R$-modules.  Say, $M \cong \oplus M_i$ and $N \cong \oplus N_j$.  Then, as vector spaces,
$$\textrm{Hom}(M,N) \cong \bigoplus\limits_{i,j} \textrm{Hom}(M_i,N_j). $$
\end{prop}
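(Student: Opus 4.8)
The plan is to reduce to the case of internal direct sums and then exploit the structure maps of a finite biproduct. Since $\Hom(-,-)$ carries an isomorphism in either variable to a $K$-linear isomorphism (by pre- and post-composition), we may fix the given isomorphisms $M \cong \bigoplus_{i} M_i$ and $N \cong \bigoplus_{j} N_j$ and assume outright that $M = \bigoplus_{i=1}^{m} M_i$ and $N = \bigoplus_{j=1}^{n} N_j$ are honest finite direct sums. Then we have the canonical inclusions $\iota_i \colon M_i \hookrightarrow M$ and projections $p_i \colon M \to M_i$, which are $R$-module homomorphisms satisfying $p_i \circ \iota_i = \Id_{M_i}$, $p_k \circ \iota_i = 0$ for $k \neq i$, and $\sum_{i=1}^{m} \iota_i \circ p_i = \Id_M$; write $\iota'_j, p'_j$ for the corresponding maps attached to the decomposition of $N$.

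Next I would write down the two candidate inverse maps explicitly. Define
$$\Phi \colon \Hom(M,N) \longrightarrow \bigoplus_{i,j} \Hom(M_i,N_j), \qquad f \longmapsto \bigl( p'_j \circ f \circ \iota_i \bigr)_{i,j},$$
and
$$\Psi \colon \bigoplus_{i,j} \Hom(M_i,N_j) \longrightarrow \Hom(M,N), \qquad (g_{ij})_{i,j} \longmapsto \sum_{i,j} \iota'_j \circ g_{ij} \circ p_i.$$
Both are well defined because the index sets are finite (so the sum defining $\Psi$ is a genuine $R$-module homomorphism), and both are $K$-linear since composition of morphisms is $K$-bilinear.

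The verification that $\Phi$ and $\Psi$ are mutually inverse is then a direct computation using the relations above. For $\Phi \circ \Psi$, the $(k,l)$-component of $\Psi((g_{ij}))$ after applying $\Phi$ is $p'_l \circ \bigl( \sum_{i,j} \iota'_j \circ g_{ij} \circ p_i \bigr) \circ \iota_k = \sum_{i,j} (p'_l \circ \iota'_j) \circ g_{ij} \circ (p_i \circ \iota_k) = g_{kl}$, since only the terms with $j=l$ and $i=k$ survive. For $\Psi \circ \Phi$, applying $\Psi$ to $\Phi(f)$ gives $\sum_{i,j} \iota'_j \circ (p'_j \circ f \circ \iota_i) \circ p_i = \bigl( \sum_j \iota'_j \circ p'_j \bigr) \circ f \circ \bigl( \sum_i \iota_i \circ p_i \bigr) = \Id_N \circ f \circ \Id_M = f$. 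Hence $\Phi$ is a $K$-linear isomorphism with inverse $\Psi$, which is the asserted identification.

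There is no serious obstacle here: the only points requiring a little care are the initial reduction to internal direct sums (so that the biproduct structure maps are literally available) and the finiteness of the decompositions, which is exactly what guarantees that the partition-of-unity identity $\sum_i \iota_i \circ p_i = \Id_M$ holds and that $\Psi$ lands in $\Hom(M,N)$; everything after that is formal.
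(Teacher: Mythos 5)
Your proof is correct and complete. The paper itself states this proposition without proof, treating it as standard background from the representation theory of algebras (it remarks only that the isomorphism records the factorization of any $f:M\to N$ into a matrix of component homomorphisms $f^i_j: M_i \to N_j$, which is exactly what your maps $\Phi$ and $\Psi$ encode). Your argument is the standard biproduct verification: the explicit maps $f \mapsto (p'_j \circ f \circ \iota_i)$ and $(g_{ij}) \mapsto \sum_{i,j} \iota'_j \circ g_{ij} \circ p_i$ are mutually inverse by the orthogonality relations $p'_l \iota'_j = \delta_{lj}$, $p_i \iota_k = \delta_{ik}$, and the partition-of-unity identities $\sum \iota_i p_i = \Id_M$, $\sum \iota'_j p'_j = \Id_N$. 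Your remark that finiteness of the index sets is needed both for $\Psi$ to be well defined and for the identity $\sum_i \iota_i p_i = \Id_M$ to hold is the right caution; it is automatically satisfied here since the paper works with finite-dimensional modules, where Krull--Schmidt gives finite decompositions. The initial reduction to internal direct sums via naturality of $\Hom$ in each variable is also sound. There is nothing to add or correct.
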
 

This says that any module homomorphism can be $f:M \to N$ can be factored into a matrix of module homorphisms $f^i_j : M_i \to N_j$.

\subsubsection{Bound Quivers and their Representations}

\begin{definition}
A quiver $Q = (Q_0,Q_1, t, h)$ is an ordered tuple, where $Q_0, Q_1$ are disjoint sets, and $t, h : Q_1 \to Q_0$.
\end{definition}

We call elements of $Q_0$ vertices, and elements of $Q_1$ arrows.  The functions $t$ and $h$ denote the tail (start) and head (end) of the arrows.  Thus, clearly $Q$ is exactly a directed set.  We will always suppose the sets $Q_0, Q_1$ are finite.  

\begin{ex} 
\label{quivers}
Below are two quivers.

\begin{figure}[H]
\centering
\begin{subfigure}[b]{0.5\textwidth}
\label{quiver1}
\begin{tikzpicture}[normal line/.style={->},shorten >=1pt,font={\it\small},node distance=2cm,main node/.style={circle,scale=.5,fill=blue!10,draw,font=\sffamily\small\bfseries}]
 
  \node (1) {$\bullet_1$};
  \node (2) [right of=1] {$\bullet_2$};
  \node (3) [right of=2] {$\bullet_3$};
  \node (4) [above right of=3] {$\bullet_4$};
  \node (5) [below right of=3] {$\bullet_5$};

  \path[normal line]
    (1) edge node [above] {a} (2) 
    (2) edge node [above] {b} (3)
    (3) edge node [above] {c} (4)
    (3) edge node [above] {d} (5);

\end{tikzpicture}
\caption*{A}

\end{subfigure}
~
\begin{subfigure}[b]{0.5\textwidth}
\label{quiver2}
\centering
\begin{tikzpicture}[normal line/.style={->},shorten >=1pt,font={\it\small},node distance=3cm,main node/.style={circle,scale=.5,fill=blue!10,draw,font=\sffamily\small\bfseries}]
 
  \node (1) {$\bullet_1$};
  \node (2) [above right of=1] {$\bullet_2$};
  \node (3) [right of=1] {$\bullet_3$};
  \node (4) [right of=3] {$\bullet_4$};

  \path[normal line]
  	(1) edge node [right] {a} (2) 
	(2) edge [bend right=30] node [right] {c} (3)
	(3) edge node [below right] {b} (1)
	(3) edge [bend right=45] node [right] {d} (2)
	(3) edge [bend right=90] node [below] {e} (4)
	(3) edge [bend right] node [above] {f} (4)
	(4) edge [loop] node [above] {g} (4);

\end{tikzpicture}
\caption*{B}
\label{quiver2}

\end{subfigure}
\end{figure}
\noindent
Quiver A corresponds to $Q_0 = \{1,2,3,4,5\}, Q_1 = \{a,b,c,d\}$, for an appropriate choice of the functions $h, t$.  Similarly, quiver B corresponds to the sets $Q_0 = \{1, 2, 3, 4\}$, and $Q_1 = \{a, b, c, d, e, f, g\}$. 
\end{ex}
\begin{definition}
A path is a sequence of arrows $p = a_1 ... a_n$ where $t(a_i) = h(a_{i+1})$.  The length of the path is the number of terms in the sequence $p$.  In addition, at each vertex $i$ there is a "lazy" path $e_i$ of length $0$ at the vertex $i$.  We extend the functions $h, t$ to paths, by defining $t(p) = t(a_n)$ and $h(p) = h(a_1)$.   In addition, $t(e_i) = h(e_i) = i$.  An oriented cycle is a path $p$ of length greater than or equal to one with $t(p) = h(p)$.  
\end{definition}
Consider quiver B in Example \ref{quivers}.  Then $g$ and $cabcd$ are oriented cycles, while $gg f$ is a path which is not an oriented cycle.  Quiver A has no oriented cycles.

\begin{definition}
A representation $V$ of a quiver $Q$ is a family $V = (\{V(i)\}_{i \in Q_0}, \{V(a)\}_{a \in Q_1})$, where $V(i)$ is a $K$-vector space for every $i \in Q_0$, and $V(a) : V(t(a)) \to V(h(a))$ is a $K$- linear map for every $a \in Q_1$.
\end{definition}

For a fixed quiver $Q$ and field $K$, the collection of all representations of $Q$ is a category with morphisms given below.
\begin{definition}
Let $Q$ be a quiver, and let $V$, $W$ be representations of $Q$.  A morphism from $V$ to $W$, $\phi: V \to W$ is a collection of linear maps $\{\phi(i)\}_{i \in Q_0}$ with $\phi(i):V(i) \to W(i)$ such that the diagam below commutes for all $a \in Q_1$

\begin{center}

\begin{tikzpicture}[commutative diagrams/every diagram]
	\matrix[matrix of math nodes, name=m, commutative diagrams/every cell,row sep=.7cm,column sep=1cm] {
	 \pgftransformscale{0.2}
		V(t(a)) & V(h(a))\\
		W(t(a)) & W(h(a))\\ };
		
		\path[commutative diagrams/.cd, every arrow, every label]
			(m-1-1) edge node {$V(a)$} (m-1-2)
			(m-1-1) edge node[swap] {$\phi(t(a))$} (m-2-1)
			(m-2-1) edge node[swap] {$V(a)$} (m-2-2)
			(m-1-2) edge node {$\phi(h(a))$} (m-2-2);
		
\end{tikzpicture}

\end{center}

\end{definition}

\medskip
We denote by $Rep(Q)$, the category of $K$-representations of the quiver $Q$.  When $\phi: V \to W$ is a morphsim from $V$ to $W$ and $\phi(i)$ is invertible for all $i$, then we say $\phi$ is an isomorphism.  If this is the case, we say that $V$ and $W$ are isomorphic. 
\begin{definition}
\label{support}
If $V$ is a representation of a quiver $Q$ we say the support of $V$ is the set of all vertices $i \in Q_0$, such that $V(i)$ is not the zero vector space.
\end{definition}

More generally, the \emph{dimension vector} of $V$ is the non-negative integer vector $(dim_K(V(i)))$.  Viewing the dimension vector of $V$ as a function from $Q_0$ to the non-negative integers, the support of $V$ is exactly the support of this function.

\begin{definition}
Let $Q$ be a quiver.  The path algebra $KQ$ is the $K$-vector space with basis consisting of all paths (including those of length zero).  We define multiplication in $KQ$ as the $K$-linear extension of concatenation of paths.  
\end{definition}

That is, if $p, q$ are paths, then $p \cdot q = pq$, if $pq$ is a path, and zero otherwise.  If $t(p) = a, h(p) = b$, we define $p e_a = p = e_b p$.  By extending $K$-linearly, we obtain a ring structure on $KQ$.  It is easy to see that $KQ$ is finite-dimensional if and only if $Q$ has no oriented cycles.  

The (two-sided) ideal $J$ in $KQ$ generated by the arrows is the radical of the  ring $KQ$.  For $n \in N$, let $J^n$ denote the $n$th power of the radical $J$.  When $Q$ has no oriented cycles, $J$ is a nilpotent ideal.  We say an ideal $I$ is admissible if $J^n \subseteq I \subseteq J^2$, for some $n$.  The elements of $I$ are called \emph{relations}.  If $Q$ is a quiver, and $I$ is an admissible ideal, we say $(Q,I)$ is a \emph{bound quiver}.

\begin{definition}
Let $(Q,I)$ be a bound quiver.  Then, $Rep(Q,I)$ denotes the collection of all representations $V$ in $Rep(Q)$ satisfying all the relations in $I$. 
\end{definition}
Then, $Rep(Q,I)$ with morphisms in $Rep(Q)$ forms a category.

\begin{prop}
\label{quiver equivalence}
\label{bound quiver equivalence}
Let $(Q,I)$ be a bound quiver. Then, there exists a natural equivalence between $Rep(Q)$ and $KQ$-mod, that restricts to $Rep(Q,I)$ and $KQ/I$.
\end{prop}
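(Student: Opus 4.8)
The plan is to exhibit an explicit pair of mutually quasi-inverse functors $\Phi : Rep(Q) \to KQ\text{-mod}$ and $\Psi : KQ\text{-mod} \to Rep(Q)$, and then to observe that they restrict to the bound situation. First I would build $\Phi$. Given a representation $V$, set $\Phi(V) = \bigoplus_{i \in Q_0} V(i)$ as a $K$-vector space, and define the action of a path on $\Phi(V)$: the lazy path $e_i$ acts as the projection onto the summand $V(i)$ (followed by its inclusion), an arrow $a$ acts as $V(a)$ precomposed with the projection onto $V(t(a))$ and postcomposed with the inclusion of $V(h(a))$, and a longer path $p = a_1 \cdots a_n$ acts by the composite $V(a_1) \circ \cdots \circ V(a_n)$ between the appropriate summands (which is zero unless heads and tails match, consistently with how products of paths vanish in $KQ$). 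Extending $K$-linearly gives a ring homomorphism $KQ \to \End_K(\Phi(V))$; checking that it respects the decomposition $\sum_i e_i = 1$ into orthogonal idempotents and concatenation of paths is the routine verification that $\Phi(V)$ is a well-defined $KQ$-module. On morphisms, a natural transformation $\phi : V \to W$ assembles to $\bigoplus_i \phi(i) : \Phi(V) \to \Phi(W)$, and the commuting squares in the definition of a morphism of representations say precisely that this map is $KQ$-linear.

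Next I would build $\Psi$. Given a $KQ$-module $M$, the lazy paths $\{e_i\}$ form a complete set of orthogonal idempotents, so $M = \bigoplus_{i \in Q_0} e_i M$; set $\Psi(M)(i) = e_i M$, and for an arrow $a : i \to j$ let $\Psi(M)(a) : e_i M \to e_j M$ be left multiplication by $a$ (this lands in $e_j M$ because $a = e_j a e_i$ in $KQ$). A $KQ$-linear map $f : M \to N$ commutes with the $e_i$, hence restricts to maps $e_i M \to e_i N$ which together form a morphism of representations. One then checks that $\Phi\Psi$ and $\Psi\Phi$ are naturally isomorphic to the respective identity functors: $\Psi\Phi(V)(i) = e_i \Phi(V)$ is canonically $V(i)$ with the same structure maps, and $\Phi\Psi(M) = \bigoplus_i e_i M \cong M$ via the canonical decomposition, naturally in $M$.

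For the bound version, recall that $KQ/I\text{-mod}$ is the full subcategory of $KQ\text{-mod}$ consisting of modules annihilated by $I$. A representation $V$ lies in $Rep(Q,I)$ precisely when, for every relation $\rho = \sum_k c_k p_k \in I$, the linear map $\sum_k c_k V(p_k)$ vanishes, and by construction of $\Phi$ this is exactly the condition that $\rho$ annihilates $\Phi(V)$. Since $I$ is a two-sided ideal generated by such elements, $V \in Rep(Q,I)$ iff $I \cdot \Phi(V) = 0$ iff $\Phi(V)$ is a $KQ/I$-module; running the same equivalence through $\Psi$ shows $\Psi$ carries $KQ/I$-modules into $Rep(Q,I)$. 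Hence $\Phi$ and $\Psi$ restrict to a natural equivalence $Rep(Q,I) \simeq KQ/I\text{-mod}$.

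I expect the only genuine obstacle to be bookkeeping rather than mathematics: one must be careful that $p \mapsto (\text{action of } p)$ is multiplicative with the convention that a product of paths is $0$ when heads and tails fail to match, and that every isomorphism in sight is genuinely natural. Once the idempotent decomposition $1 = \sum_i e_i$ is in hand, the rest is essentially forced.
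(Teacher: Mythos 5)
Your proof is correct and is the standard textbook argument (the idempotent decomposition $1 = \sum_i e_i$ in $KQ$ giving $\Psi$, and the direct-sum construction giving $\Phi$; see e.g. Assem--Simson--Skowro\'nski, Theorem III.1.6). The paper itself does not prove this proposition: it is stated as known background, implicitly citing the general references \cite{auslander}, \cite{benson1}, \cite{benson2} given at the top of the subsection, so there is no in-paper argument to compare against. Two small points worth making explicit when you write it up, given the paper's standing conventions: $Q_0$ is assumed finite, which is what makes $\{e_i\}_{i \in Q_0}$ a \emph{complete} set of orthogonal idempotents and keeps $\Phi(V) = \bigoplus_i V(i)$ finite-dimensional when each $V(i)$ is; and the paper reads paths right-to-left (so $p = a_1 \cdots a_n$ has $t(p) = t(a_n)$, $h(p) = h(a_1)$), which matches your assignment $p \mapsto V(a_1) \circ \cdots \circ V(a_n)$, so the conventions are consistent.
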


Gabriel proved that when $K$ is algebraically closed any finite-dimensional $K$-algebra is (Morita equivalent to) an algebra of the form $KQ/I$.  Thus,  up to equivalence, the study of the module category of $K$-algebras (when $K$ is algebraically closed) \emph{is} the study of representations of bound quivers.

\subsubsection{Poset Algebras}

We will now define the algebra whose module theory is equivalent to generalized persistence modules with values in $K$-mod.  This will be $A(P)$, the poset algebra (or incidence algebra) of the poset $P$.

\begin{definition}
Let $P$ be a finite poset.  Let $Q_P$ be the quiver with $Q_0 = P$.  There is an $a \in Q_1$ with $t(a) = x, h(a) = y$ if,
\begin{enumerate}[(i.)]
\item $x < y$, and 
\item there is no $t \in P$, with $x < t < y$.
\end{enumerate}
\end{definition}

The quiver $Q_P$ is called the Hasse quiver of the poset $P$.  The Hasse quiver of $P$ is exactly the lattice of the poset with arrows corresponding to minimal proper relations.

\begin{ex}
\label{Hasse}

The quivers below are the Hasse quivers for three finite posets.
\begin{center}
\includegraphics[scale=2]{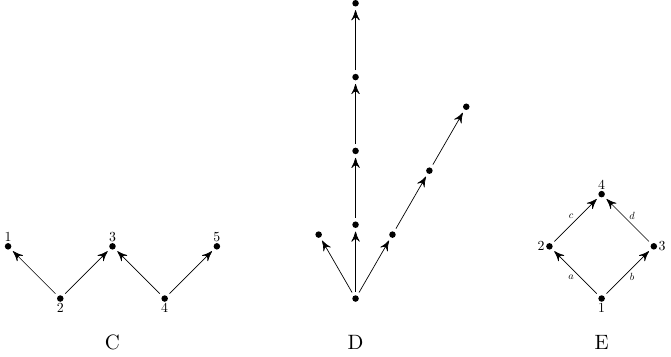}
\end{center}
Quiver A in Example \ref{quivers} is also the Hasse quiver of a poset.

\end{ex}

Note that if $Q_P$ is the Hasse quiver for $P$, and there is an arrow going from one vertex to another, it is necessarily unique.  Because no ambiguity is possible, we may draw the Hasse quiver of a poset with arrows unlabeled.  A finite quiver $Q$ is the Hasse quiver of a poset if and only if has no oriented cycles, and for all arrows $p$, if $q$ is a path of length greater than or equal to one with $t(p) = t(q)$ and $h(p) = h(q)$, then $p = q$.  Note that $x \leq y$ in $P$ if and only if there is a path $q$ in the Hasse quiver with $t(q) = x$ and $h(q) = y$.

\begin{definition}
Let $KQ_P$ denote the Hasse quiver of the poset $P$.  Then, the parallel ideal $I_P$ is the two-sided ideal in $KQ_P$ generated by all the relations equating any two paths $p, q$ in $Q_P$ satisfying $t(q) = t(p)$ and $h(q) = h(p)$.  
\end{definition}

For example, the poset in Example \ref{Hasse} E has parallel ideal generated by the element $ca-db$.  The Hasse quiver for C and D have trivial parallel ideals.

\begin{definition}
The poset algebra $A(P)$ is the bound quiver algebra
$$A(P) = KQ_P/I_P .$$
\end{definition}

By the equivalence in Proposition \ref{bound quiver equivalence}, we now see that the generalized persistence modules for a finite set $P$ with values in $K$-mod are the same as the modules for the poset algebra $A(P)$.  This is because $Rep(Q_P,I_P)$ corresponds exactly to the definition of generalized persistence modules for $\mathcal{D}=K$-mod, where the commutativity of the triangle below corresponds to the statment that $M$ satisfies all relations in $I_P$.

\begin{center}
\begin{tikzpicture}[commutative diagrams/every diagram]
	\matrix[matrix of math nodes, name=m, commutative diagrams/every cell,row sep=.7cm,column sep=.45cm] {
	 \pgftransformscale{0.2}
		M(x) & & M(z) \\
		& M(y) & \\ };
		
		\path[commutative diagrams/.cd, every arrow, every label]
			(m-1-1) edge node {$M(x \leq z)$} (m-1-3)
			(m-1-1) edge node[swap] {$M(x \leq y)$} (m-2-2)
			(m-2-2) edge node[swap] {$M(y \leq z)$} (m-1-3);

\end{tikzpicture}
\end{center}

Thus, from this point forward, we pass freely between generalized persistence modules and modules for the corresponding poset algebra.
\subsubsection{Representation Type}

For this subsubsection only, let $K$ be algebraically closed.  Informally, an arbitrary $K$-algebra $A$ is said to be of wild representation type if it's module category contains a copy of the module category of \emph{all} finite dimensional $K$-algebras.  Rather surprisingly, this happens frequently.

\begin{ex}
\label{3-Vee}
Poset D in Example \ref{Hasse} is a poset whose algebra is of wild representation type.
\end{ex}

When $A$ is of wild representation type, the classification of its modules up to isomorphism is hopeless.  In contrast, the module category for $A$ may be of finite type, or of tame type.  Finite representation type means that there are a finite number of isomorphism classes of indecomposable modules (like $A(P_n)$).  Informally, if $A$ has tame representation type there are infinitely many isomorphism classes of indecomposable $A$-modules (though they are parametrized reasonably).  It has been shown that every algebra is either finite, tame or wild.  In particular, complete lists of poset algebras of finite representation type are known (see \cite{loupias}, \cite{drozdowski}).  The  posets that arise when one discretizes generalized persistence modules for $P ={\mathbb{R}}^n$ are typically wild.

\subsection{Interleaving Metrics on $P$ and $P^+$}

We begin with the construction of the interleaving metric of Bubenik, de Silva and Scott (see \cite{bubenik}).
\begin{definition}
Let $P$ we a finite poset and $\mathcal{T}(P^-)$ be the collection of endomorphisms of the poset $P$ with the additional property that $\Lambda p \geq p$ for all $p \in P$.  We call the elements of $\mathcal{T}(P^-)$  translations.  
\end{definition}

Explicitly, a function $\Lambda: P \to P$ is an element of $\mathcal{T}(P^-)$ if and only if
$$x \leq y \implies \Lambda x \leq \Lambda y \textrm{, and }
\Lambda p \geq p\textrm{, for all } p \in P.$$

It is easy to see that the set $\mathcal{T}(P^-)$ is itself a poset under the relation $\Lambda \leq \Gamma$ if for all $p \in P$, $\Lambda p \leq \Gamma p$. Moreover $\mathcal{T}(P^-)$ is totally ordered if and only if $P$ is totally ordered, and $\mathcal{T}(P^-)$ is a monoid under functional composition.  Let $d$ be any metric on a finite poset $P$, we define a height function $h = h(d)$ on $\mathcal{T}(P^-)$.

\begin{definition}
For $\Lambda \in \mathcal{T}(P^-)$ set $h(\Lambda) = \textrm{sup}\{d(x,\Lambda x): x \in P \}$
\end{definition}

Of course, since $P$ is finite, we may replace supremum with maximum.  Proceeding as in \cite{bubenik}, let $\mathcal{D}$ be any category.  Then, $\mathcal{T}(P^-)$ acts on $\mathcal{D}^P$ on the right by the formulae
$$(F \cdot \Gamma)(p) = F(\Gamma p), \textrm{ and } (F \cdot \Gamma)(p \leq q) =F(\Gamma p \leq \Gamma q)\textrm{, for }\Gamma \in \mathcal{T}(P^-), F \in \mathcal{D}^P.$$
Similarly, $\mathcal{T}(P^-)$ acts on morphisms in $\mathcal{D}^P$, by acting inside the argument.
\begin{definition}
Let $F,G\in\mathcal{D}^P$ and let $\Gamma, \Lambda$ be translations on $P$. A $(\Gamma,\Lambda)$-interleaving between $F$ and $G$ is a pair of morphisms in $\mathcal{D}^P$, $\phi:F\to G\Lambda,\,\,\,\,\psi:G\to F\Gamma$ such that the following diagrams commute:

\begin{center}

\begin{tikzpicture}[commutative diagrams/every diagram]
	\matrix[matrix of math nodes, name=m, commutative diagrams/every cell,row sep=.7cm,column sep=.45cm] {
	 \pgftransformscale{0.2}
		F & & F\Gamma\Lambda  & & F\Gamma & \\
		& G\Lambda  & & G & & G\Lambda \Gamma\\ };
		
		\path[commutative diagrams/.cd, every arrow, every label]
			(m-1-1) edge node {} (m-1-3)
			(m-1-1) edge node[swap] {$\phi$} (m-2-2)
			(m-2-2) edge node[swap] {$\psi \Lambda $} (m-1-3)
			(m-2-4) edge node {$\psi$} (m-1-5)
			(m-2-4) edge node[swap] {} (m-2-6)
			(m-1-5) edge node {$\phi \Gamma$} (m-2-6);
					
\end{tikzpicture}

\end{center}

\end{definition}
The two horizontal maps in the diagram above are given by the formulae:
$$\textrm{for all }p \in P, F( p \leq \Gamma\Lambda p) \textrm{, and } G(p \leq \Lambda \Gamma p) \textrm{ respectively.}  $$
Note that two persistence modules are $(1,1)$-interleaved, where $1$ is identity translation, if and only if they are isomorphic.  
\begin{definition}[\cite{bubenik}]
Given any metric $d$ on $P$, we define $D = D(d)$ by the formula;
\begin{eqnarray*}
&&D(M,N) :=\textrm{inf} \{ \epsilon : \exists (\Gamma,\Lambda) \textrm{-interleaving with } {\textrm{sup}}_{p \in P}d(p,\Gamma p), {\textrm{sup}}_{p \in P}d(p,\Lambda p) \leq \epsilon \}\\
&&=\textrm{inf}\{\epsilon: \exists (\Gamma,\Lambda) \textrm{-interleaving with } h(\Lambda), h(\Gamma) \leq \epsilon\}.
\end{eqnarray*}
\end{definition}

From Bubenik, de Silva and Scott (\cite{bubenik}), we know that $D$ is a Lawvere metric on ${\mathcal{D}}^P$, and for any category $\mathcal{F}$, and functor $R: \mathcal{D} \to \mathcal{F}$, post-composition by $R$ is a contraction from ${\mathcal{D}}^P$ to ${\mathcal{F}}^P$.

With hard stability theorems in mind, the fact that post-composition by any functor induces a contraction is particularly noteworthy.  Still, independent of the choice of metric $d$ on $P$, without modification the resulting Lawvere metric $D=D(d)$ need not be a proper metric, simply because the collection of translations is not be rich enough to provide interleavings between arbitrary generalized persistence modules.  This is unfortunate, since $\mathcal{T}(P^-)$ is defined naturally for any poset $P$.  The failure comes from the fact that finite posets will always have fixed points.  
\begin{definition}
We say that $p \in P$ is a fixed point of $P$, if $\Lambda p = p$ for all $\Lambda$ in $\mathcal{T}(P^-)$  
\end{definition}
\begin{rmk}
\label{fixed}
Note that if $p_1, p_2 ... p_n$ are maximal elements in $P$, then any maximal element in $\bigcap (-\infty, p_i ] $ is necessarily a fixed point of $P$.  This is relevant because one can easily show that if $M$, $N$ are two $A(P)$ modules, and $dim_K (M(p)) \neq dim_K (N(p)) $ for some fixed point $p \in P$, then $D(M,N) = \infty$, where $D = D(d)$, and $d$ is any metric on $P$.
\end{rmk}

If $p$ is a fixed point of $P$ and $dim_K (M(p)) < dim_K (N(p)) $.  Then the diagram below does not commute for any morphisms $\phi, \psi$ and any translations $\Lambda, \Gamma$, since the composition cannot have full rank as required.  Thus, $D(M,N) = \infty$.

\begin{center}
\begin{tikzpicture}[commutative diagrams/every diagram]
	\matrix[matrix of math nodes, name=m, commutative diagrams/every cell,row sep=.7cm,column sep=.45cm] {
	 \pgftransformscale{0.2}
		N(p) & & N(\Gamma\Lambda p)=N(p) \\
		& M(\Lambda p)= M(p) & \\ };
		
		\path[commutative diagrams/.cd, every arrow, every label]
			(m-1-1) edge node {$N(p\leq \Gamma\Lambda p)= Id_{N(p)}$} (m-1-3)
			(m-1-1) edge node[swap] {$\phi_p$} (m-2-2)
			(m-2-2) edge node[swap] {$\psi_{\Lambda p}=\psi_p$} (m-1-3);

\end{tikzpicture}
\end{center}

In particular, this says that if $p$ is a fixed point of $P$, with $p \in \textrm{Supp}(M), p \notin \textrm{Supp}(N)$, then $D(M,N) = \infty$.  Because of this, there is no hope of realizing any honest metric as an interleaving metric on any finite poset.  For example, for poset C of Example \ref{Hasse}, and for any choice of metric $d$, the resulting interleaving metric (on isomorphism classes of modules) is the infinite discrete Lawvere metric.

With this in mind, we make the following modification.  We set $P^+ = P \cup \{\infty \}$ with added relations $p \leq \infty$, for all $p \in P$.  We may now view $A(P)$-mod as the full subcategory of $A(P^+)$-modules where all objects are supported in $P$.  Now there exist ($P^+$) interleavings between any two $A(P)$-modules.  Note that the Hasse quiver for $P^+$ is simply the Hasse quiver for $P$ with added edges connecting maximal elements of $P$ to $\infty$.  We now build the metric $d$, attaching positive weights to each edge of the Hasse quiver of $P^+$.  Continuing with poset C from Example \ref{Hasse}, we now have one of the below.


\begin{figure}[H]
\centering
\begin{subfigure}[b]{0.5\textwidth}
\label{quiver1}
\centering
\includegraphics[scale=2.25]{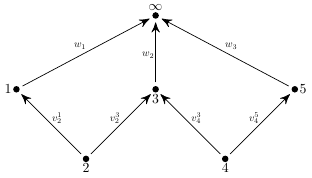} 

\caption*{general choice of weights}

\end{subfigure}
~
\begin{subfigure}[b]{0.5\textwidth}
\label{quiver2}
\centering
\includegraphics[scale=2.25]{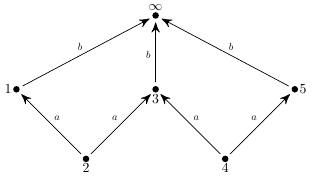}
\caption*{so-called democratic choice of weights}
\label{quiver2}

\end{subfigure}
\end{figure}


In the democratic case (on the right), the arrows in the Hasse quiver of $P^+$ which were actually in Hasse quiver for $P$ are labeled with one weight, while the "new" arrows are all labeled with a different value.  Of particular interest is when $(a,b) \in \mathbb{N} \times \mathbb{N}$ (see Remark \ref{weights} below).  We will confine our attention to this case in this paper (For an analysis of the non-democratic case, see \cite{meehan_meyer_2}).  When the Hasse quiver for $P^+$ is as above, we will say that $(a,b)$ is a weight.
\begin{definition}
Now, we let $d_{a,b}$ denote the weighted graph metric on the Hasse quiver of $P^+$, and let $\mathcal{D}$ be a category.  Then, $D = D(d_{a,b})$ is the interleaving metric corresponding to the weight $(a,b)$ on  ${\mathcal{D}}^P$.
\end{definition} 
With this modification, since any two generalized persistence modules can be interleaved, $D$ defines the structure of a finite metric space on the isomorphism classes of elements of ${\mathcal{D}}^P$.  We will now write $\mathcal{T}(P)$ for $\mathcal{T}((P^+)^-)$, and from this point forward, we suspend all posets at infinity.

\begin{rmk}
\label{weights}
Ultimately, we wish to consider a function defined on a full subcategory of isomorphism classes of $A(P)$-modules equipped with the interleaving distance $D=D(d_{a,b}).$  Of course, if the function $J$ in our workflow diagram takes values in a metric space $X$ with finite diameter, one can always choose $(a,b) = (\textrm{diam}(X), \textrm{diam}(X))$ to make the function a contraction.  Thus, in future work, we endow the weightspace $\mathbb{N} \times \mathbb{N}$ with the lexicographic ordering, and will consider \emph{minimal} weights $(a,b)$ such that the function in question is a contraction for $D=D(d_{a,b})$.
\end{rmk}

When the category $\mathcal{D}$ is $K$-modules, $D=D(d_{a,b})$ will be our interleaving distance on the category $\mathcal{C} \subseteq {\mathcal{D}}^P \cong A(P)$-mod.  We now endow the set of isomorphism classes of $A(P)$-modules with the other metric structure.

\subsection{Bottleneck Metrics}
\label{sec bottle}

A bottleneck metric provides an alternate metric structure on the set of isomorphsim classes of $A(P)$-modules, or indeed any subcategory $\mathcal{C}$ generated by a fixed collection of indecomposable modules.  The construction begins with a metric $d_2$ on a set $\Sigma$ , where $\Sigma$ is a subset of isomorphism classes of indecomposable $A(P)$-modules.  Additionally, we require a function $W:{\Sigma} \to (0,\infty)$, compatible with $d_2$ in the sense that for all $\sigma_1, \sigma_2 \in \Sigma$, 
$$|W(\sigma_1) - W(\sigma_2)| \leq d_2(\sigma_1,\sigma_2).$$ 

Following \cite{induced_matchings}, \cite{zigzag},  we define a matching between two multisets $S, T$ of ${\Sigma}$ to be a bijection $f:S' \to T'$ between multisubsets $S' \subseteq S$ and $T' \subseteq T$.  For $\epsilon \in (0,\infty)$, we say a matching $f$ is an $\epsilon$-matching if the following conditions hold;
\begin{enumerate}[(i)]
\item $\textrm{for all }s \in S, W(s) > \epsilon \implies s \in S'$ 
\item $\textrm{for all }t \in T, W(t) > \epsilon \implies t \in T' $, and
\item $d_2(s,f(s) ) \leq \epsilon$, for all $s \in S$.
\end{enumerate}

Since, intuitively $W$ measures the \emph{size} of an element of ${\Sigma}$, we call $W(\sigma)$ the \emph{width} of sigma.  Thus, in an $\epsilon$-matching, elements of $S$ and $T$ which are actually identified are within $\epsilon$, while all those not identified have width at most $\epsilon$.  

Given a $A(P)$-module $M$, the barcode of $M$, $B(M)$ is the multiset of the isomorphism classes of indecomposable summands of $M$ with their corresponding multiplicities.  Thus, $B(M)$ is precisely a multiset of elements in $\Sigma $, when $\Sigma$ is the set of all isomorphism classes of indecomposable modules.
\begin{definition}
Let $S, T$ be two finite multisubsets of any set $\Sigma$.  Suppose $d_2$ and $W$ are compatible.  Then the bottleneck distance between $S$ and $T$ is defined by, 
$$D_B(S,T) = \textrm{inf} \{\epsilon \in \mathbb{R}: \textrm{there exists and }\epsilon \textrm{-matching between  }S, T\} $$

\end{definition}
Let $\Sigma$ be any fixed subset of isomorphism classes of indecomposable $A(P)$-modules.  If $M, N$ are $A(P)$-modules with the property that every indecomposable summand of $M$ or $ N$ is isomorphic to an element of $\Sigma$, then we may identify $M, N$ with their barcodes $B(M), B(N)$, two multisubsets of $\Sigma$.  Then, set 
$$ D_B(M,N) := D_B(B(M),B(N)).$$

While there are many examples of bottleneck metrics in the literature, in this paper, we will choose $d_2$ to be the interleaving metric corresponding to the weight $(a,b)$ restricted to $\Sigma$, where $\Sigma$ is the set of convex modules.  Our width will be an algebraic analogue of the width of the support of a one-dimensional persistence module.  In the next subsection, we define our subcategory $\mathcal{C}$.

\subsection{The Category Generated by Convex Modules}
\label{subconvex}

Since a finite poset $P$ may have the property that $A(P)$-mod is of wild representation type, a characterization of all of the isomorphsim classes of its indecomposable modules might not be possible.  Moreover, an indecomposable module is not determined by its support (see Definition \ref{support}).  Let $\Omega$ denote the set of isomorphism classes of indecomposable $A(P)$-modules.  Clearly, the function 
$$\Omega \xrightarrow{Supp} \mathcal{P}(P)\textrm{ , which sends } M \xrightarrow{Supp} \textrm{Supp}(M)\textrm{, its support}$$

\noindent
may have infinite (and unknowable) domain, but always has finite range.  Motivated by one-dimensional persistent homology, we normalize taking the perspective that the width of an indecomposable should be determined only by its support.  We, therefore restrict out attention to the category $\mathcal{C}$ generated by an appropriate set $\Sigma$ of indecomposable thin modules.  A module is \emph{thin} if its dimension vector consists of only zeros and ones.  
\begin{definition}
\label{convex}
An indecomposable module $M$ is convex, if it thin, and if it is isomorphic to a module $M'$ where $M'$ satisfies
 $$\textrm{ for all } x,y \in \textrm{Supp}(M') \textrm{, with } x \leq y, \textrm{ the linear map } M'(x \leq y)\textrm{ is given by } Id_K.$$
\end{definition}

Let $\mathcal{C}$ be the full subcategory of $A(P)$-modules which are direct sums only of convex modules.  This is the full subcategory of $A(P)$-modules that we will focus on.  We note that in the literature, \emph{convex modules} are sometimes called \emph{interval modules} (see \cite{zigzag}).  We use convex instead to avoid confusion with either subsets of the poset  $P$, or elements of  its poset algebra $A(P)$.  In particular, some convex modules are supported in an honest interval in the poset, while others are not.

Clearly, when restricted to the set of isomorphism classes of convex modules, the function $M \to \textrm{Supp}(M)$ is one-to-one.  Of course, the function is not onto, as not every subset of $P$  is the support of a convex module.  One easily checks that if $S \subseteq P$, then there exists a convex module $M$ (unique up to isomorphism) with Supp$(M) = S$ if and only if 
\begin{enumerate}[(i)]
\item For all $s_1, s_2 \in S$ there exists an unoriented path in the Hasse quiver of $P$ that connects $s_1$ and $s_2$ staying entirely within $S$ , and
\item For all $s_1, s_2 \in S$ the set $\{p \in P: s_1 \leq p \leq s_2 \} = [s_1,s_2] \subseteq S$.
\end{enumerate}
In the above, an unoriented path is a product of paths and their formal inverses.  If $S$ satisfies (i), we say $S$ is \emph{connected}, and if $S$ satisfies (ii), we say $S$ is \emph{interval convex}.  Regardless of the representation type of the poset $P$, $\Sigma = \{[\sigma]: \sigma \textrm{ is convex }\}$ is finite.

It is well known that if $P$ has no \emph{crowns} (a subposet of a certain form), then every indecomposable thin $A(P)$-module is a convex module \cite{strongly_simply_connected}.  On the other hand, when $P$ has non-trivial cohomology, many indecomposable thin modules will not be convex (see Example \ref{cohomology} below from \cite{feinberg}).  While the class of posets we will restrict to in the next section contain many posets of wild representation type, they all have the property that every indecomposable thin is convex.

\begin{ex}
\label{cohomology}
Consider the posets given below.

\begin{figure}[H]
\centering
\begin{subfigure}[b]{0.5\textwidth}
\label{quiver1}
\centering
\begin{tikzpicture}[description/.style={fill=white,inner sep=2pt}]
\matrix (m) [matrix of math nodes, row sep=3em,
column sep=2.5em, text height=1.5ex, text depth=0.25ex]
{ {\bullet{}} & {\bullet{}} & {\bullet{}}\\ 
{\bullet{}} & {\bullet{}} & {\bullet{}}\\};
\node[above,scale=1] at (m-1-1) {$2$};
\node[above,scale=1] at (m-1-2) {$4$};
\node[above,scale=1] at (m-1-3) {$6$};
\node[below,scale=1] at (m-2-1) {$1$};
\node[below,scale=1] at (m-2-2) {$3$};
\node[below,scale=1] at (m-2-3) {$5$};
\path[->,font=\scriptsize]

(m-2-1) edge node[auto] {} (m-1-1)
(m-2-1) edge node[auto] {} (m-2-2)
(m-1-1) edge node[auto] {} (m-1-2)
(m-1-2) edge node[auto] {} (m-1-3)
(m-2-2) edge node[auto] {} (m-1-2)
(m-2-2) edge node[auto] {} (m-2-3)
(m-2-3) edge node[auto] {} (m-1-3);
\end{tikzpicture}

\caption*{F}

\end{subfigure}
~
\begin{subfigure}[b]{0.5\textwidth}
\label{quiver2}
\centering
\begin{tikzpicture}[description/.style={fill=white,inner sep=2pt}]
\matrix (m) [matrix of math nodes, row sep=3em,
column sep=2.5em, text height=1.5ex, text depth=0.25ex]
{ {\bullet{}} & \bullet{}\\ 
{\bullet{}} & \bullet{}\\};
\node[above,scale=1] at (m-1-1) {$3$};
\node[above,scale=1] at (m-1-2) {$4$};
\node[below,scale=1] at (m-2-1) {$1$};
\node[below,scale=1] at (m-2-2) {$2$};
\path[->,font=\scriptsize]

(m-2-1) edge node[auto] {} (m-1-1)
(m-2-1) edge node[auto] {} (m-1-2)
(m-2-2) edge node[auto] {} (m-1-1)
(m-2-2) edge node[auto] {} (m-1-2);
\end{tikzpicture}
\caption*{G}
\label{quiver2}

\end{subfigure}
\end{figure}

\noindent
The convex modules for the algebra with poset F have supports given by the following subsets;
\begin{eqnarray*}
&&\{ 1 \}, \{ 2 \}, \{ 3 \}, \{ 4 \}, \{1,2\} , \{1,3\}, \{2,4\}, \{3,4\}, \{3,5\}, \{4,6\}, \{5,6\}, \{1,2,3\}, \{1,3,5\},\{2,3,4\}, \\
&&\{2,4,6\}, \{3.4.5\},\{4,5,6\}, \{1,2,3,4\}, \{1,2,3,5\}, \{2,3,4,5\}, \{2,4,5,6\}, \{3,4,5,6\}, \{1,2,3,4,5\},\\
&& \{2,3,4,5,6\}, \{1,2,3,4,5,6\}.
\end{eqnarray*}
\noindent
Poset G has the property that when $K$ is infinite, there are infinitely many non-isomorphic indecomposable thin modules modules with full support.  Of course, there is exactly one convex module with full support.

\end{ex}
Convex modules are of interest in representation theory.  For example, in a large class of algebras, each simple modules can be associated to a collection (in fact, a poset) of convex modules in such a way that the representation type of the algebra can be determined in an effective fashion \cite{thin1}, \cite{thin2}, \cite{ringel}.  In persistent homology similar classes of generalized persistence moduels have frequently been used (see \cite{block}, \cite{zigzag}).  From the perspective of representation theory, it is easy to see that all of the indecomposable projectives and injectives, and all simples modules are convex.  Moreover, since convex modules are uniquely determined by their support, we agree with the sentiment in \cite{zigzag}, that $\mathcal{C}$ is the correct categorical framework for generalized persistence modules for $P$, when $P$ has arbitrary representation type.

\section{A Particular Class of Posets}
\label{posets}
In this section we confine our discussion to a certain class of finite posets.  Though easy to describe, most such posets are of wild representation type (see the discussion in Subsection \ref{algebra}).  We will restrict to $\mathcal{C}$, the full subcategory of $A(P)$-modules which are isomorphic to a direct sum of convex modules.

Let $P$ be a finite poset such that:
\begin{enumerate}
\item $P$ has a unique minimal element $m$,
\item for every maximal element $M_i \in P$, the interval $[m,M_i]$ is totally ordered, and
\item $[m,M_i] \cap [m,M_j] = \{m\}$ for all $i \neq j$.\\

\noindent
As a technical convenience, we sometimes also assume 
\item
their exists an $i_0$ with $\big| [m,M_{i_0}] \big| > \big| [m,M_i]\big|$, for all $i \neq i_0$.  

\end{enumerate}
That is, $P$ is a tree which brances only at the its unique minimal element and has one totally ordered segment longer than the others. 

\begin{definition}
If $P$ satisfies conditions (1), (2), (3), we say $P$ is an $n$-Vee, where $n$ denotes the number of maximal elements in $P$.  If, in addition, $P$ satisfies (4) we say that $P$ is an asymmetric $n$-Vee. 
\end{definition}

Clearly, a $1$-Vee is exactly a finite totally ordered set.  It is easy to see that every $1$-Vee is an asymmetric.  We will prove our isometry theorem for $n$-Vees.  
\begin{ex}
Poset D in Example \ref{Hasse} is an asymmetric $3$-Vee (with wild representation type).
\end{ex}

\begin{rmk}
\label{minimal}
The convex modules for $n$-Vees have some nice properties.  Note that if $P$ is any finite poset, then, the following two statements are equivalent:
\begin{enumerate}[(i)]
\item $P$ has a unique minimal element $m$ and every maximal interval in $P$, $[m,M_i]$ is totally ordered.
\item the support of every convex module has a unique minimal element. 
\end{enumerate}
That is to say, finite posets satisfying  only properties (1) and (2) in the definition for $n$-Vees are precisely those posets for which the support of a convex module always has a unique minimal element.  The proof is easy, but we include it.
\end{rmk}
\begin{proof}
First, if $P$ is as above, from the characterization of convex modules in Subsection \ref{subconvex} it is clear that the support of each convex module has a unique minimal element.  On the other hand, for a contradiction suppose $P$ satisfies (ii), but not (i).  Let $S \subseteq P$ denote the support of a potential convex module.  If $P$ has at least two minimals, then set $S$= $P$.  Thus it must be the case tht $P$ has a unique minimal $m$.  If there is a maximal interval $[m,M_j]$ contained in $P$ with $[m,M_j]$ not totally ordered.  Then, there exist $x, y \in [m,M_j]$ with $x, y$ not comparable.  But then $S = [x, M_j] \cup [y,M_j]$ is the support of a convex module contradicting (ii). 
\end{proof}

We will now establish some properties of the collection of translations of an asymmetric $n$-Vee.  Much (but not all) carries over to (general) $n$-Vees (see the end of the proof of Theorem \ref{main}).

\begin{lemma}
\label{T(P)}
Let $P$ be an asymmetric $n$-Vee, and let $(a,b)$ be any weights.  Let $d = d_{a,b}$ denote the weighted graph metric on the Hasse quiver of $P^+$ corresponding to $(a,b)$.  Then,
\begin{enumerate}[(i)]
\item For each $\epsilon \in \{ h(\Lambda): \Lambda  \in \mathcal{T}(P)\}$, the set $\{ \Gamma \in \mathcal{T}(P): h(\Gamma) = \epsilon \} $ has a unique maximal element ${\Lambda}_{\epsilon}$.
\item The set $\{ {\Lambda}_{\epsilon} \}$ is totally ordered, and ${\Lambda}_{\epsilon} \leq {\Lambda}_{\delta}$ if and only if $\epsilon \leq \delta$.
\item If $\Lambda, \Gamma \in \mathcal{T}(P)$ with $h(\Lambda), h(\Gamma) \leq \epsilon$ then there exists a ${\Lambda}_{\delta}$ with $\Lambda, \Gamma \leq {\Lambda}_{\delta}$, and $h({\Lambda}_{\delta})= \delta = \textrm{ max} \{ h(\Lambda), h(\Gamma) \}$.
\end{enumerate}

\end{lemma}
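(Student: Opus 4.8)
The plan is to describe $\mathcal{T}(P)$ explicitly enough to exhibit, for each $\epsilon$ in the height spectrum $\{h(\Lambda):\Lambda\in\mathcal{T}(P)\}$, a distinguished translation $\Lambda_\epsilon$ that is the \emph{greatest} translation of height at most $\epsilon$, and then to read off (i), (ii), (iii) from that one property. Write $m$ for the minimal element of $P$, write $v_{i,l}$ for the element at height $l$ on the $i$-th branch $[m,M_i]$, and let $i_0$ denote the (unique) longest branch; recall that in $P^{+}=P\cup\{\infty\}$ every translation fixes $\infty$. The first step classifies the translations of finite height into three kinds: (a) $\Lambda m=m$, in which case $\Lambda$ restricts to an independent upward shift on each totally ordered branch (possibly sending a terminal segment to $\infty$); (b) $\Lambda$ is the constant map to $\infty$; and (c) $\Lambda m$ lies strictly above $m$ on some branch, which by order-preservation forces every \emph{other} branch to be collapsed to $\infty$. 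The one place where property (4) is used is here: collapsing a branch $[m,M_j]$ to $\infty$ forces $h(\Lambda)\ge\max_{l\ge 1}d_{a,b}(v_{j,l},\infty)$, and this quantity is strictly largest when $j=i_0$; so if $\Lambda m$ sits off the long branch then $\Lambda$ must pay at least as much as collapsing the long branch itself. Consequently, among translations of any fixed bounded height, the value $\Lambda m$ ranges over the single chain $\{m\}\cup[v_{i_0,1},M_{i_0}]\cup\{\infty\}$, while at every other point $x$ the value $\Lambda x$ lies on the up-set of $x$, which is itself a chain. Hence any family of translations of bounded height has a well-defined pointwise supremum.

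Fix $\epsilon$ and let $\Lambda_{\le\epsilon}$ be the pointwise supremum of $\{\Lambda\in\mathcal{T}(P):h(\Lambda)\le\epsilon\}$. Since at each point the values lie on a finite chain, $\Lambda_{\le\epsilon}$ is well defined; as a pointwise supremum of order-preserving maps into a poset having the relevant joins it is again order-preserving, and $\Lambda_{\le\epsilon}p\ge p$, so $\Lambda_{\le\epsilon}\in\mathcal{T}(P)$. Moreover $h(\Lambda_{\le\epsilon})\le\epsilon$: at any $x$ the value $\Lambda_{\le\epsilon}x$ equals $\Lambda x$ for some $\Lambda$ of height $\le\epsilon$, so $d_{a,b}(x,\Lambda_{\le\epsilon}x)=d_{a,b}(x,\Lambda x)\le\epsilon$. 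By construction $\Lambda_{\le\epsilon}$ dominates every translation of height $\le\epsilon$, so it is the greatest element of that set. The substantive claim, needed in the last step, is that $h(\Lambda_{\le\epsilon})$ is exactly the largest height attained by any translation of height $\le\epsilon$; in particular, when $\epsilon$ lies in the spectrum this gives $h(\Lambda_{\le\epsilon})=\epsilon$, and we set $\Lambda_\epsilon:=\Lambda_{\le\epsilon}$.

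Granting that $\Lambda_\epsilon$ is the greatest translation of height $\le\epsilon$ and that $h(\Lambda_\epsilon)=\epsilon$, the three assertions follow formally. For (i): any $\Gamma\in\mathcal{T}(P)$ with $h(\Gamma)=\epsilon$ has $h(\Gamma)\le\epsilon$, hence $\Gamma\le\Lambda_\epsilon$, so $\Lambda_\epsilon$ is the unique maximal element of $h^{-1}(\epsilon)$. For (ii): if $\epsilon\le\delta$ then $\{h\le\epsilon\}\subseteq\{h\le\delta\}$, so $\Lambda_\epsilon\le\Lambda_\delta$; conversely if $\Lambda_\epsilon\le\Lambda_\delta$ but $\delta<\epsilon$, then $\Lambda_\delta$ has height $\le\delta<\epsilon$, hence also $\Lambda_\delta\le\Lambda_\epsilon$, forcing $\Lambda_\epsilon=\Lambda_\delta$ and $\epsilon=h(\Lambda_\epsilon)=h(\Lambda_\delta)\le\delta$, a contradiction; totality of $\{\Lambda_\epsilon\}$ is then inherited from totality of $\RR$. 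For (iii): given $\Lambda,\Gamma$ with $h(\Lambda),h(\Gamma)\le\epsilon$, put $\delta:=\max\{h(\Lambda),h(\Gamma)\}$, which lies in the spectrum; then $\Lambda,\Gamma\le\Lambda_\delta$ and $h(\Lambda_\delta)=\delta=\max\{h(\Lambda),h(\Gamma)\}$, as required.

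I expect the main obstacle to be the claim that $h(\Lambda_{\le\epsilon})$ realizes the maximal height $\le\epsilon$. The difficulty is that $d_{a,b}$ on $P^{+}$ is \emph{not} monotone along chains: because suspension at $\infty$ creates cycles, a point $v_{i,l}$ on a long branch can be strictly closer to $\infty$ than to some intermediate element above it, so replacing a translation by a larger one can in principle decrease a displacement. One must therefore show that enlarging a height-$\delta$ translation up to $\Lambda_{\le\epsilon}$ never drops the overall height below $\delta$: whenever a displacement $d_{a,b}(x,\Lambda x)=\delta$ is lost by pushing the image of $x$ all the way to $\infty$, some point lying below $x$ on the same branch still witnesses a displacement of at least $\delta$ under $\Lambda_{\le\epsilon}$. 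Making this precise uses the explicit formulas for $d_{a,b}(v_{i,l},\infty)$ and $d_{a,b}(v_{i,l},v_{i,l'})$ on a branch, together again with property (4), which is exactly what forbids two incomparable maximal translations from living on distinct branches of equal maximal length. Everything else in the argument is formal; this branch-by-branch estimate is the technical heart.
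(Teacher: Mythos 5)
Your construction of $\Lambda_\epsilon$ as the pointwise supremum of $\{\Lambda : h(\Lambda)\le\epsilon\}$ is a genuinely different route from the paper, which instead writes down $\Lambda_\epsilon$ explicitly in three regimes (according to whether $\epsilon < aT+b$, $aT+b\le\epsilon<aT_{i_0}+b$, or $\epsilon=aT_{i_0}+b$) and checks domination case by case. The two candidates coincide, and your approach is structurally cleaner: once you know $\Lambda_{\le\epsilon}$ is a translation of height exactly $\epsilon$ dominating everything in $\{h\le\epsilon\}$, (i)--(iii) do fall out formally just as you say.

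However, you have left a genuine gap, and you say so yourself: the claim that $h(\Lambda_{\le\epsilon})$ realizes the maximal attained height $\le\epsilon$ is asserted but not proved, and the paper's explicit formulas (which exhibit a witness point with displacement $\epsilon$) are precisely what does that work. Your worry about non-monotonicity of $d_{a,b}$ along up-chains is where I think you have gone astray. If you read $d_{a,b}$ as the undirected shortest-path metric on the Hasse quiver of $P^{+}$, then yes, for very asymmetric Vees (say $T_{i_0} > T_j + 2b/a$) a point can be closer to $\infty$ than to some vertex above it by routing around through another branch, and then the paper's own formula $\Lambda_\epsilon(x)=\max\{y : d(x,y)\le\epsilon\}$ would fail to be order-preserving. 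The intended reading, consistent with every use the paper makes of $d$, is that for $x\le y$ the quantity $d_{a,b}(x,y)$ is the weighted length of the \emph{directed} path from $x$ to $y$ in the Hasse quiver: on a single branch $d(v_{i,l},v_{i,l'})=a(l'-l)$ and $d(v_{i,l},\infty)=a(T_i-l)+b$, which is strictly increasing in the target. Under that reading your ``main obstacle'' evaporates: if $\Lambda^*\le\Lambda_{\le\epsilon}$ with $h(\Lambda^*)=\epsilon$, then monotonicity of $d(x,\cdot)$ along the up-chain of $x$ gives $d(x,\Lambda_{\le\epsilon}x)\ge d(x,\Lambda^*x)$ at every $x$, hence $h(\Lambda_{\le\epsilon})\ge\epsilon$, and with your upper bound the equality is immediate. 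So the gap in your write-up is real, but closing it does not require the delicate branch-by-branch estimate you anticipate; it requires choosing the right model for $d_{a,b}$ and then a one-line monotonicity observation, which is exactly what the paper's explicit formulas encode.
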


\begin{proof}
Let $P$ be as above.  First, say $n > 1$, then $P= \bigcup [m,M_i]$, with $[m,M_{i_0}]$ of maximal cardinality. Let $T_i = |[m,M_i]|-1$, so by hypothesis, $T_{i_0} > T_i$ for all $i \neq i_0$.  Let $T$ = max$\{T_i: i \neq i_0\}$ (note that if $P$ was not asymmetric $T = T_{i_0}$).  Let $\epsilon \in \{ h(\Lambda): \Lambda  \in \mathcal{T}(P)\}$ and suppose $h(\Lambda)=\epsilon$.  If $\Lambda m > m$, then $\epsilon \geq a T + b$, since;
\begin{eqnarray*}
&&\textrm{ if } \Lambda m = \infty \textrm{, then } h(\Lambda) = a T_{i_0} + b ,\\
&&\textrm{ if } \Lambda m \in (m,M_{i_0}] \textrm{, then } h(\Lambda) \geq a T + b\textrm{, and }\\
 &&\textrm{ if } \Lambda m \in (m,M_i], i \neq i_0 \textrm{, then } h(\Lambda) = a T_{i_0} + b.
 \end{eqnarray*}

Therefore, if $\epsilon < aT + b$, $\Lambda m = m$.  Then, $\Lambda \leq {\Lambda}_{\epsilon}$, where\\
${\Lambda}_{\epsilon} (x)= $
$\begin{cases}
m,\textrm{ if } x = m\\
\textrm{max} \{y \in (m,M_i]\cup \{\infty\}: d(x,y) \leq \epsilon\}, \textrm{ if } x \in (m,M_i]\\
\end{cases}$  

\medskip
\noindent
On the other hand, if $aT_{i_0} + b > \epsilon \geq aT+b$, then $\Lambda m \in [m,M_{i_0}]$, and $\Lambda((m,M_i])= \infty$ for $i \neq i_0$.  In this case, $\Lambda \leq {\Lambda}_{\epsilon}$, where\\
${\Lambda}_{\epsilon}(x) =$
$\begin{cases}
\infty, x \in (m,M_i], i \neq i_0\\
\textrm{max} \{y \in (m,M_{i_0}]\cup \{\infty\}: d(x,y) \leq \epsilon\}, \textrm{ if } x \in [m,M_{i_0}]\\
\end{cases}$

\medskip
\noindent
Lastly, if $\epsilon = aT_{i_0}+b$, then $\Lambda \leq \Lambda_{\epsilon}$, where $\Lambda_{\epsilon}(x) = \infty$, for all $x$.

Note that the formulae above are well defined, since $[m,M_i] \cap [m,M_j] = \{m\}$ for all $i \neq j$.  Now, suppose that $n = 1$.  Then $\Lambda \leq \Lambda_{\epsilon}$, where $\Lambda_{\epsilon}(x) = \textrm{max}\{y \geq x: d(x,y) \leq \epsilon\}$ for any $\epsilon$.  This proves (i).  The expressions for $\Lambda_{\epsilon}$ show that (ii) holds.  Now let $\Lambda, \Gamma \in \mathcal{T}(P)$ with $h(\Lambda), h(\Gamma) \leq \epsilon$, and suppose max$\{h(\Lambda), h(\Gamma)\} = \delta$.  Without loss of generality, say $h(\Lambda)=\delta, h(\Gamma) \leq \delta$.  Then, $\Lambda \leq {\Lambda}_{\delta}$ and $\Gamma \leq {\Lambda}_{h(\Gamma)} \leq {\Lambda}_{\delta}$, by (i), (ii) as required.

\end{proof}

The important observation is that although $\mathcal{T}(P)$ is not totally ordered, (for $n > 1$) it is directed in such a way that one may pass to a larger translation without increasing the height.  In contrast, for an arbitrary finite poset $P$,  $\mathcal{T}(P)$ will still be a directed set (because we suspended at infinity).  It may be the case, however, that for all ${\Lambda}_0$ with $ \Lambda, \Gamma \leq {\Lambda}_0$, $h({\Lambda}_0) > \kappa > \textrm{max}\{h(\Lambda),h(\Gamma)\}$.  That is to say, one may have to pay a price when passing to any larger common translation.  Lemma \ref{T(P)} shows that this does not happen for asymmetric $n$-Vees.  We are now ready to define the width of a convex module.

\begin{lemma}
\label{W}
Let $P$ be an asymmetric $n$-Vee, and let $(a, b)$ be a weight.  Then for all $I$ convex,  the following are equal;
\begin{enumerate}[(i)]
\item $W(I) = W_1(I) = \textrm{ min}\{\epsilon: \exists \Lambda, \Gamma \in \mathcal{T}(P), h(\Lambda), h(\Gamma) \leq \epsilon, \textrm{and Hom}(I,I\Lambda \Gamma) = 0\}$
\item $W_2(I) = \textrm{min}\{\epsilon: \exists \Lambda \in \mathcal{T}(P), h(\Lambda) \leq \epsilon, \textrm{ and Hom}(I,I {\Lambda}^2)=0\}$.
\item $W_3(I) = \textrm{min}\{\epsilon:\exists {\Lambda}_{\epsilon} \in \mathcal{T}(P) \textrm{with Hom}(I,I{{\Lambda}_{\epsilon}}^2)=0\}$.
\end{enumerate}
\end{lemma}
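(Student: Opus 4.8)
\medskip
\noindent\emph{Plan of proof.}
The plan is to establish the chain $W_1(I)\le W_2(I)\le W_3(I)\le W_1(I)$. The first two inequalities are immediate by specialization: if $h(\Lambda)\le\epsilon$ and $\textrm{Hom}(I,I\Lambda^2)=0$, then taking $\Gamma=\Lambda$ shows $\epsilon$ is admissible for $W_1$, so $W_1(I)\le W_2(I)$; and if $\Lambda_\epsilon$ exists with $\textrm{Hom}(I,I{\Lambda_\epsilon}^2)=0$, then $\Lambda=\Lambda_\epsilon$ has $h(\Lambda_\epsilon)=\epsilon$ by Lemma~\ref{T(P)}(i), so $\epsilon$ is admissible for $W_2$ and $W_2(I)\le W_3(I)$. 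I would also note at the outset that the three minima are taken over nonempty sets, and are attained: the translation sending every point of $P$ to $\infty$ kills $I$, so $\textrm{Hom}(I,I\Lambda^2)=0$ for it and it is one of the $\Lambda_\epsilon$; and $\mathcal{T}(P)$ is finite.

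The content lies in the inequality $W_3(I)\le W_1(I)$, which I would derive from the following claim. Write $m_I$ for the unique minimal element of $\textrm{Supp}(I)$, which exists and is a minimum by Remark~\ref{minimal}, since an $n$-Vee satisfies conditions (1) and (2). Then for \emph{every} translation $\Sigma\in\mathcal{T}(P)$,
$$\textrm{Hom}(I,I\Sigma)=0\quad\Longleftrightarrow\quad \Sigma(m_I)\notin\textrm{Supp}(I).$$
To prove this I would argue as follows. Since $I$ is convex I may assume all its structure maps are identities (Definition~\ref{convex}); then any morphism $\phi\colon I\to I\Sigma$ is determined by the scalar $\phi_{m_I}$ via $\phi_x=(I\Sigma)(m_I\le x)\circ\phi_{m_I}$ for $x\in\textrm{Supp}(I)$ and $\phi_x=0$ otherwise, using $m_I=\min\textrm{Supp}(I)$ and $I(m_I\le x)=\mathrm{id}$. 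If $\Sigma(m_I)\notin\textrm{Supp}(I)$ then $(I\Sigma)(m_I)=0$, forcing $\phi=0$. Conversely, if $\Sigma(m_I)\in\textrm{Supp}(I)$, the only commutativity constraints that could obstruct $\phi_{m_I}=1$ from defining a morphism demand that $I(\Sigma m_I\le\Sigma y)=0$ for every $y\notin\textrm{Supp}(I)$ lying above some element of $\textrm{Supp}(I)$; such $y$ satisfy $y\ge m_I$, and here the $n$-Vee geometry is decisive: whether $\textrm{Supp}(I)$ lies inside a single branch (case $m_I\neq m$) or is a union of initial segments of several branches (case $m_I=m$), the up-set of any such $y$ is disjoint from $\textrm{Supp}(I)$, so $\Sigma y\ge y$ forces $\Sigma y\notin\textrm{Supp}(I)$ and the obstruction is void. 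One should allow for $I\Sigma$ to be decomposable; the argument above never uses indecomposability of $I\Sigma$. The same branch-by-branch inspection yields the monotonicity I actually need: if $\Sigma\le\Sigma'$ in $\mathcal{T}(P)$ and $\Sigma(m_I)\notin\textrm{Supp}(I)$, then $\Sigma'(m_I)\ge\Sigma(m_I)\ge m_I$ is again outside $\textrm{Supp}(I)$, because above $m_I$ the complement of $\textrm{Supp}(I)$ is an up-set; hence $\textrm{Hom}(I,I\Sigma)=0$ implies $\textrm{Hom}(I,I\Sigma')=0$ whenever $\Sigma\le\Sigma'$.

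Granting this, I would finish as follows. Pick $\Lambda,\Gamma\in\mathcal{T}(P)$ with $h(\Lambda),h(\Gamma)\le\epsilon:=W_1(I)$ and $\textrm{Hom}(I,I\Lambda\Gamma)=0$. By Lemma~\ref{T(P)}(iii) there is a translation $\Lambda_\delta$ with $\Lambda,\Gamma\le\Lambda_\delta$ and $\delta:=h(\Lambda_\delta)=\max\{h(\Lambda),h(\Gamma)\}\le\epsilon$. Composition of translations is monotone in each argument, so $\Lambda\Gamma\le{\Lambda_\delta}^2$, and the monotonicity just established upgrades $\textrm{Hom}(I,I\Lambda\Gamma)=0$ to $\textrm{Hom}(I,I{\Lambda_\delta}^2)=0$. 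Thus $\delta$ is admissible for $W_3$, so $W_3(I)\le\delta\le\epsilon=W_1(I)$, closing the cycle.

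The step I expect to be the main obstacle is the displayed equivalence $\textrm{Hom}(I,I\Sigma)=0\iff\Sigma(m_I)\notin\textrm{Supp}(I)$: collapsing all the commutativity constraints for a candidate map $I\to I\Sigma$ down to the single condition $\Sigma(m_I)\in\textrm{Supp}(I)$ is exactly where one must exploit that $P$ is an $n$-Vee, via the observation that the up-set of any vertex lying above $\textrm{Supp}(I)$ but not inside it misses $\textrm{Supp}(I)$ entirely. Keeping the two cases $m_I=m$ and $m_I\neq m$ straight, and allowing for $I\Sigma$ to be decomposable, is where the care is needed; everything else is bookkeeping.
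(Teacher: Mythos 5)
Your proposal is correct and follows essentially the same approach as the paper: both rest on the characterization $\textrm{Hom}(I,I\Sigma)=0\iff\Sigma(m_I)\notin\textrm{Supp}(I)$ for $m_I$ the unique minimum of $\textrm{Supp}(I)$ (which the paper records just before its proof, deferring details to Section~\ref{homomorphisms and translations}), derive the monotonicity of the vanishing of $\textrm{Hom}(I,I\cdot)$ along $\mathcal{T}(P)$, and then invoke Lemma~\ref{T(P)}(iii) to dominate $\Lambda\Gamma$ by $\Lambda_\delta^2$. The only difference is cosmetic: the paper closes the circle via $W_3\le W_2\le W_1$ (main argument) together with $W_3\ge W_2\ge W_1$ (nesting of the admissible sets of $\epsilon$), whereas you run $W_1\le W_2\le W_3\le W_1$.
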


Before proving Lemma \ref{W}, we note that for any $I$ convex and for any $\theta \in \mathcal{T}(P)$, 
\begin{eqnarray*}
\textrm{Hom}(I,I\theta) \neq 0 \iff  \exists x \in \textrm{Supp}(I), \theta x \in \textrm{Supp}(I) \iff \theta x' \in \textrm{Supp}(I) \textrm{, for } x' \textrm{ minimal in Supp}(I).  
\end{eqnarray*}

This follows from general properties of module homomorphisms, and the observation in Remark \ref{minimal} that convex modules for $n$-Vees have unique minimal elements.  (See the Section \ref{homomorphisms and translations} for a detailed analysis of homomorphisms and translations)

Using this fact, we see that if $\Lambda \leq \Gamma$ and Hom$(I,I\Lambda) = 0$, then Hom$(I,I\Gamma)=0$.  Thus this condition defining $W$ produces an interval in $\{h(\Lambda): \Lambda \in \mathcal{T}(P)\}$.  We will now prove Lemma \ref{W}.
\begin{proof}
Let $\Lambda, \Gamma \in \mathcal{T}(P)$ with $h(\Lambda), h(\Gamma) \leq \epsilon$, and suppose Hom$(I,I\Lambda\Gamma) = 0$, and $\delta = \textrm{max}\{h(\Lambda), h(\Gamma)\}$.  Then, by Lemma \ref{T(P)}, there exists ${\Lambda}_{\delta}$, with $h({\Lambda}_{\delta})=\delta$ and $\Lambda, \Gamma \leq {\Lambda}_{\delta}$.  Then $\Lambda \Gamma \leq {{\Lambda}_{\delta}}^2$ so Hom$(I,I{{\Lambda}_{\delta}}^2)=0$, so $W_3(I) \leq W_2(I) \leq W(I)$.  But $S \subseteq T \implies \textrm{inf}(S) \geq \textrm{inf}(T)$, thus $W_3(I) \geq W_2(I) \geq W(I)$, so all are equal.  With this equivalence established, we define the width of a convex module.

\end{proof}

\begin{definition}
\label{defW}
Let $P$ be an asymmetric $n$-Vee and let $(a,b)$ be a weight.  Let $I$ be convex.  Then,
$$W(I) = W_1(I) = \textrm{ min}\{\epsilon: \exists \Lambda, \Gamma \in \mathcal{T}(P), h(\Lambda), h(\Gamma) \leq \epsilon, \textrm{and Hom}(I,I\Lambda \Gamma) = 0\}.$$
\end{definition}
While this definition of the width of a module is formulated algebraically, and is natural considering the structure of $\mathcal{T}(P)$, it is not without complication.  Intuitively $\frac{1}{2}|\textrm{Supp}(I)|$ (or perhaps $\lceil\frac{1}{2}|\textrm{Supp}(I)|\rceil$) is a first approximation of $W(I)$.  Indeed, this is the discrete analogue of the width used in the classical isometry theorem \cite{induced_matchings}, as their work corresponds to translations that are exactly constant shifts.  This discrete analogue of this is the choice of weights $(a,b) = (1,1)$ on a $1$-Vee.  For an $n$-Vee, however, modules with smaller support may happen to have large widths or the opposite.  For example, if $P$ is a $2$-Vee and $I$ is the simple convex module supported at $m$, then $W(I) = aT+b$.  In contrast, if $x \in (m,M_i)$ and $J$ is the convex module supported at $x, W(J) = a$.  Moreover, any convex module supported at $M_i$ for some $i$ necessarily has width greater or equal to $b$.  This is relevant, as no relation between $a$ and $b$ is specified.

The following Proposition will prove useful in Section \ref{totally} when we produce an explicit matching for $1$-Vees.  This result is an analogue of the corresponding statement in \cite{induced_matchings}.
\begin{prop}
\label{inj surj}
Let $P$ be an asymmetric $n$-Vee, let $A = {\bigoplus}_iA_i, C = \bigoplus_jC_j$ be in $\mathcal{C}$.  For any module $M$, let $B(M)$ denote the barcode of $M$ viewed as a multiset, and let $\Lambda \in \mathcal{T}(P)$.  
Then,
\begin{enumerate}[(i)]
\item If $A \xhookrightarrow{f} C$ is an injection, then for all $d \in P$, the set 
\begin{eqnarray*}
&&|\{i: [-,d] \textrm{ is a maximal totally ordered subset of Supp}(A_i)\}| \leq\\
&& |\{j: [-,d] \textrm{ is a maximal totally ordered subset of Supp}(C_j)\}|, \textrm{ and }
 \end{eqnarray*}
\item  If $A \xrightarrow{g} C$ is a surjection, then for all $b \in P$,
\begin{eqnarray*}
&&|\{j:[b,-] \textrm{ is a maximal totally ordered subset of Supp}(C_j) \}| \leq\\
&&|\{i:[b,-] \textrm{ is a maximal totally ordered subset of Supp}(A_i) \}|.
\end{eqnarray*}
\item If $A$ and $C$ are $(\Lambda,\Lambda)$-interleaved, and $A \xrightarrow{\phi} C\Lambda$ is one of the homomorphisms, then for all $I$ in $B(ker(\phi))$, $W(I) \leq h(\Lambda)$.
\item If $A$ and $C$ are $(\Lambda,\Lambda)$-interleaved, and $A \xrightarrow{\phi} C\Lambda$ is one of the homomorphisms, then for all $J$ in $B(cok(\phi))$, $W(J) \leq h(\Lambda)$.
\end{enumerate}
\end{prop}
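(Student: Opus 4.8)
The plan is to treat (i)--(ii) together by reading off endpoints of bars as dimensions of (co)kernels of structure maps and then pushing these along $f$ or $g$, and to treat (iii)--(iv) together by showing that an interleaving forces the canonical morphism of $\ker\phi$ (resp.\ $\coker\phi$) into its own $\Lambda^{2}$-shift to vanish. For (i) I would first record that, since $P$ is an $n$-Vee, the support of each convex module is interval-convex with a unique minimal element lying in a single branch below each of its points (Remark~\ref{minimal}, Section~\ref{subconvex}); consequently, for $M=\bigoplus_i A_i\in\mathcal C$ and $d\in P$, the quantity $\#\{i:[-,d]\text{ is a maximal totally ordered subset of }\mathrm{Supp}(A_i)\}$ equals the number of summands in whose support $d$ is a maximal element, and this equals $\dim_K M(d)$ if $d$ is maximal in $P$, equals $\dim_K\ker\bigl(M(d\le d^{+})\bigr)$ if $d$ is non-maximal with unique cover $d^{+}$, and equals $\dim_K\bigcap_{c}\ker\bigl(M(m\le c)\bigr)$ over the covers $c$ of $m$ if $d=m$; these identities are immediate from convex modules being thin with identity structure maps. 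An injection $A\xhookrightarrow{f}C$ gives injections $A(d)\hookrightarrow C(d)$ which, being compatible with structure maps, restrict to injections of these kernels and of their intersections over the covers of $m$, whence (i). Part (ii) is the mirror statement: $\#\{j:[b,-]\text{ is a maximal totally ordered subset of }\mathrm{Supp}(C_j)\}$ counts summands whose support has $b$ as its minimal element, equal to $\dim_K M(b)$ if $b=m$ and to $\dim_K\coker\bigl(M(b^{-}\le b)\bigr)$ otherwise, and a surjection $A\xrightarrow{g}C$ induces surjections on these cokernels, whence (ii).

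For (iii) and (iv), let $\sigma_A\colon A\to A\Lambda^{2}$ and $\sigma_C\colon C\to C\Lambda^{2}$ be the canonical morphisms closing up the interleaving squares, so $(\psi\Lambda)\circ\phi=\sigma_A$ and $(\phi\Lambda)\circ\psi=\sigma_C$. From the first identity $\ker\phi\subseteq\ker\sigma_A$; since the structure maps of the submodule $\ker\phi\le A$ are restrictions of those of $A$, the canonical morphism $\ker\phi\to(\ker\phi)\Lambda^{2}$ is the restriction of $\sigma_A$ and hence vanishes. From the second identity, at a vertex $w$ the relevant structure map of $C\Lambda$ is $(C\Lambda)(w\le\Lambda^{2}w)=C(\Lambda w\le\Lambda^{3}w)=(\sigma_C)_{\Lambda w}=\phi_{\Lambda^{2}w}\circ\psi_{\Lambda w}$, whose image lies in $\Ima(\phi_{\Lambda^{2}w})=(\Ima\phi)(\Lambda^{2}w)$, exactly the subspace killed in $(\coker\phi)(\Lambda^{2}w)$; so the canonical morphism $\coker\phi\to(\coker\phi)\Lambda^{2}$ vanishes as well. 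Granting for the moment that $\ker\phi$ and $\coker\phi$ lie in $\mathcal C$, write them as $\bigoplus_k I_k$ and $\bigoplus_k J_k$ with $I_k,J_k$ convex; the canonical morphism into the $\Lambda^{2}$-shift is block-diagonal for these decompositions, so it restricts to $0$ on each summand. Thus for $I=I_k$ (resp.\ $J_k$), with $x'$ the unique minimal vertex of $\mathrm{Supp}(I)$, we get $I(x'\le\Lambda^{2}x')=0$, and since $I(x')\ne0$ and $I$ is convex this is impossible unless $\Lambda^{2}x'\notin\mathrm{Supp}(I)$; by the criterion recalled just before Lemma~\ref{W}, $\Lambda^{2}x'\notin\mathrm{Supp}(I)$ is equivalent to $\Hom(I,I\Lambda^{2})=0$, and instantiating the translation in the $W_2$-description of Lemma~\ref{W} by $\Lambda$ itself gives $W(I)=W_2(I)\le h(\Lambda)$, which is (iii) and (iv).

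The main obstacle is exactly the step I set aside: that $\ker\phi$ and $\coker\phi$ belong to $\mathcal C$. This is \emph{not} a formal consequence of $A,C\in\mathcal C$, since over an $n$-Vee $\mathcal C$ is in general not closed under submodules or quotients --- already a $3$-Vee has an indecomposable non-thin module sitting inside a direct sum of three convex modules supported on the three length-two branches. For $1$-Vees, which is the case actually used in Section~\ref{totally}, it is automatic because $\mathcal C$ is then the whole module category; in general one must use that the interleaving identities constrain $\phi$ strongly (in particular $\ker\phi\subseteq\ker\sigma_A$, which for small $h(\Lambda)$ already forces $\phi$ to be injective at the fixed points) and deduce from the description of homomorphisms between convex modules in Section~\ref{homomorphisms and translations} that $\ker\phi$ and $\coker\phi$ do decompose into convex modules. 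A secondary point needing care is the explicit endpoint-count identities of the first paragraph, which genuinely use the interval-convex, unique-minimum structure of supports over an $n$-Vee; and the one idea beyond \cite{induced_matchings} is the uniform packaging of both the kernel and the cokernel bound as ``the canonical morphism into the $\Lambda^{2}$-shift vanishes,'' which through Lemma~\ref{W} is precisely a width bound.
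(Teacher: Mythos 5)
Your proof of (i)--(ii) takes a genuinely different and cleaner route than the paper: you identify the relevant counts directly as dimensions of $\ker\bigl(M(d\le d^{+})\bigr)$ (or $\coker\bigl(M(b^{-}\le b)\bigr)$, or $M(d)$ at a maximal vertex, or the intersection of the kernels over covers of $m$), which is valid since the summands are thin convex modules on a tree, and you then use that an injection (resp.\ surjection) of representations restricts to an injection of these kernels (resp.\ a surjection of these cokernels). The paper instead passes to the quotients $A/\mathcal{I}_{k}A$ and compares branches one at a time, and for (ii) sketches an induction; your argument is more direct and makes the endpoint-counting transparent. Your treatment of (iii) is essentially the same idea as the paper's (the canonical map $A\to A\Lambda^{2}$ restricted to $\ker\phi$ factors through $\phi$ and hence vanishes), and your treatment of (iv) as the dual statement --- the canonical map of $\coker\phi$ into its $\Lambda^{2}$-shift factors through $\Ima\phi$ and so also vanishes --- is noticeably cleaner than the paper's contradiction argument with explicit basis elements $b_{x}+\Ima\phi$ and an auxiliary summand $C_{j}$. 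The uniform packaging ``the canonical morphism into the $\Lambda^{2}$-shift vanishes on both the kernel and the cokernel'' is an improvement: it is exactly the form in which the fact is later used in the proof of Proposition~\ref{trim}.

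The gap you flag --- that passing from ``the canonical map $\sigma_{\ker\phi}$ and $\sigma_{\coker\phi}$ vanish'' to the width bound $W(I_k)\le h(\Lambda)$ requires the indecomposable summands $I_{k}$ to be convex, and that $\ker\phi,\coker\phi\in\mathcal{C}$ is not formal for $n\ge 2$ --- is genuine and worth flagging, but you should be aware that the paper's own proof has the same unaddressed assumption. The paper writes ``suppose there exists $I\in B(\ker\phi)$ with $\Hom(I,I\Lambda^{2})\ne 0$'' and concludes the canonical map $I\to I\Lambda^{2}$ is nonzero; that inference, and the basis-element manipulations in (iv), likewise use that the summands are thin/convex. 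The step is vacuously harmless where the proposition is actually applied (Proposition~\ref{trim} feeds into Theorem~\ref{matching}, which is for $1$-Vees, where every module is a direct sum of convex modules), but as a statement about general asymmetric $n$-Vees it is not closed either by you or by the paper. Your $D_{4}$-type example of a non-thin indecomposable inside a direct sum of three convex modules correctly shows that $\mathcal{C}$ is not closed under submodules; what neither you nor the paper shows is that such a module cannot occur as a summand of $\ker\phi$ or $\coker\phi$ when $\phi$ is half of an interleaving between two objects of $\mathcal{C}$.

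Two small points of care, both handled correctly in your write-up but worth noting. First, the identities $(C\Lambda)(w\le\Lambda^{2}w)=C(\Lambda w\le\Lambda^{3}w)=\phi_{\Lambda^{2}w}\circ\psi_{\Lambda w}$ are exactly the evaluation of the interleaving condition $\sigma_{C}=\phi\Lambda\circ\psi$ at $q=\Lambda w$; this is the crux of (iv) and you got it right. Second, the passage from $\Hom(I,I\Lambda^{2})=0$ to $W(I)\le h(\Lambda)$ uses $W=W_{1}=W_{2}$ from Lemma~\ref{W} with the translation taken to be $\Lambda$ itself (not the maximal one of its height), which is the $W_{2}$ formulation and does not require $\Lambda$ to be maximal; your instantiation is correct.
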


Before proving the Proposition \ref{inj surj}, we state a Lemma.
\begin{lemma}
\label{quotient}
Let $P$ be an asymmetric $n$-Vee, say $P= \bigcup [m,M_i]$, with $[m,M_i]$ totally ordered.  Let $m_i = \textrm{ min}(m,M_i]$, and let ${\mathcal{I}}_j$ be the left ideal in $A(P)$ generated by $\{m_i: i \neq j\}$.
Then,
\begin{enumerate}[(i)]
\item For any $M$ convex, \\
$M/{\mathcal{I}}_j M$ is 
$\begin{cases}
0,\textrm{ if Supp}(M) \cap [m,M_j] = \phi\\
\textrm{the convex module with support given by } Supp(M) \cap [m,M_j] \textrm{ otherwise. }\\
\end{cases}$
\item For $A, B \in \mathcal{C}$, If $f$ is a homomorphism $A \xrightarrow{f} B/{\mathcal{I}}_j B$, then $f$ factors through $A/ {\mathcal{I}}_j A$.

\end{enumerate}
\end{lemma}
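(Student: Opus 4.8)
The plan is to establish (i) by pinning down the submodule $\mathcal I_j M$ inside a convex module $M$, and then to obtain (ii) formally from (i) by a support argument (over the path algebra $A(P)$ every submodule is a subrepresentation, hence is supported on a subset of $P$). First I would make $\mathcal I_j$ concrete: for an $n$-Vee the Hasse quiver $Q_P$ is a tree branching only at $m$, so it has no parallel paths and $A(P)=KQ_P$. Unwinding the definition, $\mathcal I_j$ is the left ideal cut out by the vertices lying off the $j$-th branch, so that for any $A(P)$-module $N$ the submodule $\mathcal I_j N$ is the subrepresentation of $N$ generated by the spaces $N(x)$ with $x\notin[m,M_j]$. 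Since the branches meet only at $m$, no vertex of a branch $(m,M_i]$ with $i\ne j$ lies below any vertex of $[m,M_j]$, and hence $\mathcal I_j N=\bigoplus_{y\in \operatorname{Supp}(N)\setminus[m,M_j]}N(y)$, once one also pushes each $N(x)$ forward along the connecting maps within its branch.

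Then, for (i), let $M$ be convex with support $S$; by Definition~\ref{convex} every map $M(x\le y)$ with $x,y\in S$ is the identity, and by Remark~\ref{minimal} $S$ has a unique minimal element $x'$. Suppose first $m\notin S$, so $S$ lies in a single branch $(m,M_i]$. If $i=j$ then no $N(x)$ with $x\notin[m,M_j]$ is nonzero, so $\mathcal I_j M=0$ and $M/\mathcal I_j M=M$ is the convex module on $S=S\cap[m,M_j]$. If $i\ne j$ then $e_{x'}\in\mathcal I_j$ and, since $x'=\min S$, the submodule $A(P)e_{x'}M$ is all of $M$; hence $\mathcal I_j M=M$, $M/\mathcal I_j M=0$, and indeed $S\cap[m,M_j]=\varnothing$. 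Now suppose $m\in S$. Then $x'=m$ and, by interval-convexity of $S$, the set $S\cap(m,M_i]$ is an initial segment of each branch it meets, so the description of $\mathcal I_j N$ above yields $\mathcal I_j M=\bigoplus_{y\in S\setminus[m,M_j]}M(y)$. Consequently $M/\mathcal I_j M$ is the representation supported on $S\cap[m,M_j]$ with all structure maps identities, i.e.\ the convex module with that support, which is nonzero since $m\in S\cap[m,M_j]$.

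Finally, for (ii), write $A=\bigoplus_i A_i$ and $B=\bigoplus_k B_k$ with all $A_i,B_k$ convex. Since ideals act componentwise on direct sums, $B/\mathcal I_j B=\bigoplus_k B_k/\mathcal I_j B_k$ and $\mathcal I_j A=\bigoplus_i\mathcal I_j A_i$, so part (i) gives $\operatorname{Supp}(B/\mathcal I_j B)\subseteq[m,M_j]$ while $\mathcal I_j A$ is supported inside $\bigcup_i\bigl(\operatorname{Supp}(A_i)\setminus[m,M_j]\bigr)$, which is disjoint from $[m,M_j]$. Given $f\colon A\to B/\mathcal I_j B$, the image $f(\mathcal I_j A)$ is a subrepresentation of $B/\mathcal I_j B$ and a quotient of $\mathcal I_j A$, so its support is contained in $\operatorname{Supp}(\mathcal I_j A)\cap\operatorname{Supp}(B/\mathcal I_j B)=\varnothing$; hence $f(\mathcal I_j A)=0$, i.e.\ $\mathcal I_j A\subseteq\ker f$, and $f$ factors through $A\twoheadrightarrow A/\mathcal I_j A$ by the universal property of the quotient.

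I expect the only real work to be the bookkeeping in the proof of (i): identifying $\mathcal I_j M$ precisely, and in particular keeping the case where $S$ avoids $m$ (so lies in a single branch) carefully separate from the case where $S$ contains $m$ and fans out into several branches. Part (ii), and the reduction of $\mathcal I_j$ to an explicit subrepresentation, are then formal.
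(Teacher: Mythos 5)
Your proof of part (i) silently replaces $\mathcal{I}_j$ by a larger ideal. The lemma takes $\mathcal{I}_j$ to be the left ideal generated by $\{m_i:i\neq j\}$ — that is, by the idempotents $e_{m_i}$ where $m_i=\min(m,M_i]$ is the single vertex covering $m$ on branch $i$. So $\mathcal{I}_jN$ is the subrepresentation of $N$ generated by the spaces $N(m_i)$, $i\neq j$, and \emph{not} by all $N(x)$ with $x\notin[m,M_j]$ as you assert. This matters precisely in your case ``$m\notin S$ and $S$ lies in a branch $i\neq j$'': you write ``$e_{x'}\in\mathcal I_j$'' for $x'=\min S$, but if $x'>m_i$ then the lazy path $e_{x'}$ starts at $x'\neq m_l$ for every $l$ and so lies in none of the summands $A(P)e_{m_l}$; hence $e_{x'}\notin\mathcal I_j$. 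With the definition as written, such a convex $M$ has $M(m_l)=0$ for every $l$, so $\mathcal I_jM=0$ and $M/\mathcal I_jM=M\neq0$, in conflict with the asserted formula. (The paper dismisses (i) as ``obvious,'' and this corner case is never hit in the one place the lemma is applied, Proposition \ref{inj surj}, where the summands under consideration always meet $[m,M_j]$; your reading of $\mathcal{I}_j$ is likely what was intended, but it is not what is written, and a proof should flag the discrepancy rather than quietly alter the definition.)

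Your proof of (ii) is correct as far as it goes, but it is far heavier than the paper's one-liner and routes through (i) unnecessarily. Since $\mathcal{I}_j$ is a \emph{left} ideal, it annihilates the quotient $B/\mathcal{I}_jB$; so for $w\in\mathcal{I}_j$ and $a\in A$ one has $f(wa)=w\cdot f(a)=0$, hence $\mathcal{I}_jA\subseteq\ker f$ and $f$ factors through $A/\mathcal{I}_jA$ by the universal property of the quotient. This requires no decomposition of $A$ and $B$ into convex summands, no support computation, no appeal to (i), and indeed no $n$-Vee hypothesis on $P$.
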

\begin{proof}

(i) obvious.  Statement (ii) is clear, since for $f:A \to B/{\mathcal{I}}_j B$, $w  \in {\mathcal{I}}_j $, $f(w \cdot a) = w\cdot f(a) = 0$.
\end{proof}

Note that if $n=1$, the left ideal ${\mathcal{I}}_i$ is identically zero, but the above is still true.  We now prove Proposition \ref{inj surj}.
\begin{proof}
Let $A, C$ be as above.  For all $i$, let $A_i = A(P) x_i$, $x_i \in $ Supp$(A_i)$, and let $[x_i,X_i]$ be a maximal connected totally ordered subset of Supp$(A_i)$ (We do not suppose $x_i=m$).  Similarly, let $y_j$ be such that $C_j = A(P) y_j$.  For $i, j$ let $f^{i}_j: A_{i} \to C_j$.  Now suppose $A \xrightarrow{f} C$ is an injection.  Fix $i_0$ and $[x_{i_0},X_{i_0}]$ be maximal contained in Supp$(A_{i_0})$.  Since $f^{i_0} = (f^{i_0}_j): A_{i_0} \to \bigoplus_j C_j$ is an inclusion, for any $ t \in [x_{i_0},X_{i_0}] $, there exists $j(t)$ with $f^{i_0}_{j(t)} \neq 0$.  Since $f^{i_0}_{j(t)}$ is a homomorphism, $f^{i_0}_{j(t)} \neq 0 \implies  f^{i_0}_{j(t)}(x_{i_0}) \neq 0$, and it is not that case that there exists $\ell > X_{i_0}$, with $\ell \in $Supp$(C_{j_0})$.  Set $j_0 = j(X_{i_0})$. Therefore, 
$$\{j: [x_{i_0},X_{i_0}] \subseteq \textrm{Supp}(C_j) \textrm{ and for all } \ell, \ell > X_{i_0} \implies \ell \notin \textrm{ Supp}(C_j)\} \neq \phi.$$
Now, for $d \in P$, let \\
\begin{eqnarray*}
j(d) &=& \{j: [-,d] \subseteq \textrm{Supp}(C_j), \ell \notin \textrm{Supp}(C_j) \textrm{ for } \ell > d\}, and \\
i(d)&=& \{i: [-,d] \subseteq \textrm{Supp}(A_i), \ell \notin \textrm{Supp}(A_I) \textrm{ for } \ell > d\}.
\end{eqnarray*}
Clearly, $i(d) \neq \phi \implies j(d) \neq \phi$.  Now, let $d \in P$, $d \neq m$ with $i(d) \neq \phi$.  Say $d \in (m,M_k]$.  Then,
\begin{eqnarray*}
\bigoplus\limits_{i \in i(d)}A_i / {\mathcal{I}}_k A_i &\hookrightarrow & \bigoplus\limits_{\substack{j \in j(d')\\ d' \leq d}}C_j/ {\mathcal{I}}_k  C_j \hookrightarrow C /  {\mathcal{I}}_k  C\implies\\
\bigoplus\limits_{i \in i(d)}(A_i/ {\mathcal{I}}_k A)(d) &\hookrightarrow & \bigoplus\limits_{\substack{j \in j(d') \\d' \leq d}}(C_j/ {\mathcal{I}}_k C_j)(d) = \bigoplus\limits_{j \in j(d)}(C_j/ {\mathcal{I}}_k C_j)(d)
\end{eqnarray*}
where the above inclusions are induced from $f$ and the inclusion of a submodule into a larger module respectively.  Thus, $|i(d)| \leq |j(d)|$.  If $d = m$, then,
\begin{eqnarray*}
\bigoplus\limits_{i \in i(m)}A_i \hookrightarrow  \bigoplus\limits_{j \in j(m)}C_j\hookrightarrow C \implies \bigoplus\limits_{i \in i(m)}A_i (m)\hookrightarrow  \bigoplus\limits_{j \in j(m)}C_j(m) \hookrightarrow C(m) ,
\end{eqnarray*}
so $|i(m)| \leq |j(m)|$.  This proves (i). The proof of (ii) is similar, though one inducts on the the cardinality of $\mathcal{S} = \{b: [b,-] \textrm{ is a maximal totally ordered subset of }A_i \textrm{ for some } A_i\}.$

Now we prove (iii).  For a contradiction, suppose there exists an $I \in B(ker(\phi))$ with Hom$(I,I{\Lambda}^2) \neq 0$.  But then the diagram below commutes.

\begin{center}
\begin{tikzpicture}[commutative diagrams/every diagram]
	\matrix[matrix of math nodes, name=m, commutative diagrams/every cell,row sep=.7cm,column sep=.45cm] {
	 \pgftransformscale{0.2}
		I & & I{\Lambda}^2 \\
		& \bigoplus\limits_j C_j & \\ };
		
		\path[commutative diagrams/.cd, every arrow, every label]
			(m-1-1) edge node {} (m-1-3)
			(m-1-1) edge node[swap] {${\phi}_{| I}$} (m-2-2)
			(m-2-2) edge node[swap] {$\psi \Lambda$} (m-1-3);

\end{tikzpicture}
\end{center}
Thus, $\psi \Lambda \phi (I) \neq 0$, a contradiction.  This proves (iii).  \\
Now let $J \in B(cok(\phi))$.  For a contradiction, suppose $W(J) > h(\Lambda)$.  But then there exists  $[x,X]$ a maximal subinterval in Supp$(J)$ with $\Lambda^2 x \leq X$.  Let $\{ b_x + im(\phi), ... b_X + im(\phi)\}$ be the corresponding basis elements for $J$.  But then there exists $j$ such that 
\begin{enumerate}
\item $[x,X] \subseteq $Supp$(C_j \Lambda)$, and
\item $C_j \Lambda (y) \notin im(\phi)$, for $x\leq y \leq X$.

\end{enumerate}

Then, $\Lambda x, \Lambda X \in $Supp$(C_j) \implies {\Lambda}^2\Lambda x = \Lambda {\Lambda}^2 x \leq \Lambda X$ which is in the support of $C_j$.  Therefore $W(C_j) \geq h(\Lambda)$.  But then, the following diagram commutes.
\begin{center}
\begin{tikzpicture}[commutative diagrams/every diagram]
	\matrix[matrix of math nodes, name=m, commutative diagrams/every cell,row sep=.7cm,column sep=.45cm] {
	 \pgftransformscale{0.2}
		C_j(\Lambda x) & & (C_j{\Lambda}^2)(\Lambda x)= (C_j \Lambda)({\Lambda}^2 x) \\
		& J\Lambda (\Lambda x) & \\ };
		
		\path[commutative diagrams/.cd, every arrow, every label]
			(m-1-1) edge node {} (m-1-3)
			(m-1-1) edge node[swap] {} (m-2-2)
			(m-2-2) edge node[swap] {} (m-1-3);

\end{tikzpicture}
\end{center}
But then $(C_j {\Lambda}^2)(\Lambda x) \in im(\phi \Lambda)(\Lambda x)$, a contradiction.  This proves (iv) and finishes the proof.
\end{proof}
The Example below shows that (i),(ii) in the Proposition \ref{inj surj} cannot be extended from maximal totally ordered intervals to convex subsets.  
\begin{ex}
Consider the $2$-Vee $[m,M_1] \cup [m,M_2]$, where $m < x < M_1$ and $m < y < z < M_2$.  Let $C_1$ be the convex module supported on $\{m, x, M_1\}$, and $C_2$ be the convex module supported on $\{m, y, z, M_2\}$.  Say $C_1$ has basis $\{e_m, e_x, e_{M_1}\}$ and $C_2$ has basis $\{f_m, f_ y, f_z, f_{M_2}\}$.  Then the submodule of $C_1 \oplus C_2$ with basis $\{e_m + f_m, e_x, e_{M_1}, f_y, f_z, f_{M_2}\}$ is isomorphic to the convex module with full support.  Thus, let $A_1$ be the convex module with full support.  Then, $A_1 \hookrightarrow C_1 \oplus C_2$, and while one can make the claim in the Proposition for each maximal totally ordered subset of the support of $A_1$ separately, one cannot do so simultaneously.
\end{ex}

In the next section we study homomorphisms and translations and their properties in $\mathcal{C}$.

\section{Homomorphisms and Translations}
\label{homomorphisms and translations}
In this section we investigate the relationship between homomorphisms and translations in the category $\mathcal{C}$.  In the interest of generality, we will relax our hypotheses on the poset $P$.  In this section, unless otherwise specified, $P$ is any finite poset.  The functions defined in Definitions \ref{trimleft}, \ref{trimright} are analogues of functions used by Bauer and Lesnick \cite{induced_matchings}.  In this context, however, they fail to preserve $W$, and may annihilate a convex module.

Note that if $S \subseteq P$ is non-empty and interval convex, then it canonically determines the isomorphism class of an element of $\mathcal{C}$ under the identification;
$$S \to  \bigoplus M_i \textrm{, where Supp}( M_i) \textrm{ is the }i\textrm{th connected component of } S .$$

We use this in the definition below.
\begin{definition}
\label{trimleft}
Let $P$ be any finite poset and $M$ be convex.  Say $\textrm{Supp} (M) = \bigcup\limits_i [a_i,b_i]$, where $[a_i,b_i]$ are maximal intervals in $\textrm{Supp} (M)$, and let $\Gamma \in \mathcal{T}(P)$.  Then,
$M^{+ \Gamma} $ is the element of $\mathcal{C}$  given by Supp$(M^{+ \Gamma}) = S = \bigcup\limits_i [\Gamma a_i,b_i]$.  
\end{definition}

That is, $M^{+ \Gamma}$ is the direct sum determined by $S = \bigcup\limits_i [\Gamma a_i,b_i]$.
(Note that if $\Gamma a_i \nleq bi$, then $[\Gamma a_i, b_i]$ is empty.)  One easily checks that,
\begin{enumerate}[(i)]
\item If $P$ is an $n$-Vee, $M^{+\Gamma}$ is convex, or $0$, and
\item For a general poset $P$,  $M^{+\Gamma}$ is a submodule of $M$.
\end{enumerate}
Moreover, for $i \in P$,  
\begin{eqnarray*}
M^{+\Gamma}(i)=\sum_x im(M(x\leq\Gamma x\leq i))=im(M(x_0\leq\Gamma x_0\leq i)) \textrm{ for any } x_0 \leq \Gamma x_0 \leq i.
\end{eqnarray*}
That is, $\theta = {\theta}_i \in M^{+\Gamma}(i) \implies \theta \in im(M(x_0\leq\Gamma x_0\leq i)) \textrm{ for any } x_0 \leq \Gamma x_0 \leq i.$  Now, for $M \in \mathcal{C}$ arbitrary, set 
$$M^{+\Gamma} = \bigoplus\limits_t M_t^{+\Gamma} \textrm{, where } M = \bigoplus\limits_t M_t. $$
If $P$ has the property that for all $i \in P$, $(-\infty, i]$ is totally ordered, one can still find $x = x(i)$ such that $M^{+\Gamma}(i)=im(M(x\leq\Gamma x\leq i)) $
is still valid.  Thus, in particular, the result holds for as $n$-Vee.  Note that if $(-\infty,i]$ is not totally ordered, then  $M^{+\Gamma}(i)=\sum_x im(M(x\leq\Gamma x\leq i))  $.  When $P$ is an $n$-Vee, we now make a dual definition.
\begin{definition}
\label{trimright}
Let $P$ be an $n$-Vee and let $M$ be a convex module.  Say Supp$(M) = \bigcup\limits_i[x,X_i]$ where each $x \leq X_i \leq M_i$.  (Recall that since $P$ is an $n$-Vee, the support of each convex module has a minimal element.  So $X_i \neq x$ for more than one $i$ implies $x = m$.)  Let $\Gamma \in \mathcal{T}(P)$.  Then, $M^{-\Gamma}$ is the convex modules with
$$\textrm{Supp}(M^{-\Gamma}) \textrm{ equal to } \bigcup\limits_i [x,X_i^{N(i)}] \textrm{, where } X_i^{N(i)} = \textrm{ max }\{y: \Gamma y\leq X_i, y \geq x\}.$$
Note that if no such $X_i^{N(i)}$ exists, $M^{-\Gamma} = 0$\\
\end{definition}

One easily checks that,
\begin{enumerate}[(i)]
\item If $M$ is convex, $M^{-\Gamma}$ is either identically zero or convex, and
\item $M^{-\Gamma}$ is a quotient of $M$.
\end{enumerate}

Because of (i) we may extend our definition from $\Sigma$ to $\mathcal{C}$.  For $M \in \mathcal{C}$, set
$$M^{-\Gamma}=\bigoplus\limits_t {M_t}^{-\Gamma} \textrm{, where } M = \bigoplus\limits_t M_t .$$

Notice that the assignment $M \to M^{+\Gamma}$ moves the left endpoints of the support of a module to the right, while $M \to M^{-\Gamma}$ moves the right endpoints of the support to the left.  A physical characterization of $M^{-\Gamma}$ is possible, but will prove unecessary for our purposes.

We now prove a useful proposition.

\begin{prop}
\label{trim}
Let $P$ be an $n$-Vee, and let $I, M \in \mathcal{C}$.  Let $(\phi, \psi)$ be a $(\Lambda, \Lambda)$-interleaving betweem $I$ and $M$.  Say $\phi :I\rightarrow M\Lambda$.  Then, 
\begin{enumerate}[(i)]
\item $I^{-\Lambda^2}$ is a quotient of both $I$ and $im(\phi)$, and
\item $M^{+\Lambda^2}\Lambda$ is a submodule of both $M\Lambda$ and $im(\phi)$.
\end{enumerate}
\end{prop}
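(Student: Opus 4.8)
The plan is to treat each of (i) and (ii) as the conjunction of an immediate assertion coming from Definitions~\ref{trimleft} and~\ref{trimright} and a second assertion extracted from the two commuting triangles of the $(\Lambda,\Lambda)$-interleaving. Throughout, write $s^F_\Theta\colon F\to F\Theta$ for the canonical morphism with $(s^F_\Theta)_p=F(p\leq\Theta p)$; the horizontal arrows of the interleaving diagram are $s^I_{\Lambda^2}$ and $s^M_{\Lambda^2}$, so the hypothesis gives $\psi\Lambda\circ\phi=s^I_{\Lambda^2}$ and $\phi\Lambda\circ\psi=s^M_{\Lambda^2}$.

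For (i), that $I^{-\Lambda^2}$ is a quotient of $I$ is the property recorded just after Definition~\ref{trimright}. For the remaining claim I would first identify the kernel of the canonical surjection $I\twoheadrightarrow I^{-\Lambda^2}$ with $\ker s^I_{\Lambda^2}$: by Definition~\ref{trimright} the right--trim deletes exactly those vertices $p\in\textrm{Supp}(I)$ at which $\Lambda^2 p$ leaves the maximal totally ordered subinterval of $\textrm{Supp}(I)$ through $p$, i.e.\ exactly the vertices at which $I(p\leq\Lambda^2 p)=0$, and a submodule of a convex module is determined by its support. Since $\psi\Lambda\circ\phi=s^I_{\Lambda^2}$ forces $\ker\phi\subseteq\ker s^I_{\Lambda^2}$, the canonical surjection $I\twoheadrightarrow I^{-\Lambda^2}$ (whose kernel is $\ker s^I_{\Lambda^2}\supseteq\ker\phi$) factors through $I\twoheadrightarrow\operatorname{im}\phi$, exhibiting $I^{-\Lambda^2}$ as a quotient of $\operatorname{im}\phi$. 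Decomposing $I=\bigoplus_t I_t$ is harmless here, since $s^I_{\Lambda^2}$ and its kernel respect the direct sum.

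For (ii), that $M^{+\Lambda^2}\Lambda$ is a submodule of $M\Lambda$ follows from the property noted after Definition~\ref{trimleft} that $M^{+\Lambda^2}\subseteq M$ together with the exactness of the right action $(-)\Lambda$. For the remaining claim I would dualize the computation in (i). From $\phi\Lambda\circ\psi=s^M_{\Lambda^2}$ one gets $\operatorname{im}\!\big(M(x\leq\Lambda^2 x)\big)\subseteq\operatorname{im}(\phi_{\Lambda x})=(\operatorname{im}\phi)(\Lambda x)$, and, since $\operatorname{im}\phi$ is a submodule of $M\Lambda$, propagating along the structure map $(M\Lambda)(\Lambda x\leq k)=M(\Lambda^2 x\leq\Lambda k)$ of $\operatorname{im}\phi$ (and using path--independence in $A(P)$) gives $\operatorname{im}\!\big(M(x\leq\Lambda^2 x\leq\Lambda k)\big)\subseteq(\operatorname{im}\phi)(k)$ for every $x$ with $\Lambda x\leq k$. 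Comparing this with the formula $M^{+\Lambda^2}(\Lambda k)=\sum_x\operatorname{im}\!\big(M(x\leq\Lambda^2 x\leq\Lambda k)\big)$ from Definition~\ref{trimleft}, it then remains to see that every $x$ actually contributing to this sum satisfies $\Lambda x\leq k$; granting that, $(M^{+\Lambda^2}\Lambda)(k)=M^{+\Lambda^2}(\Lambda k)\subseteq(\operatorname{im}\phi)(k)$ for all $k$, i.e.\ $M^{+\Lambda^2}\Lambda\subseteq\operatorname{im}\phi$. The passage to a general (decomposable) $M$ is handled by the usual splitting of $\phi$ and $\psi$ along summands.

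The step I expect to be the real obstacle is the last one in (ii): a vertex $x$ contributing to $M^{+\Lambda^2}(\Lambda k)$ has $M(x)\neq 0$, $\Lambda k\in\textrm{Supp}(M)$ and $\Lambda^2 x\leq\Lambda k$, and one must deduce $\Lambda x\leq k$. This is where the $n$-Vee hypothesis is essential: on each branch $[m,M_i]$ the order is total, so $\Lambda$ is injective off the set where it is locally constant, which reduces the claim to the boundary case $\Lambda^2 x=\Lambda k$; disposing of that case, and of the behaviour at the branch point $m$ where two chains meet, is where one invokes the structure of $\mathcal{T}(P)$ from Lemma~\ref{T(P)} --- directedness without increase of height, and the explicit shape of the maximal translations $\Lambda_\epsilon$. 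Part (i) does not run into this, because there the analogous bookkeeping only concerns the maximal totally ordered subintervals of $\textrm{Supp}(I)$, on which the order is already linear.
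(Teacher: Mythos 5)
Your argument for part (i) is essentially the paper's: the paper observes directly that $\psi\Lambda\circ\phi=s^I_{\Lambda^2}$ implies $(\psi\Lambda)(\operatorname{im}\phi)=I^{-\Lambda^2}$, exhibiting $I^{-\Lambda^2}$ as a homomorphic image of $\operatorname{im}\phi$; your kernel-containment formulation $\ker\phi\subseteq\ker s^I_{\Lambda^2}$ is the same fact read the other way around, and is fine.

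Part (ii), however, takes a genuinely different route, and the step you yourself flag as the ``real obstacle'' is not a technicality that Lemma~\ref{T(P)} disposes of --- it is actually false, and the failure is precisely at the plateau of $\Lambda$ near the top of the poset. Concretely, take $P=[1,10]$ a $1$-Vee with weight $(1,5)$, let $\Lambda=\Lambda_3$ (so $\Lambda p=p+3$ for $p\le 6$ and $\Lambda p=10$ for $p\in[7,10]$), and let $M$ be the convex module with support $[5,10]$. Then $M^{+\Lambda^2}$ is supported on $\{10\}$, and for $k=7$ one has $\Lambda k=10$ so $(M^{+\Lambda^2}\Lambda)(7)\ne 0$; but the minimal element $a=5$ of $\mathrm{Supp}(M)$ has $\Lambda a=8>k=7$, and every $x\in\mathrm{Supp}(M)$ has $\Lambda x\ge 8>k$. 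So there is \emph{no} $x$ contributing to $M^{+\Lambda^2}(\Lambda k)$ with $\Lambda x\le k$, and your propagation argument produces nothing at $k=7$. Since the boundary case $\Lambda^2 x=\Lambda k$ with $\Lambda x>k$ exactly forces $\Lambda^2 x=\Lambda k=n$ by Lemma~\ref{hom4}(iii), this is not a degenerate edge case but the systematic failure mode of your reduction, and the directedness and total ordering of $\{\Lambda_\epsilon\}$ in Lemma~\ref{T(P)} give you no purchase on it.

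The paper's proof of (ii) avoids this entirely by not trying to chase the $x$'s at all. It works pointwise at a vertex $i$, writes the vector space $(M\Lambda)(i)$ as $(\operatorname{im}\phi)(i)\oplus_K(\mathrm{coker}\,\phi)(i)$, and then uses Proposition~\ref{inj surj}(iv) --- the bound $W(J)\le h(\Lambda)$ for every $J\in B(\mathrm{coker}\,\phi)$ --- to force the cokernel component of any element of $(M^{+\Lambda^2}\Lambda)(i)$ to vanish. That width bound on the cokernel, which is the real engine of the argument and the place where the interleaving hypothesis is exploited beyond the single triangle identity you used, is entirely absent from your proposal. If you want to keep your structure-map propagation, you would still need to import something equivalent to Proposition~\ref{inj surj}(iv) to handle the vertices where your $x_0$ does not exist; without it the argument has a genuine gap.
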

\begin{proof}
First, by the comments above, $I^{-\Lambda^2}$ is a quotient of $I$.  Now, since $I$ and $M$ are $(\Lambda,\Lambda)$-interleaved, $\psi\Lambda\circ\phi=(I \to I\Lambda^2)$. Therefore, $$(\psi\Lambda)(im(\phi))=I^{-\Lambda^2},$$ and hence $I^{-\Lambda^2}$ is a homomorphic image, and hence a quotient of $im(\phi)$.  This proves (i).

We now prove (ii).  First, already $M^{+\Gamma}$ is a submodule of $M$ . Moreover, $C\leq D$ implies $\tau$, $C\tau\leq D\tau$, for any $\tau \in \mathcal{T}(P)$.   Hence, $M^{+\Lambda^2}\Lambda$ is a submodule of $M\Lambda$.  It remains to show that $M^{+\Lambda^2}\Lambda$ is a submodule of $im(\phi)$. Let $i\in\mathcal{P}$, $i\in\mathrm{Supp}((M^{+\Lambda^2})\Lambda)$.  Then, as a vector spaces, 
$$((M^{+\Lambda^2})\Lambda)(i)\subset(im(\phi))(i)\oplus_K(coker(\phi))(i).$$

Let $\theta_i\in((M^{+\Lambda^2})\Lambda)(i)$. Note that $\theta_i=\theta'_{\Lambda i}$. At the $i$ level, $\theta_i=a_i+b_i$ with $a_i\in(im(\phi))(i)$ and $b_i\in(coker(\phi))(i)$.  Then,
\begin{eqnarray*}
&&{\theta_i} \in im((M(x\leq\Lambda^2x\leq\Lambda i)))\implies\\
&&\theta'_{\Lambda i} =M(x\leq\Lambda^2x\leq\Lambda i)(a_x+b_x), a_x \in im(\phi), b_x \in coker(\phi) \implies \\
&&M(x\leq\Lambda^2x\leq\Lambda i)(a_x)=a_i+\alpha_i, \textrm{ with }\alpha_i\in im(\phi), \textrm{ and }\\
&&M(x\leq\Lambda^2x\leq\Lambda i)(b_x)=-\alpha_i+b_i.
\end{eqnarray*}
But, by Proposition \ref{inj surj}, $W(coker(\phi))<h(\Lambda)$, thus $-\alpha_i+b_i=0$, and therefore $\alpha_i=b_i=0$. Hence, $\theta_i$ was fully contained in $(im(\phi))(i)$.

Thus, 
\begin{eqnarray*}
&&im((M(x\leq\Lambda^2x\leq\Lambda i)))\subset(im(\phi))(i) \textrm{ for all }i \textrm{, hence }\\
&&((M^{+\Lambda^2})\Lambda(i)\subset(im(\phi)))(i) \textrm{ for all }i.\\
\end{eqnarray*}
Therefore, $$(M^{+\Lambda^2})\Lambda\leq im(\phi).$$
This proves (ii).
\end{proof}

We will now consider the action of $\mathcal{T}(P)$ on $\mathcal{C} \cup \{0\}$.  We first point out that, in general, the monoid $\mathcal{T}(P)$ need not act on $\Sigma \cup \{0\}$.

\begin{ex}
\label{diamond}

Let $P$ be the poset E in Example \ref{Hasse}, and let $\Lambda$ be the translation $1 \to 1, 2 \to 4, 3 \to 4, 4 \to \infty, \infty \to \infty $.  Let $J$ be the convex module with support equal to $\{2, 3, 4\}$.  Then, $J \Lambda \cong S \oplus T$, where $S$ is the simple supported on $\{2\}$, and $T$ is the simple supported on $\{3\}$.  Alternatively, let $P$ be the poset $1, 2 \leq 3$, with $1, 2$ not comparable.  Let $J$ be the convex module with full support, and $\Lambda$ be given by $1 \to 1, 2 \to 2, 3 \to \infty, \infty \to \infty$.  Then, $J \Lambda$ is again a direct sum of two convex modules.
\end{ex}

Example \ref{diamond} shows that the action of $\mathcal{T}(P)$ on $\mathcal{C} \cup \{0\}$ need not restrict to $\Sigma \cup \{0\}$.  In Lemma \ref{hom1} we will see that when $P$ is an $n$-Vee, however, the action does restrict.  First, a quick observation.

\begin{lemma}
\label{unique min}
Let $P$ be any poset with a unique minimal element $m$, and suppose $\Lambda \in \mathcal{T}(P)$ with $\Lambda m = m$.  Then for all convex $J$ with $m \in $ Supp$(J)$, $J \Lambda$ is convex.  
\end{lemma}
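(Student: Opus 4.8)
The plan is to show that $J\Lambda$ is thin, that $\mathrm{Supp}(J\Lambda)$ is connected and interval convex, and that one may pick coordinates in which all of its structure maps equal $\mathrm{Id}_K$; by the characterization of convex modules and their supports in Subsection \ref{subconvex}, this exhibits $J\Lambda$ as (isomorphic to) the convex module on $\mathrm{Supp}(J\Lambda)$, hence as a convex module.

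First I would replace $J$ by the isomorphic copy supplied by Definition \ref{convex}, so that $J(x \leq y) = \mathrm{Id}_K$ for all $x \leq y$ in $S := \mathrm{Supp}(J)$. Since $(J\Lambda)(p) = J(\Lambda p)$ and $J$ is thin, $J\Lambda$ is thin, and $\mathrm{Supp}(J\Lambda) = \Lambda^{-1}(S) =: T$. Because $\Lambda m = m \in S$ we get $m \in T$, so $T \neq \emptyset$. Interval convexity of $T$ is immediate: if $p_1 \leq q \leq p_2$ with $p_1, p_2 \in T$, then $\Lambda p_1 \leq \Lambda q \leq \Lambda p_2$ since $\Lambda$ preserves order, and interval convexity of $S$ forces $\Lambda q \in S$, i.e. $q \in T$. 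For connectedness I would use that a finite poset with a unique minimal element has that element as a least element, so $m \leq p$ for every $p \in T$; then $[m,p] \subseteq T$ by interval convexity of $T$, and $[m,p]$ is connected (a maximal chain inside it is a path of cover relations of $P$ lying entirely in $[m,p]$), so every $p \in T$ is joined to $m$ by a path staying within $T$.

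It then remains to pin down the structure maps: for $p \leq q$ in $T$ we have $\Lambda p \leq \Lambda q$ in $S$, whence $(J\Lambda)(p \leq q) = J(\Lambda p \leq \Lambda q) = \mathrm{Id}_K$. Thus $J\Lambda$, restricted to its connected, interval convex, nonempty support $T$, has all structure maps equal to $\mathrm{Id}_K$, and this is exactly the convex module on $T$; in particular $J\Lambda$ is convex. The only step that is not purely formal is connectedness of $T$: thinness, interval convexity, and the identity structure maps all follow immediately from functoriality of the $\mathcal{T}(P)$-action together with $\Lambda m = m$, whereas connectedness is precisely where the hypotheses $m \in \mathrm{Supp}(J)$ and $m$ minimal in $P$ enter — without them $\mathrm{Supp}(J\Lambda)$ can split into several pieces, as in Example \ref{diamond}.
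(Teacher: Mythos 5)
Your proof follows the same route as the paper's: show $\mathrm{Supp}(J\Lambda)$ is interval convex by pulling back interval convexity of $\mathrm{Supp}(J)$ through the monotone map $\Lambda$, and deduce connectedness from the fact that $\Lambda m = m$ together with $m$ being a least element forces $[m,p] \subseteq \mathrm{Supp}(J\Lambda)$ for every $p$ in the support. The one thing you do more carefully than the paper is to explicitly verify that the structure maps of $J\Lambda$ are identities after normalizing $J$ — a step the paper leaves implicit but which is genuinely needed, since (as Example \ref{cohomology} shows) a thin module with connected, interval-convex support need not be convex for a general poset.
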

\begin{proof}
Let $P$ be as above, $M$ be convex, with $m \in $ Supp$(M)$.  Let $\Lambda$ be a translation with $\Lambda m = m$.  Clearly $M \Lambda$ is thin.  Let $t_1, t_2 $ be in the support of $M \Lambda$, and suppose $t_1 \leq t \leq t_2$.  Then, $\Lambda t_1, \Lambda t_2 \in $ Supp$(M) \implies [\Lambda t_1, \Lambda t_2] \subseteq $Supp$(M)$, since $M$ is convex.  Since $\Lambda t \in [\Lambda t_1, \Lambda t_2]$, $t$ is in the support of $M$, so $[t_1, t_2] \subseteq $ Supp$(M \Lambda)$.  Now, since $\Lambda m = m$, $m \leq x$ for all $x$, Supp$(M \Lambda)$ is connected.
\end{proof}

\begin{lemma}
\label{hom1}
Let $P$ be an $n$-Vee, $I$ a convex module and $\Lambda \in \mathcal{T}(P)$.  Then, $I \Lambda$ is either the zero module or convex.
\end{lemma}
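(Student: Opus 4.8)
The plan is to reduce the statement to a purely combinatorial claim about supports and then to verify two conditions. Since $(I\Lambda)(p)=I(\Lambda p)$, we have $S:=\textrm{Supp}(I\Lambda)=\Lambda^{-1}(\textrm{Supp}(I))=\{p\in P^+:\Lambda p\in\textrm{Supp}(I)\}$; because $\Lambda\infty=\infty$ and $\infty\notin\textrm{Supp}(I)$, the set $S$ lies in $P$, and $I\Lambda$ is thin. Choosing for $I$ the representative whose structure maps between support elements are all $\mathrm{Id}_K$, the same holds for $I\Lambda$: if $p\le q$ lie in $S$ then $\Lambda p\le\Lambda q$ lie in $\textrm{Supp}(I)$, so $(I\Lambda)(p\le q)=I(\Lambda p\le\Lambda q)=\mathrm{Id}_K$. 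By the characterization of supports of convex modules in Subsection~\ref{subconvex}, it then suffices to show that $S$ is either empty or connected and interval convex. Interval convexity is immediate from monotonicity: if $s_1\le p\le s_2$ with $s_1,s_2\in S$, then $\Lambda s_1\le\Lambda p\le\Lambda s_2$ with $\Lambda s_1,\Lambda s_2\in\textrm{Supp}(I)$, and interval convexity of $\textrm{Supp}(I)$ forces $\Lambda p\in\textrm{Supp}(I)$, i.e. $p\in S$.

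The remaining task, and the only place the $n$-Vee hypothesis is needed, is to prove that $S$ is connected when nonempty. Write $P=\bigcup_i[m,M_i]$, so that $P\setminus\{m\}=\bigsqcup_i(m,M_i]$ is the decomposition of the Hasse quiver of $P$ minus $m$ into its connected components. If $m\in S$, then for every $s\in S$ the chain $[m,s]$ lies in $S$ — indeed $m\le t\le s$ gives $\Lambda m\le\Lambda t\le\Lambda s$, and interval convexity of $\textrm{Supp}(I)$ puts $\Lambda t\in\textrm{Supp}(I)$ — so every element of $S$ is joined to $m$ within $S$ and $S$ is connected. If $m\notin S$, I claim that $S$ is contained in a single branch $(m,M_k]$; then $S$, being an interval-convex subset of a chain, is an interval, hence connected. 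To prove the claim I split into two cases. If $m\notin\textrm{Supp}(I)$, then $\textrm{Supp}(I)$ is connected and omits $m$, hence lies in one branch $(m,M_k]$; for $p\in S$ (necessarily $p\ne m$) we have $\Lambda p\in\textrm{Supp}(I)\subseteq(m,M_k]$, while $\Lambda p\ge p$ forces $\Lambda p\in(m,M_j]\cup\{\infty\}$ where $p\in(m,M_j]$, and since $\Lambda p\in P$ this gives $(m,M_j]\cap(m,M_k]\ne\varnothing$, so $p\in(m,M_j]=(m,M_k]$. If instead $m\in\textrm{Supp}(I)$, then $m\notin S$ forces $\Lambda m\ne m$; since $S\ne\varnothing$ we cannot have $\Lambda m=\infty$ (otherwise $\Lambda p=\infty$ for every $p\in P$ and $S=\varnothing$), so $\Lambda m\in(m,M_{i_0}]$ for some $i_0$, whence $\Lambda p\ge\Lambda m$ puts $\Lambda p\in(m,M_{i_0}]\cup\{\infty\}$ for all $p$; combining this with $\Lambda p\in\textrm{Supp}(I)\subseteq P$ and with $\Lambda p\ge p$ as before yields $p\in(m,M_{i_0}]$ for every $p\in S$, so $S\subseteq(m,M_{i_0}]$.

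I expect the main obstacle to be precisely this last case: when $m\in\textrm{Supp}(I)$ the support of $I$ may straddle several branches, and one must exploit the rigidity of translations of an $n$-Vee — a translation sending $m$ off of $m$ collapses all branches but one to $\infty$ — to prevent $S$ from straddling two branches. This is exactly the phenomenon that fails for general posets, as Example~\ref{diamond} illustrates. Everything else is routine monotonicity bookkeeping; note moreover that the case $\Lambda m=m$ with $m\in\textrm{Supp}(I)$ is already handled by Lemma~\ref{unique min}, so one may alternatively invoke that lemma and only treat the cases ($\Lambda m=m$, $m\notin\textrm{Supp}(I)$) and ($\Lambda m\ne m$) via the branch argument above.
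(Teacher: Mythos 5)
Your proof is correct and takes essentially the same route as the paper: both establish thinness and interval convexity first (the paper does this by pointing to the proof of Lemma~\ref{unique min}), and both obtain connectedness by the same dichotomy — either $m$ lies in the relevant support and every point is joined to $m$, or the rigidity of translations on an $n$-Vee forces the support of $I\Lambda$ into a single totally ordered branch $(m,M_k]$. The only cosmetic difference is that you case on whether $m\in\textrm{Supp}(I\Lambda)$ whereas the paper cases on whether $m\in\textrm{Supp}(I)$ and on whether $\Lambda m=m$.
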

\begin{proof}
First, from the proof of Lemma \ref{unique min}, if non-zero $I \Lambda$ is in $\mathcal{C}$.  We now proceed in cases.  First, suppose $m \in $ Supp$(I)$.  If $\Lambda m = m$, then $m $ is in the support of $I \Lambda$, so $I \Lambda $ is convex.  On the other hand, if $\Lambda m \in (m,M_i]$, then for all $j  \neq i, (m,M_j] \cap $ Supp$(I \Lambda) = \phi$.  But then Supp$(I \Lambda) \subseteq [m, M_i]$, hence it is convex or zero, since it is interval convex.   If $m \notin $ Supp$(I)$, then $I$ is supported in $(m, M_j]$ for some $j$ and the result follows.

\end{proof}
  
We will now work towards the characterization of homomorphism between convex modules when $P$ is an $n$-Vee.  In the interest of generality, we begin with an arbitrary finite poset $P$.  
\begin{definition}
\label{capital phi}
Let $I, M$ be convex.  Let $\{e_x: x \in \textrm{Supp}(I)\}$, $\{f_x: x \in \textrm{Supp}(M)\}$ be $K$-bases for $I, M$ respectively.  Consider the linear function ${\Phi}_{I,M}$, defined by 
\begin{center}
${\Phi}_{I,M}(e_y) =
\begin{cases}
f_y, \textrm{ if } y \in \textrm{Supp}(I) \cap \textrm{Supp}(M)\\
0 \textrm{ otherwise.}

\end{cases}$
\end{center}
\end{definition}

By inspection, ${\Phi}_{I,M}$ is a non-zero module homomorphism if and only if Supp$(I) \cap \textrm{Supp}(M)$ satisfies, 
\begin{enumerate}[(i)]
\item Supp$(I) \cap \textrm{ Supp}(M) \neq \phi$
\item $x \in $ Supp$(I) \cap $ Supp$(M)$, $ y \geq x, y \in \textrm{ Supp}(M) \implies y \in \textrm{ Supp}(I)$, and 
\item $x \in $ Supp$(I) \cap $ Supp$(M)$, $ y \leq x, y \in \textrm{ Supp}(I) \implies y \in \textrm{ Supp}(M)$.
\end{enumerate}
 
Note that even when it is not a module homomorphsim, ${\Phi}_{I,M}$ can be viewed as the linear extension of ${\chi}(\textrm{Supp}(I) \cap \textrm{Supp}(M))$, the characteristic function on the intersection of the supports of $I$ and $M$.  

The following two lemmas will allow us to conclude that when $P$ is an $n$-Vee, up to a $K$-scalar, this is the only possible module homomorphisms from $I$ to $M$.

\begin{lemma}
\label{hom1.5}
Let $P$ be any finite poset, and let $I, M$ be convex.  Let $S  \subseteq \textrm{Supp}(I) \cap \textrm{Supp}(M)$, with $S$ nonempty.  Suppose that there exists an $N \in \mathcal{C}$ with Supp$(N) = S$.  Then, $N$ is isomorphic to the image of a non-zero module homomorphsim from $I$ to $M$ if and only if
\begin{enumerate}[(a)]
\item for all $x \in S$, if $y \in \textrm{Supp}(I)$ with $y \leq x$, then $y \in S$, and
\item for all $x \in S$, if $y \in \textrm{Supp}(M)$ with $y \geq x$, then $y \in S$.
\end{enumerate} 
\end{lemma}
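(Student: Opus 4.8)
The plan is to reduce both implications of the lemma to a single structural fact about convex modules, which I would record first as a short preliminary claim: for a convex module $C$ with $\textrm{Supp}(C) = T$, take (as in Definition \ref{convex}) the representative whose structure maps are identities, so that $C(t) = K$ for $t \in T$, $C(t) = 0$ otherwise, and $C(y \leq z) = \mathrm{Id}_K$ whenever $y \leq z$ with $y, z \in T$. Since $A(P)$ is generated by the vertices and arrows of $Q_P$, a subrepresentation of $C$ is just a choice of $0$ or $K$ at each vertex of $T$ that is closed under the arrow action, and because the structure maps involved are identities this is closed exactly when the chosen support is an up-set of $T$. Thus subrepresentations of $C$ correspond, via support, to up-sets of $T$; dually, the supports of quotients of $C$ — equivalently of images of homomorphisms out of $C$ — are exactly the down-sets of $T$.

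For the forward direction I would start from a nonzero homomorphism $\phi \colon I \to M$ with $\textrm{im}(\phi) \cong N$, so that $\textrm{Supp}(\textrm{im}(\phi)) = S$ since support is an isomorphism invariant. As $\textrm{im}(\phi)$ is a subrepresentation of the convex module $M$, the preliminary claim forces $S$ to be an up-set of $\textrm{Supp}(M)$, which is exactly condition (b). As $\textrm{im}(\phi) \cong I/\ker(\phi)$ is a quotient of the convex module $I$, the dual statement forces $S = \textrm{Supp}(I) \setminus \textrm{Supp}(\ker(\phi))$ to be a down-set of $\textrm{Supp}(I)$, which is exactly condition (a).

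For the reverse direction, assuming (a) and (b), I would construct $\phi$ as a composite $I \twoheadrightarrow N \hookrightarrow M$, where $N$ is the element of $\mathcal{C}$ attached to $S$ by the identification preceding Definition \ref{trimleft}. Condition (b) says $S$ is an up-set of $\textrm{Supp}(M)$, so the preliminary claim yields a subrepresentation $M' \leq M$ supported on $S$; its structure maps are restrictions of identities, so $M' \cong N$, giving the injection $N \hookrightarrow M$. Condition (a) says $\textrm{Supp}(I) \setminus S$ is an up-set of $\textrm{Supp}(I)$, hence the support of a subrepresentation $I' \leq I$, and $I/I'$ is thin with support $S$ and identity structure maps, hence $\cong N$, giving the surjection $I \twoheadrightarrow N$. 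The composite is nonzero since $S \neq \varnothing$ and has image isomorphic to $N$.

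I expect the only real subtlety to be the meaning of $N$: the phrase "there exists $N \in \mathcal{C}$ with $\textrm{Supp}(N) = S$" should be read as designating the canonical convex-type module determined by $S$ (a direct sum of convex modules, one per connected component of $S$), and one should note in passing that condition (a) together with the interval convexity of $\textrm{Supp}(I)$ already forces $S$ itself to be interval convex, so that this module is well-defined and unique. Beyond that, the argument is pure bookkeeping, everything flowing from the classification of subrepresentations of a convex module by up-sets of its support.
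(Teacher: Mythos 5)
Your proposal is correct and follows essentially the same line as the paper's own proof: both rest on the observation that supports of quotients of a convex module are exactly the down-sets of its support, supports of submodules are exactly the up-sets, and a homomorphism factors as a surjection onto its image followed by an inclusion. You simply make explicit (via the up-set/down-set dictionary for thin representations with identity structure maps) what the paper states in one line.
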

\begin{proof}
$S$ corresponds to the support of a non-zero quotient module of $I$ if and only if $S$ satisfies (a).  Similarly, $S$ corresponds to a non-zero submodule of $M$ if and only if $S$ satisfies (b).  Since any homomorphism can be factored into an injection after a surjection, the result follows.
\end{proof}
 
\begin{lemma}
\label{hom2}
Let $P$ be an $n$-Vee.  Let $I, M$ be convex modules.  Then, Hom(I,M) $\cong K$ or $0$ (as a vector space)

\end{lemma}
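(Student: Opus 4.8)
The plan is to prove that every convex module $I$ over an $n$-Vee is \emph{cyclic}, generated by a single basis vector sitting at the unique minimal vertex of $\mathrm{Supp}(I)$; the bound $\dim_K \Hom(I,M) \leq 1$ is then immediate from the thinness of $M$.

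First I would normalise the setup: by Definition \ref{convex} we may assume that every structure map of $I$ between vertices of $\mathrm{Supp}(I)$ is $\mathrm{Id}_K$, and we fix the associated $K$-basis $\{e_y : y \in \mathrm{Supp}(I)\}$ with $e_y$ a generator of $I(y) \cong K$. Since $P$ is an $n$-Vee, Remark \ref{minimal} supplies a unique minimal element $x_0$ of $\mathrm{Supp}(I)$, and I would then check that in fact $x_0 \leq y$ for \emph{every} $y \in \mathrm{Supp}(I)$: the Hasse quiver of an $n$-Vee is a tree branching only at $m$, so if some $z \in \mathrm{Supp}(I)$ were incomparable to $x_0$, the unoriented path inside $\mathrm{Supp}(I)$ joining $z$ to $x_0$ (which exists by connectedness, cf.\ Subsection \ref{subconvex}) would have to pass through $m$, forcing $m \in \mathrm{Supp}(I)$ and producing a second minimal vertex, a contradiction. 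Hence for each $y \in \mathrm{Supp}(I)$ the directed path $p$ from $x_0$ to $y$ in $Q_P$ satisfies $p \cdot e_{x_0} = I(x_0 \leq y)(e_{x_0}) = e_y$, while paths out of $x_0$ ending outside $\mathrm{Supp}(I)$ act by $0$; therefore $I = A(P)\,e_{x_0}$.

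Granting this, any $f \in \Hom(I,M)$ is determined by $f(e_{x_0}) \in M(x_0)$, since $f(a\,e_{x_0}) = a\,f(e_{x_0})$ for all $a \in A(P)$; thus $f \mapsto f(e_{x_0})$ embeds $\Hom(I,M)$ $K$-linearly into $M(x_0)$, which has dimension $\leq 1$ because $M$ is thin (dimension $1$ exactly when $x_0 \in \mathrm{Supp}(M)$, and $0$ otherwise). This gives $\dim_K \Hom(I,M) \leq 1$, i.e.\ $\Hom(I,M) \cong K$ or $0$ as a vector space. For completeness I would also record that when the Hom space is nonzero its generator is a scalar multiple of the map $\Phi_{I,M}$ of Definition \ref{capital phi} (equivalently, by Lemma \ref{hom1.5}, this is the regime where conditions (i)--(iii) following that definition hold). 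The only real content is the cyclicity claim, so I do not anticipate a serious obstacle; the delicate point is the step showing that every vertex of $\mathrm{Supp}(I)$ dominates $x_0$, which genuinely uses the $n$-Vee hypothesis and fails in general — for instance, for the poset $E$ of Example \ref{Hasse} (cf.\ Example \ref{diamond}) the convex module supported on the three non-minimal vertices has two minimal vertices and is not cyclic.
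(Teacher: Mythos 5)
Your argument is correct but takes a genuinely different route from the paper's. You show that a convex module $I$ over an $n$-Vee is cyclic, generated at the unique minimal vertex $x_0$ of $\mathrm{Supp}(I)$ (after verifying that $x_0 \leq y$ for every $y \in \mathrm{Supp}(I)$, which is the one step that truly uses the $n$-Vee hypothesis), so that evaluation $f \mapsto f(e_{x_0})$ embeds $\Hom(I,M)$ into the at-most-one-dimensional space $M(x_0)$. The paper instead applies Lemma \ref{hom1.5} to analyze the support $S$ of the image of a nonzero homomorphism, proves $S$ is a union of connected components of $\mathrm{Supp}(I) \cap \mathrm{Supp}(M)$, and then uses connectedness of that intersection (again the $n$-Vee hypothesis) to conclude $S$ is the full intersection and that $g$ is a scalar multiple of $\Phi_{I,M}$. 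Your route is shorter and more elementary; the paper's route pays off by directly yielding the more general observation recorded just after the lemma, namely that for an \emph{arbitrary} finite poset $\dim_K \Hom(I,M)$ equals the number of connected components of $\mathrm{Supp}(I) \cap \mathrm{Supp}(M)$, and by identifying the generator as $\Phi_{I,M}$ as part of the proof rather than as an afterthought. One small wording quibble: in the incomparability argument, "producing a second minimal vertex" is slightly off — if the unoriented path forces $m \in \mathrm{Supp}(I)$, then $m < x_0$ shows $x_0$ was never minimal in $\mathrm{Supp}(I)$; the contradiction is with the minimality of $x_0$, not with uniqueness. The logical content is right, just phrase it as contradicting minimality of $x_0$.
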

\begin{proof}
First, let $P$ be any finite poset, and $I, M$ be convex.  Suppose that $g$ is any non-zero homomorpism from $I$ to $M$.  Then, by Lemma \ref{hom1.5},  $im(g) = I/ker(g)$ has support equal to $S \subseteq \textrm{Supp}(I) \cap \textrm{Supp}(M)$ satisfying (a), (b) from Lemma \ref{hom1.5}.  We will show that any such $S$ is a union of connected components of $\textrm{Supp}(I) \cap \textrm{Supp}(M)$.  Since $g$ is non-zero, $S$ is non-empty.  Now, let $s \in S$ and suppose that $y \in \textrm{Supp}(I) \cap \textrm{Supp}(M)$ with $y \geq s$.  Then, by (b), $y \in S$.  Similarly, if $y \in \textrm{Supp}(I) \cap \textrm{Supp}(M)$ with $y \leq s$, then by (a), $y \in S$.  Therefore $S$ contains the connected component of $s$ in $\textrm{Supp}(I) \cap \textrm{Supp}(M)$.  The result follows.

Now, if $P$ is an $n$-Vee, $\textrm{Supp}(I) \cap \textrm{Supp}(M)$ is connected, so $S$ must be the full intersection.  As above, let $\{e_x: x \in \textrm{Supp}(I)\}$, $\{f_x: x \in \textrm{Supp}(M)\}$ be $K$ bases for $I, M$ respectively.   

Then,
\begin{center}
$g(e_z) =
\begin{cases}
c_z f_z, \textrm{ if } z \in \textrm{Supp}(I) \cap \textrm{Supp}(M), c_z \in K\\
0 \textrm{ otherwise,}

\end{cases}$
\end{center}
 where Supp$(I)$, Supp$(M)$ satisfy conditions (i), (ii) and (iii) from below Definition \ref{capital phi}.  Clearly, every non-zero quotient of $I$ must have support containing the minimal element $t$ of Supp$(I) $.  Then, since $I, M$ are convex, 
$$g(I(t \leq y))e_t = g e_y = M(t \leq y)g e_t = M(t \leq y)c_t f_t = c_t f_y = c_y f_y.$$  Therefore, $g = c_t {\Phi}_{I,M}$.  Of course, if $g$ is identically zero, g is still in the span of ${\Phi}_{I,M}$.
\end{proof}

We now investigate the action of $\mathcal{T}(P)$ on Hom$(I,M)$, when $I, M$ are convex.  From the observation in the proof of Lemma \ref{hom2}, we see that for $P$ arbitrary, Hom$(I,M)$ will have dimension equal to the number of connected components of Supp$(I) \cap \textrm{Supp}(M)$.  Still, for a fixed translation $\Lambda \in \mathcal{T}(P)$, Hom$(I\Lambda,M\Lambda)$ may be trivial (even when $I\Lambda , M\Lambda$ are non-zero).  We now state a condition which ensures that Hom$(I,M)\cdot \Lambda \neq 0$.  This can be done more generally, but we state the result only for $P$ an $n$-Vee.

\begin{lemma}
\label{hom3}
Let $P$ be an $n$-Vee, and let $I, M$ be convex.  Let $\Lambda \in \mathcal{T}(P)$.  Say Hom(I,M) $\neq 0$ and there exists $t$ with $\Lambda t \in Supp(I) \cap Supp(M)$.  Then Hom$(I\Lambda,M\Lambda) \neq 0$.
\end{lemma}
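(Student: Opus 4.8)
The plan is to produce an explicit non-zero element of $\Hom(I\Lambda, M\Lambda)$, namely the translate $\Phi_{I,M}\Lambda$ of the canonical homomorphism $\Phi_{I,M}$ of Definition \ref{capital phi}. First I would invoke Lemma \ref{hom2}: since $\Hom(I,M)\neq 0$, the map $\Phi_{I,M}$ is (up to a non-zero scalar) the unique non-zero homomorphism $I\to M$, so in particular it is a genuine module homomorphism, whose component $\Phi_{I,M}(x)\colon I(x)\to M(x)$ is the map $e_x\mapsto f_x$ when $x\in\textrm{Supp}(I)\cap\textrm{Supp}(M)$ and is zero otherwise.

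Next I would use that $\mathcal{T}(P)$ acts on morphisms of $\mathcal{D}^P$ by acting inside the argument, so that $\Phi_{I,M}\Lambda\colon I\Lambda\to M\Lambda$ is again a module homomorphism, with component at $p\in P$ given by $(\Phi_{I,M}\Lambda)(p)=\Phi_{I,M}(\Lambda p)\colon I(\Lambda p)\to M(\Lambda p)$. (If one prefers not to cite this action abstractly, the commuting squares witnessing that $\Phi_{I,M}$ is a morphism along the relations $\Lambda x\le\Lambda y$, for an arrow $a\colon x\to y$ of $Q_P$, are precisely the squares needed for $\Phi_{I,M}\Lambda$ to be a morphism, since $\Lambda$ is order preserving and $\Phi_{I,M}$ intertwines all the structure maps of $I$ and $M$, not only those along Hasse arrows.)

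Finally, by the description of its components, $(\Phi_{I,M}\Lambda)(p)$ is non-zero exactly when $\Lambda p\in\textrm{Supp}(I)\cap\textrm{Supp}(M)$. The hypothesis supplies a $t$ with $\Lambda t\in\textrm{Supp}(I)\cap\textrm{Supp}(M)$, so the component of $\Phi_{I,M}\Lambda$ at $t$ is non-zero, whence $\Phi_{I,M}\Lambda\neq 0$; this is the desired non-zero element of $\Hom(I\Lambda,M\Lambda)$. (In passing, $\Phi_{I,M}\Lambda\neq 0$ forces both $I\Lambda$ and $M\Lambda$ to be non-zero, consistent with Lemma \ref{hom1}.)

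I do not expect a serious obstacle here: the whole content is the identification $(\Phi_{I,M}\Lambda)(p)=\Phi_{I,M}(\Lambda p)$ together with the fact that translating a morphism yields a morphism. The only step deserving a moment's care is confirming that $\Phi_{I,M}\Lambda$ really is a module homomorphism and not merely a collection of linear maps, which follows from the general behavior of the $\mathcal{T}(P)$-action on morphisms already used for interleavings earlier in the paper.
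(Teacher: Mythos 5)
Your proof is correct, and it takes a cleaner, more functorial route than the paper's. The paper proceeds by re-verifying the combinatorial conditions (i)--(iii) below Definition~\ref{capital phi} directly for the pair $\mathrm{Supp}(I\Lambda)$, $\mathrm{Supp}(M\Lambda)$, checking that each condition transfers along $\Lambda$ because $\Lambda$ is order-preserving; this shows $\Phi_{I\Lambda,M\Lambda}$ is a nonzero homomorphism. You instead observe that the nonzero morphism $\Phi_{I,M}\colon I\to M$ (which exists because $\Hom(I,M)\neq 0$ and Lemma~\ref{hom2} says it is, up to scalar, the only candidate) can simply be translated: the $\mathcal{T}(P)$-action on morphisms sends it to a bona fide morphism $\Phi_{I,M}\Lambda\colon I\Lambda\to M\Lambda$, whose component at $p$ is $\Phi_{I,M}(\Lambda p)$, nonzero at $p=t$ by hypothesis. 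Since $\Phi_{I,M}\Lambda=\Phi_{I\Lambda,M\Lambda}$ (an identity the paper itself invokes in the sketch of Proposition~\ref{variety}), the two arguments produce the same witness; yours buys brevity by outsourcing the "still a homomorphism" check to the already-established translation action on morphisms, whereas the paper's version redoes that verification in the specific combinatorial language of supports, which makes explicit where order-preservation of $\Lambda$ is used. Both are valid, and both genuinely use Lemma~\ref{hom2} (hence the $n$-Vee hypothesis) to ensure the canonical map $\Phi_{I,M}$ is in fact the whole of $\Hom(I,M)$.
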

\begin{proof}
Since $\Lambda t \in Supp(I) \cap Supp(M) $, $I \Lambda$ and $M\Lambda$ are not zero.  Also, by Lemma \ref{hom2}, Supp$(I)$ and Supp$(M)$ satisfy
the conditions (i), (ii) and (iii) from below Definiton \ref{capital phi}.  Since $I \Lambda, M\Lambda$ are convex, it is enough to show that the above still holds for $\textrm{Supp}(I\Lambda) \textrm{ and }\textrm{Supp}(M\Lambda)$.  Again, $t \in  \textrm{Supp}(I\Lambda) \cap \textrm{Supp}(M\Lambda)$, hence the intersection is nonempty.  Now let $z \in \textrm{Supp}(I\Lambda) \cap \textrm{Supp}(M\Lambda)$, with $w \in \textrm{Supp}(M\Lambda), w \geq z$.  Then, $\Lambda z \in \textrm{Supp}(I) \cap \textrm{Supp}(M), \Lambda w \in \textrm{Supp}(M)$, and $\Lambda w \geq \Lambda z$.  Therefore, $\Lambda z \in \textrm{Supp}(I)$, so $z \in $ Supp$(I \Lambda)$.  The last requirement is proved similarly.
\end{proof}

Note that conditions (ii) and (iii) are clearly inherited from $I, M$.  The authors point out that the hypothesis above that the intersection of supports coincides with the image of the translation is required even on a totally ordered set (see Example \ref{image of translation} below).
\begin{ex}
\label{image of translation}
Let $P$ be the totally ordered set $\{1, 2, 3, 4, 5, 6\}$ with its standard ordering, and let $\Lambda$ send $1 \to 2, 2 \to 3, 3 \to 3, 4 \to 5, 5 \to 6, 6 \to 6$.  Let $I$ and $M$ be the convex modules supported on $\{4, 5, 6\}$ and $\{3, 4\}$ respectively.  Note that Hom$(I,M) \neq $, $I\Lambda$ and $M\Lambda$ are supported on $\{4, 5\}$ and $\{2, 3\}$ respectively.  Clearly, the supports of $I \Lambda$ and $M \Lambda$ are disjoint, so Hom$(I\Lambda,M\Lambda) =0$.
\end{ex}

\begin{notation}
Let $P$ be an $n$-Vee, $\Lambda \in \mathcal{T}(P)$ and say $I$ is convex.  We write ${\Phi}_I^{\Lambda}$ for ${\Phi}_{I,I\Lambda}$, as $I\Lambda$ is either zero or convex.  For $I \in \mathcal{C}$, we write ${\Phi}_I^{\Lambda}$ for the canonical homomorphism as well, since it is necessarily diagonal.  (Of course, as mentioned above, even if $I \Lambda$ is not trivial, it may be the case that ${\Phi}_I^{\Lambda}$is identically zero.)
\end{notation}
We can now show that when $P$ is an $n$-Vee the collection of interleavings between two elements of $\mathcal{C}$ will have the structure of an affine variety (not necessarily irreducible).  Though the result still holds for more general posets, our proof is an application of the results of this section.  Some examples are provided in Section \ref{section ex}

\begin{prop}
\label{variety}
Let $P$ be an $n$-Vee and let $I = \bigoplus I_s, M = \bigoplus M_t$ be two elements of $\mathcal{C}$.  Let $\Lambda, \Gamma \in \mathcal{T}(P)$.  Then the collection of $(\Lambda, \Gamma)$-interleavings between $I$ and $M$ has the structure of an affine variety. 
\end{prop}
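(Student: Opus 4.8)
The plan is to exhibit the collection of $(\Lambda,\Gamma)$-interleavings as the common zero locus of finitely many polynomials on an affine space whose coordinates record the two morphisms of the interleaving, using the explicit ``diagonal'' description of homomorphisms between convex modules established in this section.

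First I would fix notation: a $(\Lambda,\Gamma)$-interleaving between $I$ and $M$ is a pair $(\phi,\psi)$ with $\phi\in\Hom(I,M\Gamma)$ and $\psi\in\Hom(M,I\Lambda)$ such that $\psi\Gamma\circ\phi$ equals the canonical morphism $\iota_I\colon I\to I\Lambda\Gamma$, which at $p$ is $I(p\leq\Lambda\Gamma p)$, and $\phi\Lambda\circ\psi$ equals the canonical morphism $\iota_M\colon M\to M\Gamma\Lambda$. By Lemma~\ref{hom1} each summand $M_t\Gamma$ and $I_s\Lambda$ is zero or convex, so $M\Gamma,I\Lambda\in\mathcal{C}$; by the compatibility of $\Hom$ with direct sums from Subsection~\ref{algebra} together with Lemma~\ref{hom2}, $\Hom(I,M\Gamma)\cong\bigoplus_{s,t}\Hom(I_s,M_t\Gamma)$ with each summand equal to $K\cdot\Phi_{I_s,M_t\Gamma}$ or $0$, and likewise $\Hom(M,I\Lambda)\cong\bigoplus_{t,s}\Hom(M_t,I_s\Lambda)$. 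Writing $c_{s,t}$ (resp.\ $d_{t,s}$) for the scalar on the one–dimensional piece $\Hom(I_s,M_t\Gamma)$ (resp.\ $\Hom(M_t,I_s\Lambda)$) identifies the set of all pairs $(\phi,\psi)$, before imposing the interleaving conditions, with an affine space $\mathbb{A}^{N}_{K}$.

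Next I would transcribe the two interleaving equations into these coordinates. Because the decompositions are respected, the $(s,u)$-block of $\psi\Gamma\circ\phi$ is $\sum_t (\psi^{t}_{u})\Gamma\circ\phi^{s}_{t}\colon I_s\to I_u\Lambda\Gamma$. Each $\phi^{s}_{t}=c_{s,t}\Phi_{I_s,M_t\Gamma}$ is diagonal in the standard bases; applying a translation to a diagonal homomorphism yields a diagonal homomorphism of the translated modules, using $\operatorname{Supp}(I_s\Gamma)=\{p:\Gamma p\in\operatorname{Supp}(I_s)\}$, so $(\psi^{t}_{u})\Gamma=d_{t,u}\Phi_{M_t\Gamma,\,I_u\Lambda\Gamma}$, and the composite $\Phi_{M_t\Gamma,\,I_u\Lambda\Gamma}\circ\Phi_{I_s,M_t\Gamma}$ sends the basis vector $e_x$ (for $x\in\operatorname{Supp}(I_s)$) to the corresponding basis vector precisely when $x\in\operatorname{Supp}(I_s)\cap\operatorname{Supp}(M_t\Gamma)\cap\operatorname{Supp}(I_u\Lambda\Gamma)$ and to $0$ otherwise. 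Hence, evaluated on $e_x$, the $(s,u)$-block of $\psi\Gamma\circ\phi$ is $\bigl(\sum_t\chi^{s,t,u}_{x}\,c_{s,t}d_{t,u}\bigr)$ times a basis vector, where $\chi^{s,t,u}_{x}\in\{0,1\}$ is the indicator of that triple intersection of supports. On the other side, $\iota_I$ is block diagonal with $(s,u)$-block vanishing for $s\neq u$ and, for $s=u$, sending $e_x\mapsto e_{\Lambda\Gamma x}$ when $\Lambda\Gamma x\in\operatorname{Supp}(I_s)$ and to $0$ otherwise. Equating coefficients of basis vectors produces, for each pair $(s,u)$ and each $x\in\operatorname{Supp}(I_s)$, one equation $\sum_t\chi^{s,t,u}_{x}\,c_{s,t}d_{t,u}=\varepsilon^{s,u}_{x}$ with $\varepsilon^{s,u}_{x}\in\{0,1\}$ explicitly determined by the supports of the $I_s, M_t$ and by $\Lambda,\Gamma$. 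The condition $\phi\Lambda\circ\psi=\iota_M$ yields an analogous finite system of bilinear equations in the same unknowns $c_{s,t},d_{t,s}$.

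Finally, the collection of $(\Lambda,\Gamma)$-interleavings between $I$ and $M$ is exactly the common solution set in $\mathbb{A}^{N}_{K}$ of this finite system of polynomial (indeed bilinear) equations, hence an affine variety, not necessarily irreducible. I expect the only real work to be in the middle step: verifying that the $\mathcal{T}(P)$-action commutes with the block decomposition of homomorphisms and carries each canonical diagonal map $\Phi_{I_s,M_t\Gamma}$ to the canonical diagonal map between the translated convex modules, allowing for the degenerate case $M_t\Gamma=0$, and that the composite of two such $\Phi$'s is governed by the intersection of the three relevant supports; granting these identifications, which follow by unwinding Definitions~\ref{support} and~\ref{capital phi}, the remainder is bookkeeping. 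As a sanity check, one can also bypass explicit coordinates entirely: $\Hom(I,M\Gamma)$ is a $K$-linear subspace of the finite–dimensional space $\bigoplus_{p\in P}\Hom_K(I(p),(M\Gamma)(p))$, so the pairs $(\phi,\psi)$ already form an affine space, and every matrix entry of a composite of linear maps is bilinear in the entries of the factors, so the two interleaving equalities are polynomial.
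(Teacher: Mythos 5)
Your proposal is correct and follows essentially the same route as the paper: decompose $\phi,\psi$ into their block components, invoke Lemma~\ref{hom2} to identify each nonzero block with a scalar multiple of the canonical diagonal map $\Phi$, and read off the interleaving conditions as a finite system of quadratic (bilinear) equations in those scalars obtained by evaluating at the elements of $P$. Your closing ``sanity check'' observation — that since $\Hom(I,M\Gamma)$ and $\Hom(M,I\Lambda)$ are finite-dimensional vector spaces and composition of linear maps is bilinear, the interleaving conditions are automatically polynomial — is a clean, convexity-free shortcut to the bare existence of the variety structure, though the paper's explicit coordinate description is what makes the computations in the later examples tractable.
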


Indeed, as stated above the result holds for any finite poset, though when $P$ is an $n$-Vee the variety has a simpler description.  We sketch the proof.  Let $P, I, M, \Lambda$ be as above, and let $\phi, \psi$ be any interleaving between $I$ and $M$.  Thus, we obtain the commutative triangles below.

\begin{center}

\begin{tikzpicture}[commutative diagrams/every diagram]
	\matrix[matrix of math nodes, name=m, commutative diagrams/every cell,row sep=.7cm,column sep=.45cm] {
	 \pgftransformscale{0.2}
		I & & I\Gamma\Lambda  & & I\Gamma & \\
		& M\Lambda  & & M & & M\Lambda \Gamma\\ };
		
		\path[commutative diagrams/.cd, every arrow, every label]
			(m-1-1) edge node {${\Phi}_I^{\Gamma \Lambda}$} (m-1-3)
			(m-1-1) edge node[swap] {$\phi$} (m-2-2)
			(m-2-2) edge node[swap] {$\psi \Lambda $} (m-1-3)
			(m-2-4) edge node {$\psi$} (m-1-5)
			(m-2-4) edge node[swap] {${\Phi}_M^{\Lambda \Gamma}$} (m-2-6)
			(m-1-5) edge node {$\phi \Gamma$} (m-2-6);
					
\end{tikzpicture}

\end{center}

Therefore, as matrices of module homomorphisms;
\begin{eqnarray*}
[{\psi}^t_s \Lambda ] \cdot [{\phi}^s_t] = [{\Phi}_{I_s}^{\Gamma \Lambda}] \textrm{, and } [{\phi}^s_t \Gamma] \cdot [{\psi}^t_s] = [{\Phi}_{M_t}^{\Lambda \Gamma}]
\end{eqnarray*}
where $\phi$, $\psi$ decompose into their component homomorphisms $\phi^s_t : I_s \to M_t \Lambda$ and $\psi^t_s : M_t \to I_s \Gamma$ respectively.  By Lemma \ref{hom2}, $\phi_t^s, \psi_s^t$ are in the span of ${\Phi}_{I_s, M_t \Lambda}$ and ${\Phi}_{M_t, I_s \Gamma}$ respectively.  Hence, if Hom$(I_s, M_t \Lambda)$ is not identically zero, $\phi_t^s = \lambda^s_t {\Phi}_{I_s, M_t\Lambda}$, where $\lambda^s_t \in K$, with a similar result holding for $\psi^t_s$.  In addition, $(\lambda {\Phi}_{A,B}) {\Lambda}_0 = \lambda ({\Phi}_{A \Lambda_0, B \Lambda_0})$ for all scalars $\lambda$, translations $\Lambda_0$, and all $A, B$ convex.

Therefore, the interleavings between $I$ and $M$ correspond to the algebraic set given by values of $\lambda^s_t, \mu^t_s$ satisfying all quadratic relations obtained by evaluating the matrix equations above at all elements of $P$.  

More precisely, first suppose Hom$(I_s, M_t \Lambda), \textrm{Hom}(M_t,I_s\Lambda) = 0$ for all $s, t$.  In this case, the variety of interleavings $V^{\Lambda,\Gamma}(I,M)$ is given by\\

\medskip
$V^{\Lambda,\Gamma}(I,M) = \begin{cases} \textrm{the zero variety, if }W(I_s), W(M_t) \leq \textrm{max}\{h(\Lambda), h(\Gamma)\}\textrm{ for all }s, t\\
\textrm{the empty variety, otherwise}.\\
\end{cases}$

\medskip
The above cases correspond to whether or not setting all morphisms identically equal to zero corresponds to an admissible interleaving between $I$ and $M$.  On the other hand, suppose some of the relevant spaces of homomorphisms above are non-zero.
Then, let $r^s_t, q^t_s$ be given by
$$r^s_t = \lambda^s_t \cdot {dim}_K(\textrm{Hom}(I_s, M_t\Lambda)) \textrm{, and } q^t_s = \mu^t_s \cdot {dim}_K(\textrm{Hom}(M_t,I_s\Lambda)).$$

Also, let 
$$\bar{r}^s_t = r^s_t \cdot {dim}_K(\textrm{Hom}(I_s\Lambda, M_t \Lambda^2)) \textrm{, and }\bar{q}^t_s = q^t_s \cdot {dim}_K(\textrm{Hom}(M_t\Lambda,I_s \Lambda^2)).$$  

Let $R$ denote the $|T| \times |S|$ matrix $R = [r^s_t{\Phi}_{I_s, M_t\Lambda}]$.  Similarly, let $Q$ denote the $|S| \times |T|$ matrix $Q = [q^t_s{\Phi}_{M_t,I_s\Lambda}]$.  Also, set $\bar{R} = [\bar{r}^s_t{\Phi}_{I_s\Lambda, M_t \Lambda^2}], \bar{Q} = [\bar{q}^t_s{\Phi}_{M_t\Lambda,I_s\Lambda^2}]$. 

\vspace{.2 in}
Then, since $\phi^s_t = \lambda^s_t {\Phi}_{I_s,M_t\Lambda}$, and $\psi^t_s = \mu^t_s {\Phi}_{M_t,I_s\Lambda}$, the homomorphisms $\phi$, and $\psi$ correspond to an interleaving if and only if the equations below are satisfied, when evaluated at all elements of the poset $P$.
\begin{eqnarray}
\bar{Q} \cdot R = [\bar{q}^t_s{\Phi}_{M_t\Lambda,I_s\Lambda^2}] \cdot [r^s_t{\Phi}_{I_s, M_t\Lambda}] = [{\Phi}_{I_s}^{\Gamma \Lambda}], \bar{R} \cdot Q = [\bar{r}^s_t{\Phi}_{I_s\Lambda, M_t \Lambda^2}] \cdot [q^t_s{\Phi}_{M_t\Lambda,I_s\Lambda^2}] = [{\Phi}_{M_t}^{\Lambda \Gamma}]
\end{eqnarray}

\vspace{.2 in}
Therefore, in this situation $V^{\Lambda,\Gamma}(I,M)$ is the affine algebraic set with coordinate ring given by $K[\{\lambda^s_t: \textrm{Hom}(I_s,M_t \Lambda ) \neq 0\},\{\mu^t_s: \textrm{Hom}(M_t, I_s \Gamma) \neq 0\}]$ modulo the ideal given by all identities from (1).  For some computations, see Examples \ref{ex 4}, \ref{ex 3}.

When $P$ is not an $n$-Vee (or at least a tree branching only at a unique minimal element), the collection of interleavings still admits the structure of a variety, though the description is more cumbersome.

\begin{rmk}
\label{variety distance}
Using Proposition \ref{variety}, we may visualize the interleaving distance between two elements of $\mathcal{C}$ as follows.  Let $(a,b)$ be any weight, and let $I, M \in \mathcal{C}$.  For each $\epsilon \in \{h(\Lambda)\}$, let $V_{\epsilon}(I,M)$ denote the variety of $(\Lambda_{\epsilon},\Lambda_{\epsilon})$-interleavings between $I$ and $M$.  Then, 
$$D(I,M) = \textrm{min}\{\epsilon:  \textrm{ the variety }V^{\Lambda_{\epsilon},\Lambda_{\epsilon} }(I,M) \textrm{ is non-empty}\} .$$

For some computations see Example \ref{ex new}.
\end{rmk}

We now observe that our width gives rise to a bottleneck metric when $P$ is an $n$-Vee.  For this $W$ must be compatible with the interleaving distance in the sense of Subsection \ref{sec bottle}.

\begin{prop}
\label{W and d}
Let $P$ be an $n$-Vee, and let $(a,b)$ be weights.  Let $D = D(d_{a,b})$ be the interleaving distance, and $W$ be the width function.  Then, for $I,M$ convex,
 $$|W(I) - W(J)| \leq D(I,J).$$

\end{prop}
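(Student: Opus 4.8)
The plan is to reduce to a single inequality and then exploit that $W$ may be computed via $W_2$ and $W_3$ (Lemma \ref{W}), which use the very same maximal translations $\Lambda_\epsilon$ that realize the interleaving distance. Since $D$ is symmetric, $|W(I)-W(J)|\le D(I,J)$ is equivalent to the two inequalities obtained from the statement ``$W(J)<W(I)\ \Rightarrow\ W(I)\le W(J)+D(I,J)$'' by interchanging $I$ and $J$, so I will assume $W(J)<W(I)$, put $\epsilon:=D(I,J)$, and use Remark \ref{variety distance} (together with Lemma \ref{T(P)}) to obtain a $(\Lambda,\Lambda)$-interleaving $(\phi\colon I\to J\Lambda,\ \psi\colon J\to I\Lambda)$ with $\Lambda=\Lambda_\epsilon$ of height $\epsilon$. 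Since $\psi\Lambda\circ\phi=\Phi_I^{\Lambda^2}$, if this composite vanishes then $\operatorname{Hom}(I,I\Lambda^2)=0$ (Lemmas \ref{hom1} and \ref{hom2}), whence $W(I)\le h(\Lambda)=\epsilon$ by $W=W_2$ and we are done immediately; so I may assume $\phi,\psi\ne0$, and in particular $\Lambda x_I\in\mathrm{Supp}(J)$, where $x_I$ (resp.\ $x_J$) denotes the unique minimal element of $\mathrm{Supp}(I)$ (resp.\ $\mathrm{Supp}(J)$), which exists by Remark \ref{minimal}; note $\Lambda x_I\ge x_J$.

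Next I would bring in the ``budget'' translation $\Theta:=\Lambda_{W(J)}$. By $W=W_3$ (Lemma \ref{W}), $\Theta^2 x_J\notin\mathrm{Supp}(J)$, while $W(J)<W(I)=W_3(I)$ gives $\Theta^2 x_I\in\mathrm{Supp}(I)$. The key input is Proposition \ref{trim}(i): $I^{-\Lambda^2}$ is a quotient of $\mathrm{im}(\phi)$, and $\mathrm{im}(\phi)$ is a submodule of $J\Lambda$, so $\mathrm{Supp}(I^{-\Lambda^2})\subseteq\mathrm{Supp}(J\Lambda)$; combining this with the identity $\mathrm{Supp}(I^{-\Lambda^2})=\{w\in\mathrm{Supp}(I):\Lambda^2 w\in\mathrm{Supp}(I)\}$ (immediate from Definition \ref{trimright}) one obtains, contrapositively,
\[
(\star)\qquad w\in\mathrm{Supp}(I)\ \text{and}\ \Lambda w\notin\mathrm{Supp}(J)\ \Longrightarrow\ \Lambda^2 w\notin\mathrm{Supp}(I).
\]
I then apply $(\star)$ with $w=\Theta^2 x_I\in\mathrm{Supp}(I)$. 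On each totally ordered segment $(m,M_i]$ the maximal translations of Lemma \ref{T(P)} act by ``move up within the budget as far as possible'', hence commute there, so $\Lambda\Theta^2 x_I=\Theta^2\Lambda x_I=\Theta^2 y$ with $y:=\Lambda x_I\in\mathrm{Supp}(J)$. Since $y\ge x_J$ and $\Theta$ is order-preserving, $x_J\le\Theta^2 x_J\le\Theta^2 y$ is a chain whose middle term lies outside $\mathrm{Supp}(J)$, so convexity forces $\Theta^2 y\notin\mathrm{Supp}(J)$; thus $(\star)$ yields $(\Lambda\Theta)^2 x_I=\Lambda^2\Theta^2 x_I\notin\mathrm{Supp}(I)$, i.e.\ $\operatorname{Hom}(I,I(\Lambda\Theta)^2)=0$ (using the dictionary $\operatorname{Hom}(I,I\rho)\ne0\iff\rho x_I\in\mathrm{Supp}(I)$ recorded before the proof of Lemma \ref{W}). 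As $h(\Lambda\Theta)\le h(\Lambda)+h(\Theta)=\epsilon+W(J)$, the characterization $W=W_2$ gives $W(I)\le h(\Lambda\Theta)\le D(I,J)+W(J)$, as desired.

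I expect the main obstacle to be the two appeals to commutativity of $\Lambda_\epsilon$ and $\Theta=\Lambda_{W(J)}$. The explicit description in Lemma \ref{T(P)} shows $\Lambda_\epsilon\Lambda_\delta=\Lambda_\delta\Lambda_\epsilon$ on each branch $(m,M_i]$, so the argument above is literally correct whenever $x_I\ne m$ (and the iterates stay within the branch of $x_I$ or land at $\infty$; reaching $\infty$ at any stage only makes ``$\notin\mathrm{Supp}(\cdot)$'' automatic and so causes no trouble). When $x_I=m$, so that $\mathrm{Supp}(I)$ genuinely branches, these maximal translations need not commute at $m$, and one must instead run the support bookkeeping segment by segment — tracking into which totally ordered segment $\Lambda x_I$ falls and using condition (4) in the definition of an asymmetric $n$-Vee (again via Lemma \ref{T(P)}) to control $\Lambda m$ and $\Theta m$. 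This casework is routine but fiddly; everything else reduces to the hom/support dictionary above and to convexity of supports of modules in $\mathcal{C}$.
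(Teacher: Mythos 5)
The overall strategy here — reduce to one inequality, realize the interleaving by a maximal translation $\Lambda=\Lambda_\epsilon$, pass to the budget translation $\Theta=\Lambda_{W(J)}$, and use the support bookkeeping from Proposition \ref{trim} together with the equivalences in Lemma \ref{W} — is a genuinely nice algebraic route, and it is different in spirit from the paper's (more arithmetical, case-by-case) proof. However, both appeals to commutativity are in fact false, and this is not merely the $x_I=m$ branching issue you flag at the end; it already fails on a $1$-Vee whenever $a\neq b$. Concretely, take $P=\{1,\dots,10\}$ with $(a,b)=(1,5)$, $\mathrm{Supp}(I)=[4,10]$, $\mathrm{Supp}(J)=[5,10]$. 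Then $W(I)=6$, $W(J)=5$, $D(I,J)=1$, so $\Lambda=\Lambda_1$ and $\Theta=\Lambda_5$, and $x_I=4$. One computes $\Theta^2 x_I=10$, $\Lambda\Theta^2 x_I=\Lambda_1(10)=10$, but $\Theta^2\Lambda x_I=\Lambda_5^2(5)=\infty$; so $\Lambda\Theta^2 x_I\neq\Theta^2\Lambda x_I$, and worse, $\Lambda w=10\in\mathrm{Supp}(J)$, so the hypothesis of $(\star)$ simply does not hold for $w=\Theta^2 x_I$. The second commutativity claim also fails: $(\Lambda\Theta)^2(4)=\infty$ while $\Lambda^2\Theta^2(4)=10$. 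Your parenthetical that ``reaching $\infty$ only makes $\notin\mathrm{Supp}$ automatic and causes no trouble'' is where this slips through: reaching $\infty$ in one order of composition but not the other is precisely what breaks the argument. The underlying mechanism is the $b$-weighted edge to the suspension point: the maximal translations $\Lambda_\epsilon$ satisfy $\Lambda_\epsilon\Lambda_\delta\le\Lambda_{\epsilon+\delta}$ in both orders, but they do not form a commutative family.

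The good news is that the gap is repairable without changing your overall plan, by not insisting on commuting the two translations. Use $W=W_1$ (not $W_2$) and take $\Lambda'=\Lambda\Theta$, $\Gamma'=\Theta\Lambda$, both of height $\le h(\Lambda)+h(\Theta)=D(I,J)+W(J)$; the composite is $\Lambda'\Gamma'=\Lambda\Theta^2\Lambda$, which applied to $x_I$ is $\Lambda(\Theta^2 y)$ with $y=\Lambda x_I$, and needs no commuting. You already showed $\Theta^2 y\notin\mathrm{Supp}(J)$ by convexity. To conclude $\Lambda\Theta^2 y\notin\mathrm{Supp}(I)$, apply Proposition \ref{trim}(ii) to $\psi\colon J\to I\Lambda$: this gives $\mathrm{Supp}(I^{+\Lambda^2}\Lambda)\subseteq\mathrm{Supp}(\mathrm{im}(\psi))\subseteq\mathrm{Supp}(J)$, whose contrapositive reads: if $q\notin\mathrm{Supp}(J)$ and $\Lambda q\ge\Lambda^2 x_I$, then $\Lambda q\notin\mathrm{Supp}(I)$. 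Taking $q=\Theta^2 y$, the inequality $\Lambda q\ge\Lambda^2 x_I$ holds because $\Theta^2\Lambda x_I\ge\Lambda x_I$ and $\Lambda$ is monotone, so $\Lambda\Theta^2\Lambda x_I\notin\mathrm{Supp}(I)$, i.e.\ $\operatorname{Hom}(I,I\Lambda\Theta^2\Lambda)=0$, and $W_1$ gives $W(I)\le D(I,J)+W(J)$. With that substitution, your proof is a clean, more structural alternative to the paper's distance-arithmetic case analysis; the remaining $x_I=m$ bookkeeping is then handled by the same convexity-of-support argument, not by any appeal to commutativity.
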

The proof, which proceeds in cases, is omitted.  Since $W$ and $D$ are compatible on $\Sigma$, we obtain a bottleneck metric on the category $\mathcal{C}$ (see Subsection \ref{sec bottle}).   Let $D_B$ denote this bottleneck metric.  In the next section we will prove an isometry theorem for $1$-Vees.


\section{Isometry Theorem for Finite Totally Ordered Sets}
\label{totally}

We now prove the isometry theorem for finite totally ordered sets.  We will fix notation in this section for our poset.  Let $P = \{m < m_1 < m_2 < ... < n = M_1\} = [m,n]$ be totally ordered (a $1$-Vee), and fix any weight $(a,b)$.  Note that, in this section only, $n$ does not correspond to the number of maximal elements in $P$.  We begin with some preliminary observations.

\begin{lemma}
\label{hom4}
Let $P = \{m < m_1 < m_2 < ... < n = M_1\} = [m,n]$, and suppose $\Lambda$ be a power of a maximal translation with given height. Then,
\begin{enumerate}[(i)]
\item $im(\Lambda) \cap P =[\Lambda(m),n]$.
\item If $i\in [\Lambda(m),n)$, then ${\Lambda}^{-1}(i)$ is a singleton.
\item $\Lambda i=\Lambda j \in P \implies i=j$ or $\Lambda i=\Lambda j=n$.
\end{enumerate}
\end{lemma}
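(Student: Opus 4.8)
The plan is to reduce everything to the explicit description of maximal translations on a finite totally ordered set furnished by (the $n=1$ case in the proof of) Lemma \ref{T(P)}. Write $P=\{m=m_0<m_1<\dots<m_k=n\}$ and recall that for the weighted graph metric $d=d_{a,b}$ on $P^+$ one has $d(m_i,m_j)=a|i-j|$ for $i,j\le k$ and $d(m_i,\infty)=a(k-i)+b$. The maximal translation of height $\epsilon$ is $\Lambda_\epsilon(x)=\max\{y\in P^+: y\ge x,\ d(x,y)\le\epsilon\}$, and, setting $s=\lfloor\epsilon/a\rfloor$, one reads off directly that $\Lambda_\epsilon(m_i)=m_{i+s}$ whenever $i+s<k$, while $\Lambda_\epsilon(m_i)\in\{n,\infty\}$ whenever $i+s\ge k$; the point is that $\epsilon<(s+1)a$, so $i+s<k$ already forces $d(m_i,\infty)>\epsilon$ and hence no ``interior'' vertex escapes all the way to $\infty$. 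A one-line induction on the exponent upgrades this: if $\Lambda=\Lambda_\epsilon^{\,j}$ and $c=js$, then $\Lambda(m_i)=m_{i+c}$ for every $i$ with $i+c<k$, while $\Lambda(m_i)\in\{n,\infty\}$ for every $i$ with $i+c\ge k$ (and $\Lambda(n),\Lambda(\infty)\in\{n,\infty\}$). Thus $\Lambda$ acts on the initial segment $\{m_i: i+c<k\}$ as the injective shift $m_i\mapsto m_{i+c}$, whose image is the consecutive block $\{m_c,\dots,m_{k-1}\}$, and it collapses the remaining top part of $P$ first onto $n$ and then onto $\infty$.

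From here the three assertions drop out. For (i): monotonicity gives $\Lambda(m)\le\Lambda(x)$ for all $x$ (since $m$ is the minimum of $P^+$) and every value of $\Lambda$ lying in $P$ is $\le n$, so $im(\Lambda)\cap P\subseteq[\Lambda(m),n]$; conversely, assuming $\Lambda(m)=m_c\in P$ with $c<k$, any $m_d\in[\Lambda(m),n)$ has $c\le d\le k-1$, hence $d-c<k-c$ and $\Lambda(m_{d-c})=m_d$, while $n$ itself lies in $im(\Lambda)$ because $\Lambda$ attains the value $n$ on $P$ before it first escapes to $\infty$; if instead $\Lambda(m)\in\{n,\infty\}$ both sides of (i) collapse to $\{n\}$ or $\varnothing$ and there is nothing to prove. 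For (ii): if $i\in[\Lambda(m),n)$ then $i<n$, so $i$ cannot be one of the values $n,\infty$ produced on the collapsing segment; hence $i=m_d$ lies in the shift regime and, by injectivity of $m_i\mapsto m_{i+c}$, $\Lambda^{-1}(i)=\{m_{d-c}\}$ is a singleton. For (iii): suppose $\Lambda i=\Lambda j\in P$; if their common value is some $m_d<n$ then by (ii) the fibre over $m_d$ is a singleton and $i=j$, and otherwise the common value is $n$, which is precisely the stated alternative.

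The step I expect to be the real obstacle is the reverse inclusion in (i), and concretely the claim that $n\in im(\Lambda)\cap P$: one must exclude the possibility that the vertices of $P$ lying just below $n$ are mapped directly past $n$ to $\infty$, so that $n$ is skipped entirely. This is the only place where the interaction between the interior edge-weight $a$ and the boundary edge-weight $b$ — that is, whether a maximal translation saturates at $n$ before leaving $P$ — has to be invoked; parts (ii) and (iii), by contrast, only ever involve elements strictly below $n$ and go through unconditionally.
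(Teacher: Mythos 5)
Your description of how $\Lambda=\Lambda_\epsilon^{\,j}$ acts — as the shift $m_i\mapsto m_{i+c}$ (with $c=js$) on the initial segment $\{m_i: i+c<k\}$, and as a collapse into $\{n,\infty\}$ on the rest — is correct, and is exactly the content the paper appeals to when it writes that the lemma ``follows from the form of the maximal translation $\Lambda$.'' The paper provides no further argument, so your fleshed-out proof is more detailed than the paper's. Parts (ii), (iii), and the forward inclusion of (i) follow from the shift/collapse description exactly as you say.

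However, the step you yourself flag as ``the real obstacle'' is a genuine gap, and in fact it cannot be filled: the claim that ``$\Lambda$ attains the value $n$ on $P$ before it first escapes to $\infty$'' is false in general. The collapse onto $\{n,\infty\}$ is monotone, so there is a threshold index, but nothing prevents that threshold from being the bottom of the collapsing segment, so that $n$ is skipped entirely. Concretely, take $P=\{m_0<m_1<m_2\}$ (so $k=2$), $a=10$, $b=2$, and $\epsilon=12$. Then $d(m_1,\infty)=a+b=12\le\epsilon$, so the maximal translation $\Lambda_\epsilon$ sends $m_0\mapsto m_1$, $m_1\mapsto\infty$, $m_2\mapsto\infty$. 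Here $\Lambda_\epsilon(m)=m_1\in P$, but $im(\Lambda_\epsilon)\cap P=\{m_1\}\subsetneq[m_1,m_2]=[\Lambda_\epsilon(m),n]$. So part (i) as stated fails already for $j=1$; the same phenomenon occurs for higher powers (e.g.\ $P=\{m_0,\dots,m_4\}$, $a=10$, $b=2$, $\epsilon=12$, $\Lambda=\Lambda_\epsilon^2$ sends $m_2\mapsto\infty$ directly, skipping $n=m_4$). What is salvageable, and what your argument actually proves, is that $im(\Lambda)\cap P$ is always an initial-less, upward-contiguous segment of the form $[\Lambda(m),\cdot]$ (either $[\Lambda(m),m_{k-1}]$ or $[\Lambda(m),n]$), and that $n\in im(\Lambda)$ does hold whenever $a\le b$, since then $\epsilon<a(s+1)\le as+b=d(m_{k-s},\infty)$ forces $\Lambda_\epsilon(m_{k-s})=n$. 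You should not assert $n\in im(\Lambda)$ unconditionally; either restrict to $a\le b$, or weaken (i) to $im(\Lambda)\cap P\subseteq[\Lambda(m),n]$ with $[\Lambda(m),n)\subseteq im(\Lambda)$ (which is all (ii) and (iii) actually require).
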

The result follows from the form of the maximal translation $\Lambda$ (see the proof of Lemma \ref{T(P)}).  Note that the power of a maximal translation need not be maximal.  Moreover, $h(\Lambda^2)$ need not be $2h(\Lambda)$.  The following Lemma follows from our characterization of the homomorphisms between convex modules in the last section (see Lemma \ref{hom2}).
\begin{lemma}
\label{hominterval}
If $I,J$ are convex modules for $P = \{m < m_1 < m_2 < ... < n = M_1\} = [m,n]$, then $\mathrm{Hom}(I,J)\neq0$ if and only if the endpoints of Supp$(I)=[x,X]$ and Supp$(J)=[y,Y]$ satisfy $$y\leq x\leq Y\leq X.$$ 
\end{lemma}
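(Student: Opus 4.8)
The plan is to read the statement off the classification of homomorphisms between convex modules established in Section~\ref{homomorphisms and translations}. Since $P=[m,n]$ is a $1$-Vee, hence an $n$-Vee, Lemma~\ref{hom2} applies: $\mathrm{Hom}(I,J)$ is either $0$ or one-dimensional, and in the nonzero case it is spanned by the canonical diagonal map $\Phi_{I,J}$ of Definition~\ref{capital phi}. Thus $\mathrm{Hom}(I,J)\neq 0$ if and only if $\Phi_{I,J}$ is a nonzero module homomorphism, which, by the criterion recorded immediately after Definition~\ref{capital phi}, holds exactly when $\textrm{Supp}(I)\cap\textrm{Supp}(J)$ is nonempty and closed in the two directions described there: every element of $\textrm{Supp}(J)$ lying above a point of the intersection lies in $\textrm{Supp}(I)$, and every element of $\textrm{Supp}(I)$ lying below a point of the intersection lies in $\textrm{Supp}(J)$. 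So the whole task is to rewrite these conditions as $y\leq x\leq Y\leq X$, using $\textrm{Supp}(I)=[x,X]$ and $\textrm{Supp}(J)=[y,Y]$.

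For this I would first note that on the chain $P$ one has $[x,X]\cap[y,Y]=[\max\{x,y\},\min\{X,Y\}]$, so nonemptiness of the intersection means $\max\{x,y\}\leq\min\{X,Y\}$. Next, for the ``upward'' condition: its necessity follows by testing it at the top element $Y$ of $\textrm{Supp}(J)$ together with any point of the (nonempty) intersection, which forces $Y\in[x,X]$, i.e. $Y\leq X$; conversely, if $Y\leq X$ then any $y'\in[y,Y]$ above a point $x'$ of the intersection satisfies $x\leq x'\leq y'\leq Y\leq X$, so $y'\in[x,X]$. Hence the upward condition is equivalent to $Y\leq X$. Dually, the ``downward'' condition, tested at the bottom element $x$ of $\textrm{Supp}(I)$, is equivalent to $x\in[y,Y]$, i.e. to $y\leq x$ (the inequality $x\leq Y$ being already subsumed by nonemptiness). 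Assembling these, $\Phi_{I,J}$ is a nonzero homomorphism precisely when $y\leq x$, $Y\leq X$, and $\max\{x,y\}=x\leq Y=\min\{X,Y\}$, which is exactly $y\leq x\leq Y\leq X$.

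There is no genuine obstacle here; the lemma is a direct specialization of Lemma~\ref{hom2} and the remark following Definition~\ref{capital phi} to a totally ordered poset. The only point deserving a word of care is the extraction of the ``only if'' halves of the two closure conditions --- one must evaluate them at the extreme points $Y$ and $x$ rather than at an arbitrary point of the intersection --- but on a chain this is immediate. One could equally give a bare-hands proof by choosing identity bases for $I$ and $J$, observing that $I$ is generated by its lowest basis vector $e_x$, and checking the naturality square across each arrow $i\to j$ of the Hasse quiver of $P$; this reproduces the same inequalities but is longer than necessary given Lemma~\ref{hom2}.
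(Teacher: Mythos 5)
Your argument is correct and is exactly the route the paper takes: the paper simply cites Lemma~\ref{hom2} together with the nonvanishing criterion for $\Phi_{I,J}$ recorded after Definition~\ref{capital phi}, leaving the translation into the chain of inequalities $y\leq x\leq Y\leq X$ to the reader. You have supplied that translation in full, correctly observing that the two closure conditions should be tested at the extreme points $Y$ and $x$ and that the remaining inequality $x\leq Y$ is just nonemptiness of the intersection.
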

As previously mentioned, any homomorphism is a scalar in $K$ times ${\Phi}_{I,J}$ (see Definition \ref{capital phi}).

\begin{lemma}
Let $P$ be as above, and suppose $\Lambda = {\Lambda}_{\epsilon}$ is a maximal translation.  Let $A$ and $B$ be convex, and suppose $A\Lambda,B\Lambda\neq0$ and $\mathrm{Hom}(A,B)\neq0$.  Then $\mathrm{Hom}(A\Lambda,B\Lambda)\neq0$.
\end{lemma}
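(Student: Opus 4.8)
The plan is to reduce the statement to Lemma \ref{hom3}, whose hypothesis asks only for a vertex $t$ with $\Lambda t \in \mathrm{Supp}(A)\cap\mathrm{Supp}(B)$, and then to verify that hypothesis from the explicit form of a maximal translation on a totally ordered set. First I would record the combinatorial data. Writing $\mathrm{Supp}(A)=[x,X]$ and $\mathrm{Supp}(B)=[y,Y]$, the assumption $\mathrm{Hom}(A,B)\neq0$ becomes, via Lemma \ref{hominterval}, the chain $y\leq x\leq Y\leq X$; in particular $\mathrm{Supp}(A)\cap\mathrm{Supp}(B)=[x,Y]$ is a nonempty interval of $P$. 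So it suffices to exhibit some $u$ with $\Lambda u\in[x,Y]$.

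Next I would translate the two nonvanishing hypotheses $A\Lambda\neq0$ and $B\Lambda\neq0$ into inequalities involving $\Lambda(m)$. Since $\mathrm{Supp}(A\Lambda)=\Lambda^{-1}(\mathrm{Supp}(A))$, and since $\Lambda=\Lambda_\epsilon$ is a maximal translation, Lemma \ref{hom4}(i) says $\mathrm{im}(\Lambda)\cap P=[\Lambda(m),n]$; as $\mathrm{Supp}(A)\subseteq P$ this gives $\mathrm{Supp}(A\Lambda)\neq\varnothing \iff [x,X]\cap[\Lambda(m),n]\neq\varnothing$. Because $x\leq X\leq n$ (and necessarily $\Lambda(m)\in P$, else $A\Lambda=0$), this intersection is $[\max\{x,\Lambda(m)\},X]$, so the condition is exactly $\Lambda(m)\leq X$; the same argument with $B$ gives $\Lambda(m)\leq Y$. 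Now combining $\Lambda(m)\leq Y$ with $x\leq Y$ yields $\max\{x,\Lambda(m)\}\leq Y\leq n$, so $[\Lambda(m),n]\cap[x,Y]=[\max\{x,\Lambda(m)\},Y]$ is nonempty. Any vertex $i$ in it lies in $\mathrm{im}(\Lambda)\cap P$ by Lemma \ref{hom4}(i), hence $i=\Lambda u$ for some $u\in P$, and $i\in[x,Y]=\mathrm{Supp}(A)\cap\mathrm{Supp}(B)$. Feeding $A$, $B$, $\Lambda$ and this $u$ into Lemma \ref{hom3} gives $\mathrm{Hom}(A\Lambda,B\Lambda)\neq0$.

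This lemma is light, so there is no real obstacle; the only place that wants care is reading off that $A\Lambda\neq0$ is equivalent to $\Lambda(m)\leq X$ rather than, say, $\Lambda(m)\leq x$. That hinges on Lemma \ref{hom4}(i) describing $\mathrm{im}(\Lambda)\cap P$ as the up-set $[\Lambda(m),n]$ with top $n$, not as a fixed interval, and on the monotonicity of $\Lambda$, which guarantees that preimages of intervals are intervals so that no disconnectedness issues arise. An alternative, more hands-on route would bypass Lemma \ref{hom3}: compute $\mathrm{Supp}(A\Lambda)$ and $\mathrm{Supp}(B\Lambda)$ directly as intervals $[x'',X'']$ and $[y'',Y'']$ using Lemma \ref{hom4}(ii)--(iii), then check $y''\leq x''\leq Y''\leq X''$ and invoke Lemma \ref{hominterval}. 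This works but involves strictly more bookkeeping, so I would present the reduction to Lemma \ref{hom3} as above.
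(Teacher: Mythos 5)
Your proof is correct and takes essentially the same route as the paper's: both reduce the statement to Lemma~\ref{hom3}, using Lemma~\ref{hominterval} to get $y\le x\le Y\le X$ and Lemma~\ref{hom4}(i) to describe $\mathrm{im}(\Lambda)\cap P=[\Lambda m,n]$. The paper argues by contradiction (if nothing in $[x,Y]$ lies in $\mathrm{im}(\Lambda)$ then $[y,Y]$ also misses $\mathrm{im}(\Lambda)$, forcing $B\Lambda=0$), while you argue directly by observing that $B\Lambda\neq0$ is exactly $\Lambda m\le Y$ and hence $[\max\{x,\Lambda m\},Y]\subseteq\mathrm{im}(\Lambda)\cap[x,Y]$ is nonempty; these are two phrasings of the same observation, and your direct version is, if anything, slightly more explicit.
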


\begin{proof}
Let $s \in $Supp$(A) \cap \textrm{ Supp}(B)$.  If $s \in im(\Lambda)$ we are done by Lemma \ref{hom3}.  Otherwise, $[x,Y]  \cap im(\Lambda)$ is empty, where Supp$(A) = [x,X]$, Supp$(B) = [y,Y]$ with $y \leq x \leq Y \leq X$ as in Lemma \ref{hominterval}.  But then, by Lemma \ref{hom4}, $[y,Y]$ is disjoint from the image of $\Lambda$, therefore, $B \Lambda = 0$, a contradiction.

\end{proof}
Lastly, the following is an easy consequences of the results of the previous section (see Definitions \ref{trimleft}, \ref{trimright}).
\begin{lemma}
\label{prime}
Let $P$ be totally ordered.  Then the following are equivalent:
\begin{enumerate}[(i)]
\item $\mathrm{Hom}(J,J\Lambda^2)\neq0$
\item there is an $ x\in\mathrm{Supp}(J)$ with $\Lambda^2x\in\mathrm{Supp}(J)$
\item $J^{+\Lambda^2}\neq0$
\item $(J^{+\Lambda^2})\Lambda\neq0$
\item $J^{- \Lambda^2} \neq 0$.
\end{enumerate}
\end{lemma}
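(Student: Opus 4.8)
The plan is to use statement (ii) as a hub and derive every other item from it. The equivalence (i)$\iff$(ii) is immediate from the observation recorded just before Lemma~\ref{W}: for any convex module $I$ and any $\theta\in\mathcal{T}(P)$ one has $\mathrm{Hom}(I,I\theta)\neq0$ if and only if there is an $x\in\textrm{Supp}(I)$ with $\theta x\in\textrm{Supp}(I)$. Since $\mathcal{T}(P)$ is a monoid, $\Lambda^2\in\mathcal{T}(P)$, so taking $I=J$ and $\theta=\Lambda^2$ gives exactly (i)$\iff$(ii).

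For the remaining equivalences I would first fix notation: since $P$ is totally ordered and $J$ is convex (connected and interval convex in a totally ordered poset), $\textrm{Supp}(J)=[x_0,X_0]$ is a single interval, and $\Lambda$—hence $\Lambda^2$—is order preserving with $\Lambda p\geq p$ for all $p$. Unwinding Definition~\ref{trimleft}, $\textrm{Supp}(J^{+\Lambda^2})=[\Lambda^2x_0,X_0]$, so $J^{+\Lambda^2}\neq0$ iff $\Lambda^2x_0\leq X_0$; unwinding Definition~\ref{trimright}, $J^{-\Lambda^2}\neq0$ iff $\{y\geq x_0:\Lambda^2y\leq X_0\}$ is nonempty. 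Then (ii)$\Rightarrow$(iii): if $x\in[x_0,X_0]$ has $\Lambda^2x\in[x_0,X_0]$, monotonicity gives $\Lambda^2x_0\leq\Lambda^2x\leq X_0$. Conversely (iii)$\Rightarrow$(ii): if $\Lambda^2x_0\leq X_0$ then $x_0\in\textrm{Supp}(J)$ and $x_0\leq\Lambda^2x_0\leq X_0$, so $\Lambda^2x_0\in\textrm{Supp}(J)$. The same pair of inequalities yields (ii)$\iff$(v), taking $y=x$ in one direction and $y=x_0$ in the other. Finally (iv)$\Rightarrow$(iii) is immediate since a translation sends $0$ to $0$, and for (iii)$\Rightarrow$(iv), assuming $J^{+\Lambda^2}\neq0$ I would evaluate $(J^{+\Lambda^2})\Lambda$ at $p=\Lambda x_0$: then $\Lambda p=\Lambda^2 x_0$ is the left endpoint of $[\Lambda^2x_0,X_0]=\textrm{Supp}(J^{+\Lambda^2})$, so $\big((J^{+\Lambda^2})\Lambda\big)(p)=J^{+\Lambda^2}(\Lambda p)\neq0$. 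This closes the chain (i)$\iff$(ii)$\iff$(iii)$\iff$(iv) together with (ii)$\iff$(v).

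The only step requiring any care is (iii)$\Rightarrow$(iv): one must verify that post-composing with $\Lambda$ does not annihilate the nonzero trimmed module $J^{+\Lambda^2}$. This works precisely because the left endpoint of $\textrm{Supp}(J^{+\Lambda^2})$ is $\Lambda^2x_0=\Lambda(\Lambda x_0)$, visibly in the image of $\Lambda$; on a general poset the trimmed interval can miss the image of $\Lambda$ entirely (cf.\ Example~\ref{image of translation}), which is exactly why the statement is restricted to the totally ordered case. Everything else is a direct unwinding of Definitions~\ref{trimleft} and~\ref{trimright} together with the two defining properties of a translation.
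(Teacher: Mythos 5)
The paper states Lemma~\ref{prime} as ``an easy consequence of the results of the previous section'' and gives no explicit argument, so there is no paper proof to compare against. Your proof is correct and supplies exactly the missing details: the equivalence (i)$\iff$(ii) is the specialization $\theta=\Lambda^2$ of the general remark recorded just before Lemma~\ref{W}; your identification of $\textrm{Supp}(J^{+\Lambda^2})=[\Lambda^2x_0,X_0]$ and the nonemptiness criterion $\Lambda^2x_0\leq X_0$ (with the dual criterion for $J^{-\Lambda^2}$) correctly unwinds Definitions~\ref{trimleft} and~\ref{trimright} on a totally ordered poset; and the monotonicity arguments for (ii)$\iff$(iii), (ii)$\iff$(v) are clean. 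You are also right to flag (iii)$\Rightarrow$(iv) as the one step requiring care, and your evaluation at $p=\Lambda x_0$ is the correct way to see that post-composition with $\Lambda$ does not kill the trimmed module; implicitly you are using that $\Lambda^2x_0\leq X_0<\infty$ forces $\Lambda x_0\in P$, which is worth saying explicitly if this were written out in full, but it is a routine check.
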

We are now ready to prove that every interleaving induces a matching of barcodes when $P$ is a $1$-Vee.  This is very much an algebraic reformulation of the results of Bauer and Lesnick in \cite{induced_matchings} applied to our framework.  We will make use of their canonical matchings of barcodes induced by injective or surjective module homomorphisms.  
\begin{definition}(see Section 4 in \cite{induced_matchings})
\label{canonical}
Let $P$ be totally ordered, and let $I = \bigoplus\limits_{s \in S}I_s, M = \bigoplus\limits_{t\in T} M_t$ be in $\mathcal{C}$.  Let $f$ be a module homomorphism from $I \xrightarrow{f} M$.  Then, 
\begin{enumerate}[(i)]
\item if $f$ is surjective, let $\Theta(f)$ from $B(M)$ to $B(I)$ be the canonical matching of barcodes.
\item if $f$ is injective, let $\Theta(f)$ from $B(I)$ to $B(M)$ be the canonical matching of barcodes. 

\end{enumerate}
\end{definition}
Recall from \cite{induced_matchings}, $\Theta$ is categorical on injections or surjections.  That is, $f = g \circ h$, $f, g, h$ surjections implies $\Theta (f) = \Theta (h) \circ \Theta (g)$.  And dually, $f = g \circ h$, $f, g, h $ injections implies $\Theta (f) = \Theta (g) \circ \Theta (h)$.  

The authors wish to emphasize that the above statements holds for any permissable enumeration on each barcode.  That is to say, for each module $M$, all isomorphic elements of the barcode $B(M)$ may be enumerated arbitrarily.  This enumeration is then fixed.  In one instance, it will be convenient (though not necessary) to choose explicitly an enumeration for a particular barcode.  

We now establish some additional properties of convex modules for $1$-Vees.

\begin{lemma}
\label{prematching}
Let $P=[m,n]$ be a $1$-Vee, $\Lambda$ a maximal translation on $P$. Let $\Sigma$ be the set of isomophism classes of convex modues.  Let $F,G$ be the functions $$F,G:\Sigma\rightarrow\Sigma\cup\{0\}$$ where $F(\sigma)=\sigma^{-\Lambda^2}$ and $G(\sigma)=\sigma^{+\Lambda^2}\Lambda$.  
$$\textrm{Let }\Sigma_0=\{\sigma:W(\sigma)>h(\Lambda)\} \textrm{, and }\bar{\Sigma}\textrm{ be }\Sigma_0 \cap \{\sigma \in \Sigma: \textrm{Supp}(\sigma) = [x,X] \textrm{, with } \Lambda^2 x = n\}.$$
\begin{enumerate}[(i)]
\item $F(\Sigma_0)\subset\Sigma$, and $F$ is one-to-one on $\Sigma_0$.
\item $G(\Sigma_0)\subset\Sigma$, and $G$ is one-to-one on $\Sigma_0 - \bar{\Sigma}$.  Also, $G(\bar{\Sigma}) = \{\sigma_n\Lambda\}$, where $\sigma_n$ is the convex module with support $[n]$.
\end{enumerate}
\end{lemma}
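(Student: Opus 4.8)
The plan is to push everything down to supports. For a convex module $\sigma$ on $P=[m,n]$ write $\mathrm{Supp}(\sigma)=[x,X]$; since $P$ is a $1$-Vee, $F(\sigma)=\sigma^{-\Lambda^2}$ and $G(\sigma)=(\sigma^{+\Lambda^2})\Lambda$ are again $0$ or convex (by the comments after Definitions \ref{trimleft}, \ref{trimright} and by Lemma \ref{hom1}), so it suffices to track the two endpoints of their supports and to use the explicit form of maximal translations recorded in Lemma \ref{hom4}.

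First I would reformulate membership in $\Sigma_0$. Since $\Lambda$ is a maximal translation, $\Lambda=\Lambda_{h(\Lambda)}$, so by Lemma \ref{W} and the interval property noted just after it, $\sigma\in\Sigma_0$ iff $\mathrm{Hom}(\sigma,\sigma\Lambda^2)\neq 0$. By Lemma \ref{prime} this holds iff there is $y\in[x,X]$ with $\Lambda^2 y\in[x,X]$, and since $\Lambda^2$ is order-preserving with $\Lambda^2 y\geq y\geq x$, this is simply the condition $\Lambda^2 x\leq X$. The same instance of Lemma \ref{prime} also yields $\sigma^{-\Lambda^2}\neq 0$ and $(\sigma^{+\Lambda^2})\Lambda\neq 0$ for every $\sigma\in\Sigma_0$, giving at once $F(\Sigma_0)\subseteq\Sigma$ and $G(\Sigma_0)\subseteq\Sigma$.

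For injectivity of $F$ on $\Sigma_0$: by Definition \ref{trimright}, $F(\sigma)$ has support $[x,X^N]$ with $X^N=\max\{y\geq x:\Lambda^2 y\leq X\}$ (nonempty since $\Lambda^2 x\leq X$). The key point I would prove is $X=\Lambda^2(X^N)$. If instead $\Lambda^2(X^N)<X$, then the successor $w$ of $\Lambda^2(X^N)$ in $P$ satisfies $\Lambda^2 m\leq w\leq X\leq n$, so $w\in[\Lambda^2 m,n]=\mathrm{im}(\Lambda^2)\cap P$ by Lemma \ref{hom4}(i) applied to the power $\Lambda^2$ of a maximal translation; writing $w=\Lambda^2 v$, monotonicity forces $v>X^N$ while $\Lambda^2 v=w\leq X$ contradicts the maximality defining $X^N$. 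Hence $F(\sigma)$ determines both $x$ and $X=\Lambda^2(X^N)$, so $F$ is one-to-one on $\Sigma_0$. For $G$ I would first note that $\sigma^{+\Lambda^2}$ has support $[\Lambda^2 x,X]$, so $G(\sigma)$ has support $\Lambda^{-1}([\Lambda^2 x,X])$, an interval $[\alpha,\beta]$ with $\beta=\max\{i:\Lambda i\leq X\}$. If $\sigma\in\bar\Sigma$ then $\Lambda^2 x=n$, hence $X=n$ and $\sigma^{+\Lambda^2}$ is the simple module $\sigma_n$ at $n$, so $G(\sigma)=\sigma_n\Lambda$ independently of $\sigma$, giving $G(\bar\Sigma)=\{\sigma_n\Lambda\}$. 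If $\sigma\in\Sigma_0\setminus\bar\Sigma$ then $\Lambda^2 x\neq n$ by definition of $\bar\Sigma$, and moreover $\Lambda x\neq n$ (if $\Lambda x=n$ then $\Lambda^2 x=\Lambda n\geq n$, which together with $\Lambda^2 x\leq X\leq n$ forces $\Lambda^2 x=n$). Since $\Lambda^2 x\neq n$, Lemma \ref{hom4}(ii) gives $\Lambda^{-1}(\Lambda^2 x)=\{\Lambda x\}$, whence $\alpha=\Lambda x$; the no-skip argument of the $F$ case, now with $\Lambda$ in place of $\Lambda^2$, gives $X=\Lambda\beta$; and $\Lambda x\neq n$ lets Lemma \ref{hom4}(ii) recover $x$ as the unique $\Lambda$-preimage of the left endpoint $\Lambda x$ of $\mathrm{Supp}(G(\sigma))$. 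Thus $G$ is one-to-one on $\Sigma_0\setminus\bar\Sigma$.

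The routine ingredient is the no-skip argument, which is immediate from Lemma \ref{hom4}(i) (the image of a power of a maximal translation is an up-set in $P$). The hard part will be the bookkeeping at the top element $n$: $\Lambda$ is injective on $P$ except possibly on its fiber over $n$ (Lemma \ref{hom4}(iii)), and it is exactly this failure that pushes the exceptional family $\bar\Sigma$ — the convex modules whose left endpoint $\Lambda^2$ carries to $n$ — out of the domain on which $G$ is injective. Keeping the cases $\Lambda^2 x=n$, $\Lambda x=n$, and the degenerate possibility $\Lambda m=\infty$ (which cannot occur for $\sigma\in\Sigma_0$, since then $\Lambda^2 x=\infty>X$) cleanly separated is where the care is needed.
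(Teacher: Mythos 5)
Your proof is correct and follows essentially the same route as the paper's: both reduce everything to tracking endpoints of supports, both invoke Lemma \ref{hom4} to rule out skips (the image of a power of a maximal translation being an up-set $[\Lambda^k m,n]$, with singleton fibers away from $n$), and both isolate $\bar\Sigma$ precisely because injectivity of $\Lambda$ fails only over $n$. The main cosmetic difference is framing — you recover $\sigma$ from $F(\sigma)$ and $G(\sigma)$ via the explicit identities $X=\Lambda^2(X^N)$ and $(\alpha,\beta)=(\Lambda x,\,\max\{i:\Lambda i\leq X\})$, whereas the paper argues the contrapositive directly — and you are more explicit about the preliminary point that $\Sigma_0$ is exactly where $F,G$ are nonvanishing (via Lemma \ref{W} and Lemma \ref{prime}), which the paper leaves implicit.
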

\begin{proof}

We will show that if $\sigma_1,\sigma_2\in\Sigma_0$ with$F(\sigma_1)\cong F(\sigma_2)$, then  $\sigma_1\cong\sigma_2.$  Since convex modules are characterized by their supports, say $F(\sigma_1), F(\sigma_2)$ have shared support $[x,X']$.  Then $X'$ is maximal such that $\Lambda^2X'\leq X_1$ and also such that $\Lambda^2X'\leq X_2$where $\sigma_1, \sigma_2$ have support given by $[x,X_1]$, and $[x,X_2]$ respectively.  But then by Lemma \ref{hom4}, $X_1 = X_2$, so $\sigma_1 \cong \sigma_2$.  This proves (i).

For (ii), we'll prove the contrapositive.  Suppose $\sigma_1,\sigma_2 \in \Sigma_0 - \bar{\Sigma}$ have supports given by $[x_1,X_1],[x_2,X_2]$ respectively. Suppose $\Lambda^2x_1<\Lambda^2x_2\leq n$, then by Lemma \ref{hom4}, $\Lambda x_1<\Lambda x_2$. Then, again by Lemma \ref{hom4}, $G(\sigma_1)=[\Lambda x_1,\cdot]$, $G(\sigma_2)=[\Lambda x_2,\cdot]$, which are distinct.  On the other hand, if $x_1 = x_2$, $X_1 < X_2 \leq n$, then $\sigma_1^{\Lambda^2}, \sigma_2^{\Lambda^2}$ have supports given by $[\Lambda^2x_1,X_1], [\Lambda^2x_1,X_2]$ respectively.  But then, since only $X_2$ is possibly equal to $n$, the right endpoint of the support of $G(\sigma_2)$ is strictly larger than the right endpoint of the support of $G(\sigma_1)$.  

Clearly, if $\sigma \in \bar{\Sigma}$, the support of $\sigma$ is $[x,n]$, with $\Lambda^2 x = n$.  Then, by inspection, $G(\sigma) = \sigma_n\Lambda$.  Moreover, it is clear from the proof that $G^{-1}(\sigma_n\Lambda) \subseteq \bar{\Sigma}$. 

\end{proof}

\begin{prop}
\label{totallyordered}
Let $P$ be totally ordered and let $\mathcal{C}$ be the full subcategory of $A(P)$-modules consisting of direct sums of convex modules.  Let $(a,b) \in {\mathbb{N}} \times {\mathbb{N}}$ be a weight and let $D$ denote interleaving distance (corresponding to the weight $(a,b)$) restricted to $\mathcal{C}$.  \\Let  $W(M) = \textrm{min}\{ \epsilon: \textrm{Hom}(M, M\Gamma \Lambda) = 0, \Gamma, \Lambda \in  \mathcal{T}(\mathcal{P}), h(\Gamma), h(\Lambda) \leq \epsilon \}$, and let $D_B$ be the bottleneck distance on $\mathcal{C}$ corresponding to the interleaving distance and $W$.  Then, the identity is an isometry from $$(\mathcal{C},D) \xrightarrow{Id} (\mathcal{C},D_B).$$
\end{prop}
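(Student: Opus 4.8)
The plan is to prove $D_B(M,N)\le D(M,N)$ and $D(M,N)\le D_B(M,N)$ for arbitrary $M,N\in\mathcal C$; together these give that the identity is an isometry. Write $M=\bigoplus_i M_i$, $N=\bigoplus_j N_j$ for the convex decompositions. Since a $1$-Vee is asymmetric, Lemma \ref{T(P)} supplies for each admissible height $\epsilon$ a maximal translation $\Lambda_\epsilon$, these are linearly ordered by height, and I will use the elementary fact that a $(\Lambda,\Lambda)$-interleaving between modules in $\mathcal C$ is also a $(\Lambda',\Lambda')$-interleaving whenever $\Lambda\le\Lambda'$ (post- and pre-compose the interleaving morphisms with the structure maps $C(\Lambda p\le\Lambda' p)$). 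Hence $D(C,D')\le\epsilon$ iff $C$ and $D'$ are $(\Lambda_\epsilon,\Lambda_\epsilon)$-interleaved, and by the definition of $D$ together with Lemma \ref{T(P)}(iii) any interleaving of height $\le\epsilon$ may be taken to be a $(\Lambda_\epsilon,\Lambda_\epsilon)$-interleaving.

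\emph{Converse stability, $D(M,N)\le D_B(M,N)$.} Let $\epsilon>D_B(M,N)$, fix an $\epsilon$-matching $f\colon S'\to T'$ between $B(M)$ and $B(N)$, and put $\Lambda=\Lambda_\epsilon$. I assemble a $(\Lambda,\Lambda)$-interleaving $(\phi,\psi)$ block-diagonally along $f$: on a matched pair $M_i\leftrightarrow N_j=f(M_i)$ choose any $(\Lambda,\Lambda)$-interleaving $(\phi_i,\psi_i)$ with $\phi_i\colon M_i\to N_j\Lambda$, which exists because $d_2(M_i,N_j)=D(M_i,N_j)\le\epsilon$; on an unmatched summand set the corresponding component of $\phi$ or of $\psi$ equal to zero. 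Because $\phi$ maps $M_i$ into $N_{f(i)}\Lambda$ and $\psi$ maps $N_j$ into $M_{f^{-1}(j)}\Lambda$, the composites $\psi\Lambda\circ\phi$ and $\phi\Lambda\circ\psi$ are again block-diagonal; on a matched block they agree with the canonical maps $\Phi_{M_i}^{\Lambda^2}$ and $\Phi_{N_j}^{\Lambda^2}$ by the pairwise interleaving, and on an unmatched summand they vanish, which is consistent because $W(M_i)\le\epsilon=h(\Lambda)$ forces $\operatorname{Hom}(M_i,M_i\Lambda^2)=0$, hence $\Phi_{M_i}^{\Lambda^2}=0$, by Lemma \ref{prime}. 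Thus $(\phi,\psi)$ is a $(\Lambda_\epsilon,\Lambda_\epsilon)$-interleaving, $D(M,N)\le\epsilon$, and letting $\epsilon\downarrow D_B(M,N)$ gives the inequality.

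\emph{Stability, $D_B(M,N)\le D(M,N)$.} This is the substantive half, an algebraic rendering of the induced-matching theorem of \cite{induced_matchings}. Let $\epsilon>D(M,N)$ and take a $(\Lambda,\Lambda)$-interleaving $(\phi,\psi)$ with $\Lambda=\Lambda_\epsilon$ and $\phi\colon M\to N\Lambda$. Factor $\phi=M\twoheadrightarrow\operatorname{im}\phi\hookrightarrow N\Lambda$ and form the induced matching $\mathcal M(\phi)\colon B(M)\rightharpoonup B(N\Lambda)$ as the composite of the canonical matchings of Definition \ref{canonical} through $B(\operatorname{im}\phi)$, using their categoricity on injections and surjections. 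By Lemma \ref{hom1} each $N_j\Lambda$ is $0$ or convex, so $\tau\mapsto\tau\Lambda$ is a bijection from $\{\,N_j:N_j\Lambda\neq0\,\}$ onto $B(N\Lambda)$ as a multiset — here Lemma \ref{hom4} is used, the fibres of the maximal translation $\Lambda$ over the interior of $P$ being singletons and collapsing occurring only at the top vertex $n$, which merely adds multiplicity (this is the $\bar\Sigma$ phenomenon isolated in Lemma \ref{prematching}). Composing $\mathcal M(\phi)$ with the inverse of this bijection produces a partial matching $\chi\colon B(M)\rightharpoonup B(N)$, and it remains to check the three axioms of an $\epsilon$-matching. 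The summands of $M$ left unmatched by $\chi$ are controlled by $\ker\phi$ and those of $N$ (after twisting back) by $\operatorname{coker}\phi$; since by Proposition \ref{inj surj}(iii),(iv) every bar of $\ker\phi$ and of $\operatorname{coker}\phi$ has width $\le h(\Lambda)=\epsilon$ — and since an unmatched $N_j$ with $N_j\Lambda=0$ has $\operatorname{Supp}(N_j)$ lying below $\operatorname{im}\Lambda$ (Lemma \ref{hom4}) and so $W(N_j)\le\epsilon$ by Lemma \ref{prime} — the two width conditions follow, using Lemma \ref{W} and Lemma \ref{prematching} to track $W$ through the $\Lambda$-twist. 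Finally, a pair matched by $\chi$, say $M_i\leftrightarrow N_j$, carries the restriction of $(\phi,\psi)$ to those summands; since every component of $\phi,\psi$ is a $K$-multiple of a canonical map $\Phi$ (Lemma \ref{hom2}), these restrictions satisfy $\psi|_{N_j}\Lambda\circ\phi|_{M_i}=\Phi_{M_i}^{\Lambda^2}$ and its dual, so $M_i$ and $N_j$ are $(\Lambda,\Lambda)$-interleaved and $D(M_i,N_j)\le\epsilon$. Hence $\chi$ is an $\epsilon$-matching and $D_B(M,N)\le\epsilon$, which completes the proof.

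The hard part is this last half, and within it two features with no analogue in the classical constant-shift picture. First, the translations of a $1$-Vee are not shifts, so $\operatorname{im}\Lambda$ is a stretched subposet and $N_j\Lambda$ can vanish or (outside $n$-Vees) even split; keeping the passage $B(N)\leftrightarrow B(N\Lambda)$ and the factorization of $\phi$ under control requires the rigidity of maximal translations (Lemma \ref{hom4}), the convexity of $N_j\Lambda$ (Lemma \ref{hom1}), and the bookkeeping packaged in Lemma \ref{prematching}. Second, the width is the algebraic quantity $W$ of Definition \ref{defW}, not the thickness of a support, so the equalities of Lemma \ref{W}, the detection criterion Lemma \ref{prime}, the kernel/cokernel bounds of Proposition \ref{inj surj}, and the compatibility $|W(I)-W(J)|\le D(I,J)$ of Proposition \ref{W and d} must stand in for the elementary interval arithmetic of \cite{induced_matchings}. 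Once this dictionary is in place, and once the matched-pair condition is verified via Lemma \ref{hom2} as above, the argument follows the structure of Bauer and Lesnick's proof of the induced-matching theorem.
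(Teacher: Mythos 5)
Your overall architecture matches the paper's exactly: the paper proves Proposition \ref{totallyordered} by invoking Theorem \ref{matching} (interleaving $\Rightarrow$ $\epsilon$-matching), and then in the proof of Theorem \ref{main} observes that the easy converse ($\epsilon$-matching $\Rightarrow$ diagonal interleaving) follows by inserting zero morphisms. Your converse-stability paragraph is precisely that observation, written out carefully, and it is correct: the block-diagonal assembly, the use of Lemma \ref{T(P)} to upgrade to a single maximal translation, and the appeal to Lemma \ref{prime} to discharge the commutativity constraint on unmatched summands all go through.

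The gap is in the last clause of your stability argument, where you assert that a matched pair $M_i\leftrightarrow N_j$ inherits an interleaving simply by restricting $(\phi,\psi)$ to the corresponding components, with the justification that by Lemma \ref{hom2} each component is a $K$-multiple of $\Phi$. This does not follow. The interleaving identity $\psi\Lambda\circ\phi=\Phi_M^{\Lambda^2}$ is the \emph{matrix} equation $\sum_{j'}\psi_i^{j'}\Lambda\circ\phi_{j'}^{i}=\Phi_{M_i}^{\Lambda^2}$; the single summand $\psi_i^{j}\Lambda\circ\phi_{j}^{i}$ for the matched $j$ may be any scalar multiple of $\Phi_{M_i}^{\Lambda^2}$ including zero (see Example \ref{ex 3}, where the interleaving is genuinely non-diagonal and off-diagonal blocks carry the weight). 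The matching $\chi$, being built through $\Theta(\rho)^{-1}$, $\Theta(\iota)$, and $B(M,M\Lambda)$, gives no control over which scalar appears. What the matching \emph{does} give, via Proposition \ref{inj surj}, is the endpoint inequalities $y\le x\le Y\le X$ and $w\le z\le W\le Z$ on the supports of $M_i$, $N_j\Lambda$, $N_j$, $M_i\Lambda$, and the paper's proof of part (3) of Theorem \ref{matching} establishes these (in a four-case analysis depending on whether $W(M_i)>\epsilon$ and whether $W(N_j)>\epsilon$), then discards the original morphisms and constructs \emph{fresh} interleaving morphisms $\phi'=\Phi_{M_i,N_j\Lambda}$, $\psi'=\Phi_{N_j,M_i\Lambda}$ whose commutativity is verified directly using Lemma \ref{hom2} to reduce to a single-vertex check. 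That case analysis is the genuinely nontrivial part of the theorem (it is where the non-constant nature of $\Lambda$, and in particular the behaviour near the top vertex $n$ where fibres of $\Lambda$ are not singletons, has to be handled), and your proof passes over it. Everything else in your write-up — the factorization of $\phi$ through its image, the categoricity of induced matchings, the width bounds on $\ker\phi$ and $\operatorname{coker}\phi$ from Proposition \ref{inj surj}(iii),(iv), the careful handling of the $B(N)\leftrightarrow B(N\Lambda)$ bookkeeping via Lemmas \ref{hom1}, \ref{hom4}, \ref{prematching} — is on target and matches the paper's argument.
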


This corresponds to the case that $P$ is a $1$-Vee in Theorem \ref{main}.  The result follows from Theorem \ref{matching}.  We will proceed in the same fashion as \cite{induced_matchings}.  Before continuing, we point out that Theorem \ref{matching} (and later Theorem \ref{main}) do not say that every interleaving is diagonal (see Examples \ref{ex 4}, \ref{ex 3}).  Instead, they simply constrain the isomorphism classes of modules which admit an interleaving.
\begin{theorem}
\label{matching}
Let $P$ be totally ordered ($P$ is a $1$-Vee) and let $I = \bigoplus\limits_{s \in S}I_s, M = \bigoplus\limits_{t\in T} M_t$ be in $\mathcal{C}$.  Let $\Lambda = {\Lambda}_{\epsilon} \in \mathcal{T}(P)$ be maximal with $h(\Lambda) = \epsilon$.  Suppose there exists a $(\Lambda, \Lambda)$-interleaving between $I$ and $M$.  Then there exists a $h(\Lambda)$ matching from $B(I)$ to $B(M)$.  
\end{theorem}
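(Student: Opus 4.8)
The plan is to reproduce the induced-matching argument of Bauer and Lesnick \cite{induced_matchings} --- which is available to us through the canonical matchings $\Theta$ of Definition \ref{canonical} and their categoricity on chains of injections and of surjections --- and to absorb the one genuinely new phenomenon, namely that the maximal translation $\Lambda$ is not invertible on $\mathcal{C}$, using the trim operations $(\cdot)^{-\Lambda^{2}}$, $(\cdot)^{+\Lambda^{2}}$ of Definitions \ref{trimleft}, \ref{trimright} and the injectivity statement of Lemma \ref{prematching}.

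First I would fix the given $(\Lambda,\Lambda)$-interleaving $(\phi,\psi)$, with $\phi:I\to M\Lambda$ and $\psi:M\to I\Lambda$, so that $\psi\Lambda\circ\phi=\Phi_{I}^{\Lambda^{2}}$ and $\phi\Lambda\circ\psi=\Phi_{M}^{\Lambda^{2}}$, and factor $\phi=j\circ q$ with $q:I\twoheadrightarrow\operatorname{im}(\phi)$ surjective and $j:\operatorname{im}(\phi)\hookrightarrow M\Lambda$ injective. I would then record the ingredients: by Proposition \ref{inj surj}(iii), (iv) every summand of $\ker(\phi)$ and of $\operatorname{cok}(\phi)$ has width at most $h(\Lambda)$; by Proposition \ref{trim} the module $\operatorname{im}(\phi)$ is sandwiched as $M^{+\Lambda^{2}}\Lambda\hookrightarrow\operatorname{im}(\phi)\twoheadrightarrow I^{-\Lambda^{2}}$, with in addition $I\twoheadrightarrow\operatorname{im}(\phi)$ and $M^{+\Lambda^{2}}\Lambda\hookrightarrow M\Lambda$; and, since for a convex summand $N$ one has $N^{-\Lambda^{2}}\neq 0\iff N^{+\Lambda^{2}}\neq 0\iff W(N)>h(\Lambda)$ by Lemma \ref{prime} and the description of $W$ in Lemma \ref{W}, the trim maps $I\twoheadrightarrow I^{-\Lambda^{2}}$ and $M^{+\Lambda^{2}}\hookrightarrow M$ isolate, summand by summand, precisely the wide summands of $I$ and of $M$, their complements being the narrow summands.

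Next I would assemble the matching. Applying $\Theta$ to the injection $h:M^{+\Lambda^{2}}\Lambda\hookrightarrow\operatorname{im}(\phi)$ and to the surjection $q:I\twoheadrightarrow\operatorname{im}(\phi)$ and composing gives a total injective matching $B(M^{+\Lambda^{2}}\Lambda)\to B(I)$; splicing with the trim maps via categoricity shows that its image contains every wide summand of $I$, while the summands of $I$ outside the image are narrow (controlled by $\ker(\phi)$). On the $M$ side I would convert $B(M^{+\Lambda^{2}}\Lambda)$ into a statement about $B(M)$: a summand with $M_{t}\Lambda=0$ already has $W(M_{t})\le h(\Lambda)$ (use the pair $(\Lambda,\mathrm{id})$ in the definition of $W$), so it may be left unmatched, and for the rest Lemma \ref{prematching}(ii) says that $G=(\cdot)^{+\Lambda^{2}}\Lambda$ is one-to-one on the wide convex modules $\Sigma_{0}$ apart from collapsing $\bar{\Sigma}$ onto the single class $\sigma_{n}\Lambda$; hence the multiset $B(M^{+\Lambda^{2}}\Lambda)$ records the wide summands of $M$ faithfully except for a possible multiplicity collapse at $\sigma_{n}\Lambda$, and a counting argument promotes the matching to one whose image on the $M$ side contains every wide summand of $M$. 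The count inequalities of Proposition \ref{inj surj}(i), (ii), which are the combinatorial input for these canonical matchings, also record how the endpoints of supports move, and from them I would read off $D(I_{s},M_{t})\le h(\Lambda)$ for every matched pair (Remark \ref{variety distance}). Together, conditions (i) and (ii) of an $h(\Lambda)$-matching follow from ``narrow $\Leftrightarrow$ unmatched'' and (iii) from these endpoint estimates.

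The step I expect to be the main obstacle is precisely this passage from $B(M^{+\Lambda^{2}}\Lambda)$ (equivalently $B(M\Lambda)$) back to $B(M)$. In the $\mathbb{R}$-indexed setting of \cite{induced_matchings} the shift functor is invertible and this step is free, but here $\Lambda$ both annihilates and merges convex modules (cf. Example \ref{diamond}), so one must verify not merely that every summand of $M\Lambda$ is accounted for but that every \emph{wide} summand of $M$ gets matched --- and then that the matched pairs clustering at $\sigma_{n}\Lambda$, which are exactly the $\bar{\Sigma}$-summands of $M$ (all of whose supports are pinned near the top vertex $n$), still satisfy the distance bound. Lemma \ref{prematching} is tailored to make this bookkeeping go through. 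A secondary technical point is checking that Bauer--Lesnick's canonical matchings behave as expected when applied summand-wise to direct sums and around the sandwich $M^{+\Lambda^{2}}\Lambda\hookrightarrow\operatorname{im}(\phi)\twoheadrightarrow I^{-\Lambda^{2}}$, which is where Proposition \ref{inj surj}(i), (ii) earns its keep.
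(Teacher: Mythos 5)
Your plan identifies exactly the right ingredients --- the Bauer--Lesnick canonical matchings $\Theta$, the sandwich $M^{+\Lambda^{2}}\Lambda\hookrightarrow\operatorname{im}(\phi)\twoheadrightarrow I^{-\Lambda^{2}}$ from Proposition \ref{trim}, Lemma \ref{prime} for detecting wide summands, Lemma \ref{prematching} for the non-invertibility of $\Lambda$ --- and the overall structure matches the paper's three-part argument (wide summands of $I$ are matched, wide summands of $M$ are matched, matched pairs are $h(\Lambda)$-close). The bookkeeping is assembled in a slightly different direction (you build $B(M^{+\Lambda^{2}}\Lambda)\to B(I)$, the paper builds $B(I)\to B(\operatorname{im}\phi)\to B(M\Lambda)\to B(M)$), but this is cosmetic.

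However, your treatment of condition (iii) is the genuine gap. You say you will ``read off $D(I_s,M_t)\le h(\Lambda)$'' from the count inequalities of Proposition \ref{inj surj}(i),(ii) and Remark \ref{variety distance}, but those inequalities only control one direction of the interleaving. Concretely, the matching (via the Bauer--Lesnick argument) gives you the $\phi$-direction relation $y\le x\le Y\le X$ between the endpoints of $\mathrm{Supp}(I_s)=[x,X]$ and $\mathrm{Supp}(M_t\Lambda)=[y,Y]$, i.e.\ that $\Hom(I_s,M_t\Lambda)\neq 0$; but a $(\Lambda,\Lambda)$-interleaving also requires a $\psi$-direction map $M_t\to I_s\Lambda$, i.e.\ the dual relation $w\le z\le W\le Z$ with $\mathrm{Supp}(M_t)=[z,Z]$, $\mathrm{Supp}(I_s\Lambda)=[w,W]$, and that the two triangle identities hold. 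Establishing the latter inequalities --- and then checking that $\psi'\Lambda\circ\phi'=\Phi_{I_s}^{\Lambda^2}$ and $\phi'\Lambda\circ\psi'=\Phi_{M_t}^{\Lambda^2}$ for the explicit maps $\phi'=\Phi_{I_s,M_t\Lambda}$, $\psi'=\Phi_{M_t,I_s\Lambda}$ --- is the bulk of the paper's proof, requiring a case analysis on whether $s\in S'$ (i.e.\ $W(I_s)>\epsilon$) and $t\in T'$, and delicate use of the specific form of the maximal translation (Lemma \ref{hom4}) near the top vertex $n$. This is not a corollary of the combinatorial matching; it is the additional algebraic content your proposal must supply to make the argument complete.
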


The proof of the Theorem will consist of three parts.
\begin{enumerate}
\item If $W(I_s) > h(\Lambda)$, then $I_s$ is matched.
\item If $W(M_t) > h(\Lambda)$, then $M_t$ is matched.
\item If $I_s$ and $M_t$ are matched (independent of $W$), then there is a $(\Lambda,\Lambda)$-interleaving between $I_s$ and $M_t$.
\end{enumerate}

Our matching is a slight modification of the matching in \cite{induced_matchings}.  It is given by the following composition (see Definition \ref{canonical})
$$B(I) \xrightarrow{{\Theta(\rho)}^{-1}} B(im(\phi)) \xrightarrow{\Theta(\iota)} B(M\Lambda) \xrightarrow{B(M,M\Lambda)} B(M),$$
where $\iota$ is the inclusion from $im(\phi)$ into $M\Lambda$, $\rho$ is the surjection from $I$ to $im(\phi)$, and $B(M,M\Lambda)$ is the natural inclusion of barcodes induced from $M\Lambda$ to $M$ given by $B(M,M\Lambda)(M_t\Lambda) = M_t$.

Note that in \cite{induced_matchings}, it was only necessary to take the matching as far as $B(M\Lambda)$.  There, that was justified since the assignment which we call $B(M,M\Lambda)$ was a bijection between barcodes which preserved $W$.  In the present context neither of these properties hold.  Specifically, $|B(M\Lambda)|$ may be strictly smaller than $|B(M)|$.  Moreover, either of $W(M_t), W(M_t \Lambda)$ may be strictly larger than the other.  The detailed schematic below displays all relevant convex modules.  This will be useful in the proof.
\vspace*{.5 in}

\begin{center}
\includegraphics[scale=1.5]{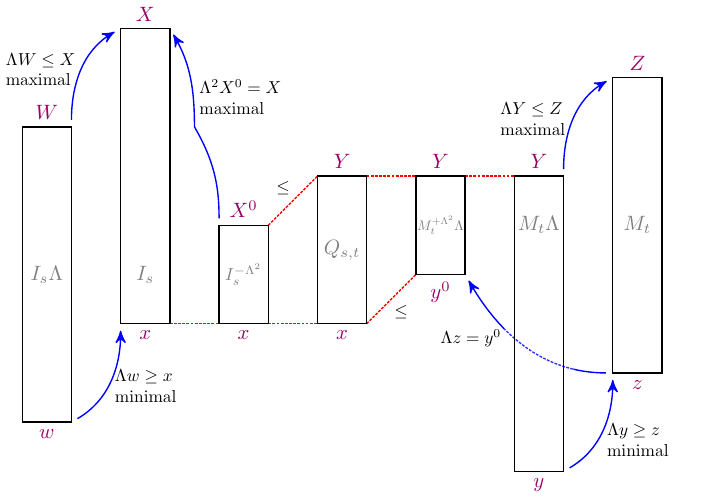}
\end{center}
\medskip
We now prove Theorem \ref{matching}.
\begin{proof}
First, say $I_s \in B(I)$ with $W(I_s) > h(\Lambda)$.  Then, by Lemma \ref{prime},  ${I_s}^{-\Lambda^2} \neq 0$.  Additionally, by Proposition \ref{trim}, and since induced matchings are categorical for surjections, we obtain the commutative triangle of barcodes below.

\begin{center}
\includegraphics[scale=1]{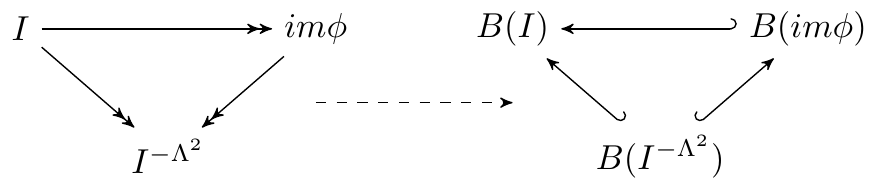}
\end{center}

But the induced matching $B(I^{{-\Lambda}^2}) \to B(I)$ sends $I_s^{-\Lambda^2} \xrightarrow{\Theta} I_s$ up to isomorphism, therefore $I_s$ is matched with an element of $B(im(\phi))$.  That is, $I_s \in im(\Theta(\rho))$.  But then, since $\Theta (\iota)$ and $ B({\Phi}_M^{{\Lambda}})$ are injections of barcodes, $I_s$ is matched with some $M_t \in B(M)$.  This establishes (1).

Next, suppose $M_t \in B(M)$ with $W(M_t) > h(\Lambda)$.  Then, by Lemma \ref{prime}, $M_t^{+\Lambda^2}\Lambda \neq 0$.  Moreover, by Proposition \ref{trim}, and since induced matchings are categorical for injections, we obtain a commutative diagram of barcodes for any choice of admissible emumeration.  It is convenient to specify a particular enumeration for $B(M^{+\Lambda^2}\Lambda)$.  This is done as follows;
\begin{itemize}
\item For $\sigma \in B(M^{+\Lambda^2}\Lambda), \sigma \ncong \sigma_n\Lambda$ (see Lemma \ref{prematching}), there is no restriction on the enumeration restricted to $\{\sigma\}$.
\item For $\sigma \in B(M^{+\Lambda^2}\Lambda), \sigma \cong \sigma_n\Lambda$, enumerate $\{\sigma\}$, by $\sigma_1 = G(\tau_1) \leq G(\tau_2)= \sigma_2$ if and only if $\tau_1 \leq \tau_2$.
\end{itemize}
With this choice of enumeration, we obtain the commutative diagram below.

\begin{center}
\includegraphics[scale=1]{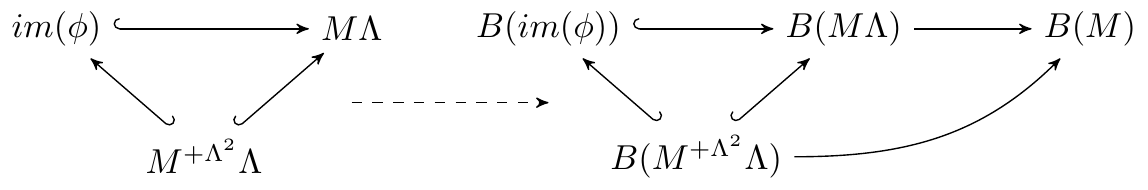}
\end{center}

Since $B(\rho)$ is an injection of barcodes, this proves (2).  

We now prove (3).  First, note that if $I_s$ and $M_t$ are matched with $h(I_s), h(M_t) \leq \epsilon$, then setting $\phi, \psi$ both equal to zero, we obtain a $(\Lambda,\Lambda)$-interleaving between $I_s$ and $M_t$.  Therefore, let
$$S' = \{s: h(I_s) > \epsilon\}\textrm{, and }  T' = \{t: h(M_t) > \epsilon\}.$$

We will write $I_s \updownarrow M_t$ when $I_s$ and $M_t$ are matched.  It remains to show that if $I_s \updownarrow M_t$, then there is a $(\Lambda,\Lambda)$-interleaving between $I_s$ and $M_t$ when,
\begin{itemize}
\item [(a)] $s \in S', t \notin T'$,
\item [(b)] $s \notin S', t \in T'$, or
\item [(c)] $s \in S', t \in T'$
\end{itemize}

Note that because of the asymmetry associated with the matching, the cases (b.) and (c.) are not identical.  Let the supports of $I_s, I_s \Lambda, M_t$ and $M_t \Lambda$ be given by $[w, W], [x,X], [y,Y]$, and $[z,Z]$ respectively.  When $s \in S'$, let $X^0$ be maximal such that $\Lambda^2 X^0 \leq X$.  That is, ${I_s^{-{\Lambda}^2}}$ has support given by $[x, X^0]$.  Similarly, when $t \in T'$, let $y^0 = \Lambda z$, so then $M_t^{+\Lambda^2}\Lambda$ has support given by $[y^0,Y]$.  Note that if $z \notin im(\Lambda)$, we have that $\Lambda z < \Lambda^2 y$ and $y = m$.

Proceeding as in \cite{induced_matchings}, by Proposition \ref{inj surj}, if $I_s \updownarrow M_t$, then we have the relations
$$y \leq x \leq Y \leq X. $$
Hence, there is a non-zero homomorphism from $I_s \to M_t\Lambda$.  Therefore, set $\Phi_{I_s,M_t\Lambda} = \chi([x,Y]) = {\phi}'$.  This will be one of our interleaving morphisms.  We next define our second interleaving morphism.  We must show that if one of (a), (b), or (c) is satisfied, we have the relations,
$$w \leq z \leq W \leq Z .$$

By inspection, it suffices to show the following statements:
\begin{enumerate}[(i)]
\item If $t\in T'$, then $w\leq z$.
\item If $s\in S'$, then $z\leq W$ and $W\leq Z$.
\item If $s\in S'$ and $t\not\in T'$, then $w\leq z$.
\item If $s\not\in S'$ and $t\in T'$, then $z\leq W$ and $W\leq Z$.
\end{enumerate}

We now prove (i) through (iv).  First, if $t \in T'$, then $\Lambda z=y_0$. Also, $w$ is minimal such that $\Lambda w\geq x$. As $x\leq y_0$, $x\leq\Lambda z$, and so $w\leq z$ by minimality.  This proves (i).  

Next, say  $s \in S'$.  Then, $z\leq\Lambda y$ by definition. Also, since $x \leq y$ we have that $\Lambda y\leq\Lambda x$. As $W$ is maximal such that $\Lambda W\leq X$, and $s\in S'$, we have $\Lambda^2x\leq X$. Therefore, $\Lambda x \leq W$.  Since $\Lambda(\Lambda x)\leq X$, $\Lambda x\leq W$. Therefore $z\leq\Lambda y\leq\Lambda x\leq W \textrm{ as required.} $  Continuing, since $s\in S'$, $X_0$ is maximal such that $\Lambda^2X_0=X$. By the maximality of $W$, $\Lambda X_0=W$. But then we have $W=\Lambda X_0\leq\Lambda Y\leq Z,$ since $im(\phi)$ includes into $I^{-\Lambda^2}$.  This proves (ii).

Now, suppose $s\in S'$, $t\not\in T'$.  If $x\geq\Lambda m$, then $\Lambda w=x$, and so $\Lambda^3w=\Lambda^2x\leq X$, since $s \in S'$. But then, $W\geq\Lambda^2w$.  Hence, since $t\not\in T'$, we have $\Lambda^2w\leq W\leq Z<\Lambda^2z$.  The result follows from monoticity.  On the other hand, if $x<\Lambda m$, then $w=m$, so $w\leq z$.  Thus we have shown (iii).

Lastly, say $s \notin S', t \in T'$.  We must establish $z\leq W$ and $W\leq Z$.  First, since $t\in T'$, we have that $\Lambda z=y_0\leq Y\leq X$.  Since $W$ is maximal with $\Lambda W\leq X$, it follows that $z\leq W$.  Next, note that if $t \in T'$, then $\Lambda W = X$, since $\Lambda W \neq X \implies X \notin im(\Lambda)$.  Then $\Lambda^2z=\Lambda(\Lambda z)\leq Z$, so $Y\geq\Lambda z$.  Therefore $X$ is in $im(\Lambda)$ so it must be the case that $\Lambda W=X$.  But then since $s\not\in S', t \in T'$, we have $\Lambda W=X<\Lambda^2x\leq\Lambda^2y_0=\Lambda^3z\leq\Lambda Z.$ Therefore, $\Lambda W\neq n$, so by monoticity $W<Z$ as required.  This proves (iv).

Thus, we have shown that if $s\in S'$ or $t\in T'$,
$$w \leq z \leq W \leq Z .$$

Therefore, set $\Phi_{M_t, I_s\Lambda} = \chi([z,W]) = {\psi}'$.  This will be our second interleaving morphism.  It now remains only to show that 
$${\psi}'\Lambda \circ {\phi}' = {\Phi}_{I_s}^{{\Lambda}^2} \textrm{, and } {\phi}'\Lambda \circ {\psi}' = {\Phi}_{M_t}^{{\Lambda}^2} .$$

Thus, we have;
$$\phi'\Lambda=\chi([x,Y])\Lambda=\chi([w,Y^*])\text{,  and  }\psi'\Lambda=\chi([z,W])\Lambda=\chi([y,W^*])\textrm{, where}$$
$Y^*$ is maximal such that $\Lambda Y^*\leq Y$, and $W^*$ is the maximal with $\Lambda W^*\leq W$.

We now proceed to establish the required commutativity conditions.  First, say $s\in S'$.  We will show that $\psi'\Lambda\circ\phi'={\Phi}_{I_s,I_s{\Lambda}^2}= \chi([x,W^*])$.  Note that, by definition, $\psi'\Lambda\circ\phi'$ is a composition of module homomorphisms, and, hence, a module homomorphism.  Therefore, by Lemma \ref{hom2}, we need only show that the linear map $\chi([x,W^*])$ is non-zero at any vertex.  To do this, we will establish that $x\leq W^*$ (that is, $\chi([x,W^*])$ is non-zero at $x$).  But, $s\in S' \implies \Lambda^2x\leq X$. As $W^*$ is maximal with $\Lambda^2W^*=X$, the inequality follows.  Now say $s\notin S'$.  Then, $\psi'\Lambda\circ\phi'=0$ as required.

We now show the commutativity of the other triangle.  First, suppose that $t\in T'$.  As above, we will show that $\phi'\Lambda\circ\psi'={\Phi}_{M_t,M_t{\Lambda}^2}= \chi([z,Y^*])$.  Again, we need only demonstrate that $z\leq Y^*$. But $t\in T' \implies \Lambda^2z\leq Z$.  SInce $Y^*$ is maximal with $\Lambda^2Y^*=Z$, the result follows.  Again, if $t \notin T'$, the result is trivial.

Therefore, if $I_s \updownarrow M_t$, then there is a $(\Lambda,\Lambda)$-interleaving between $I_s$ and $M_t$ as required.  This proves (3) and finishes the proof of the theorem.
\end{proof}

In the next section we will use Theorem \ref{matching} to prove our main result.
\section{Proof of Main Results}
\label{main section}

Before proving the main results, we establish some useful facts.  This first result will allow us to make a "half matching."
\begin{lemma}
\label{function}
Let $S, T$ be sets with $S$ finite, let $x: S \to \mathscr{P}(T)$ be a function such that for all $\phi \neq S_0 \subseteq S$, 
$$\Big|\bigcup_{s \in S_0} x(s) \Big| \geq |S_0|_.$$

Then, there exists a function $F: S \to T$ such that $F$ is an injection, and for all $s, F(s) \in x(s)$.

\end{lemma}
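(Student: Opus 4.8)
The statement is precisely Hall's marriage theorem, in the form where the indexing set $S$ is finite while $T$ is allowed to be arbitrary, so my plan is to run the classical induction on $|S|$. The base case $|S| = 1$ is immediate: applying the hypothesis to $S_0 = S$ gives $|x(s)| \ge 1$ for the unique $s \in S$, and one takes $F(s)$ to be any element of $x(s)$. For the inductive step I would assume the result for all indexing sets of strictly smaller cardinality and split into two cases according to whether or not a ``critical'' proper subset exists.

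\emph{Case 1: every proper nonempty $S_0 \subsetneq S$ satisfies the strict inequality $\left|\bigcup_{s\in S_0}x(s)\right| \ge |S_0|+1$.} Here I would choose any $s_0 \in S$ and, using $|x(s_0)| \ge 1$, any $t_0 \in x(s_0)$, and set $F(s_0) = t_0$. Then I apply the inductive hypothesis to $S' = S \setminus \{s_0\}$ with $x'(s) := x(s)\setminus\{t_0\}$: for every nonempty $S_0 \subseteq S'$ (a proper subset of $S$) one has $\left|\bigcup_{s\in S_0}x'(s)\right| \ge \left|\bigcup_{s\in S_0}x(s)\right| - 1 \ge |S_0|$, so the hypothesis holds for $(S',x')$ and yields an injection $F' : S' \to T$ with $F'(s) \in x'(s) \subseteq x(s)$ and $t_0 \notin \operatorname{im}(F')$. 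Then $F := F' \cup \{s_0 \mapsto t_0\}$ is an injection with $F(s) \in x(s)$ for all $s \in S$.

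\emph{Case 2: there is a proper nonempty $S_0 \subsetneq S$ with $T_0 := \bigcup_{s\in S_0}x(s)$ satisfying $|T_0| = |S_0|$.} The hypothesis restricts to $(S_0, x|_{S_0})$, so by induction there is an injection $F_0 : S_0 \to T$ with $F_0(s) \in x(s)$; necessarily $\operatorname{im}(F_0) \subseteq T_0$, and since $|T_0| = |S_0| < \infty$ the map $F_0$ is a bijection of $S_0$ onto $T_0$. Next I set $S_1 = S \setminus S_0$ and $x_1(s) := x(s) \setminus T_0$ for $s \in S_1$, and verify the hypothesis for $(S_1, x_1)$: for nonempty $S_2 \subseteq S_1$ the set $\bigcup_{s \in S_0 \cup S_2} x(s)$ is the disjoint union $T_0 \sqcup \bigcup_{s\in S_2}x_1(s)$, hence $|S_0| + |S_2| = |S_0 \cup S_2| \le |T_0| + \left|\bigcup_{s\in S_2}x_1(s)\right|$, which gives $\left|\bigcup_{s\in S_2}x_1(s)\right| \ge |S_2|$. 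By induction there is an injection $F_1 : S_1 \to T$ with $F_1(s) \in x_1(s) \subseteq x(s)$, and $\operatorname{im}(F_1) \cap T_0 = \emptyset$ by construction. Since $\operatorname{im}(F_0) \subseteq T_0$ and $\operatorname{im}(F_1) \subseteq T \setminus T_0$ are disjoint, $F := F_0 \cup F_1 : S \to T$ is a well-defined injection with $F(s) \in x(s)$ for every $s$, completing the induction.

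The one step I would flag as the crux is the verification in Case 2 that excising a critical set $S_0$ together with its image $T_0$ leaves the hypothesis intact on the complement $S_1$; this is exactly where the equality $|T_0| = |S_0|$ and the disjointness of $T_0$ from $\bigcup_{s\in S_2}x_1(s)$ get used in tandem. Everything else is bookkeeping, and no finiteness of $T$ is needed, since all cardinality comparisons take place inside finite sets — in Case 2 the set $T_0$ is finite of cardinality $|S_0|$.
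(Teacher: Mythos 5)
Your proof is correct and follows essentially the same strategy as the paper's: Hall's marriage theorem established by induction on $|S|$ with the classical dichotomy between the existence and nonexistence of a ``critical'' subset where equality holds. The only organizational difference is minor: you restrict Case 2 to a \emph{proper} critical $S_0$ so you can apply the inductive hypothesis to $(S_0, x|_{S_0})$ directly, whereas the paper picks a \emph{minimal} critical $S_0$ (possibly all of $S$) and builds the matching on it by removing one element at a time, using minimality to preserve the Hall condition on the remainder; both choices lead to the same verification that excising $S_0$ and its image $T_0$ keeps the Hall condition on the complement.
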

\begin{proof}
We prove the result by induction on $|S|$.  If $|S| = 1$, the result is trivial.  Now say $|S| > 1$ and the result holds for all sets with smaller cardinality.  First, suppose there exists a non-empty subset $S_0 \subseteq S$ such that
$$\Big|\bigcup_{s \in S_0} x(s) \Big| = |S_0|_.$$ 
Let $S_0$ be a minimal non-empty subset of $S$ where equality holds.  We will show that we can define an injection $f$ from $S_0$ to $T$ with $f(s) \in x(s)$.  Pick $s_0 \in S_0$, $t_0 \in x(s_0)$ and set $f(s_0) = t_0$.  If $S_0 = \{s_0\}$ we are done, so assume $S_0 \neq \{s_0\}$.  Then, let $\bar{x}: S_0 - \{s_0\} \to \mathscr{P}(T)$, be defined by $\bar{x}(s) = x(s) - \{t_0\}$.  Now let $S'$ be a non-empty subset of $S_0 - \{s_0\}$.  Then,
\begin{eqnarray*}
\Big|\bigcup_{s \in S'} \bar{x}(s) \Big| &=& \Big|\bigcup_{s \in S'} x(s) - \{t_0\}\Big| =\Big|\big(\bigcup_{s \in S'} x(s)\big) - \{t_0\} \Big| 
\geq  |S'| +1 - 1 = |S'|,
\end{eqnarray*}
by the minimality of $S_0$.  Thus, by induction, there exists a one-to-one  function $f : S_0 - \{s_0\} \to T$ such that $f(s) \in \bar{x}(s)$.  Clearly, $f$ can be extended to an injection on all of $S_0$ .  If $S_0 = S$, set $f = F$ and we are done.  Otherwise, define $$\bar{x} : S-S_0 \to \mathscr{P}(T) \textrm{ be defined by } \bar{x} (s) = x(s) - \{f(\sigma): \sigma \in S_0\}.$$
Now, let $\bar{s_1}, \bar{s_2}, ... \bar{s_x} \in S-S_0$.  Clearly, for all $i$, $x(\bar{s_i}) = \bar{x}(\bar{s_i}) \cup T_i$ for some set $T_i \subseteq \{f(\sigma): \sigma \in S_0\}.$  Note that $$\Big| \bigcup_{i \leq k}\bar{x}(\bar{s_i})\Big| < k \implies \Big| \bigcup_{s \in S_0} x(s) \cup x(\bar{s_1}) \cup x(\bar{s_2}) \cup ... \cup x(\bar{s_k})  \Big| < |S_0| + k,$$
a contradiction.  Thus, by induction, there is an injection $\bar{f} : S-S_0 \to T$ with $\bar{f}(s) \in \bar{x} (s)$.  By construction $F = f \cup \bar{f}$ is the desired function from all of $S$ to $T$.

On the other hand, if $S$ has the property that for all $S_0 \subseteq S$, $S_0 \neq \phi$, 
$$\Big|\bigcup_{s \in S_0} x(s) \Big| > |S_0|_,$$
pick $s_1 \in S, t_1 \in x(s_1)$ and set $f(s_1) = t_1$.
Again, let
$$\bar{x} : S-\{s_1\} \to \mathscr{P}(T) \textrm{ be defined by } \bar{x} (s) = x(s) - \{t_1\}.$$
Then, for $S_0 \subseteq S-\{s_1\}$,

$$ \displaystyle \Big| \bigcup_{s \in S_0} \bar{x}(s) \Big| = \Big| \bigcup_{s \in S_0} x(s) - \{t_1\} \Big| \geq |S_0| + 1 -1 = |S_0|.$$

Since $|S-\{s_1\}| <|S|$, the result holds by induction.
\end{proof}
\begin{ex}
$S = \{1,2,3,4,5\}, T=\{a,b,c,d,e\}$ the function $x$ given by 
$$1 \to \{a,b,d \}, 2 \to \{b,c, e\}, 3 \to \{a, c,d\},  4 \to \{d\}, 5 \to \{e\}.$$
A matching is constructed by setting $f(4) = d,$ and $ f(5) = e$.  Then, one can choose any bijection from $\bar{f}: \{1,2,3\} \to \{a,b,c\}.$  We glue $f$ and $\bar{f}$ to obtain an injection $F$ from $S$ to $T$.
\end{ex}

Next we make a simple observation about interleavings.

\begin{lemma}
\label{diagonalize}
Let $P$ be any poset, $\Lambda, \Gamma \in \mathcal{T}(P)$.  Let $A, B, C, D$ be any $A(P)$-modules with $\phi, \psi$ a $(\Lambda,\Gamma)$-interleaving between $A \oplus B$ and $C \oplus D$.  Then, if $\textrm{Hom}(A,D\Lambda) = 0 = \textrm{Hom}(C,B\Gamma)$, then $A, C$ are $(\Lambda,\Gamma)$-interleaved and $B, D$ are $(\Lambda,\Gamma)$-interleaved.
\end{lemma}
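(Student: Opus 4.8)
The plan is to show that the two vanishing hypotheses force $\phi$ and $\psi$ to be ``block upper triangular'' with respect to the given direct sum decompositions, and that the two interleaving identities then restrict, on the diagonal blocks, to the asserted interleavings between $A,C$ and between $B,D$.

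First I would decompose $\phi$ and $\psi$ into matrices of component homomorphisms, using the compatibility of module homomorphisms with direct sum decompositions recalled in Subsection \ref{algebra}. Writing
\[
\phi=\begin{pmatrix}\phi_{11}&\phi_{12}\\ \phi_{21}&\phi_{22}\end{pmatrix}\colon A\oplus B\longrightarrow (C\oplus D)\Lambda=C\Lambda\oplus D\Lambda,\qquad
\psi=\begin{pmatrix}\psi_{11}&\psi_{12}\\ \psi_{21}&\psi_{22}\end{pmatrix}\colon C\oplus D\longrightarrow (A\oplus B)\Gamma=A\Gamma\oplus B\Gamma,
\]
so that $\phi_{21}\colon A\to D\Lambda$ and $\psi_{21}\colon C\to B\Gamma$, the hypotheses $\Hom(A,D\Lambda)=0$ and $\Hom(C,B\Gamma)=0$ give $\phi_{21}=0$ and $\psi_{21}=0$. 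Thus both $\phi$ and $\psi$ are upper triangular.

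Next I would carry out the two matrix multiplications coming from the interleaving diagrams. The point to verify carefully is that the ``horizontal'' structure morphisms $A\oplus B\to (A\oplus B)\Gamma\Lambda$ and $C\oplus D\to (C\oplus D)\Lambda\Gamma$ are \emph{block diagonal}, i.e.\ each is the direct sum of the corresponding structure morphism of the two summands; this is immediate since the $\mathcal{T}(P)$-action and the canonical maps $p\le\Gamma\Lambda p$ are applied coordinatewise while direct sums of modules are formed pointwise. Granting this, $(\psi\Lambda)\circ\phi$ is again upper triangular with diagonal entries $\psi_{11}\Lambda\circ\phi_{11}$ and $\psi_{22}\Lambda\circ\phi_{22}$, and $(\phi\Gamma)\circ\psi$ is upper triangular with diagonal entries $\phi_{11}\Gamma\circ\psi_{11}$ and $\phi_{22}\Gamma\circ\psi_{22}$. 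Equating diagonal entries with the diagonal structure morphisms yields
\[
\psi_{11}\Lambda\circ\phi_{11}=(A\to A\Gamma\Lambda),\quad \phi_{11}\Gamma\circ\psi_{11}=(C\to C\Lambda\Gamma),\quad \psi_{22}\Lambda\circ\phi_{22}=(B\to B\Gamma\Lambda),\quad \phi_{22}\Gamma\circ\psi_{22}=(D\to D\Lambda\Gamma).
\]
These four identities say precisely that $(\phi_{11},\psi_{11})$ is a $(\Lambda,\Gamma)$-interleaving between $A$ and $C$ and that $(\phi_{22},\psi_{22})$ is a $(\Lambda,\Gamma)$-interleaving between $B$ and $D$, which is the claim.

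I do not expect a genuine obstacle here: the whole argument is the elementary observation that a product of block upper triangular matrices is block upper triangular, and that if such a product equals a block diagonal matrix then its diagonal blocks match. The only step requiring (entirely routine) care is justifying that the structure morphism $F\to F\Gamma\Lambda$ of the definition splits as a direct sum along a decomposition $F=A\oplus B$; once that is in hand, the off-diagonal entries of the products (which involve $\phi_{12},\phi_{22}$ and the $\psi$'s) impose nothing we need to use.
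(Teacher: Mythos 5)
Your proof is correct and follows essentially the same route as the paper's: you decompose $\phi,\psi$ into $2\times 2$ matrices of component homomorphisms, use the two vanishing hypotheses to kill $\phi_{21}\colon A\to D\Lambda$ and $\psi_{21}\colon C\to B\Gamma$, multiply the resulting block upper triangular matrices, and equate diagonal blocks with the block-diagonal structure morphisms. The only cosmetic difference is that the paper phrases the conclusion as ``setting $\phi_C^B$ and $\psi_A^D$ to zero still gives an interleaving'' (so the diagonal blocks of the modified maps are the desired interleavings), whereas you read the four diagonal identities directly off the products without altering $\phi$ and $\psi$ at all — a marginally cleaner presentation of the same observation.
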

\begin{proof}
Note that we do not assume any modules are in the category $\mathcal{C}$.  For brevity, let $f_A, f_B$ denote the canonical homomorphism from $A \to A\Gamma \Lambda$ and $B \to B\Gamma \Lambda$ respectively.  Similarly, let $g_C, g_D$ denote $C \to C \Lambda \Gamma$ and $D \to D \Lambda \Gamma$ respectively.  By decomposing $\phi$, $\psi$ into their component homomorphisms, we have;
\begin{eqnarray*}
&&\begin{bmatrix} f_A & 0 \\ 0 & f_B \end{bmatrix} = \begin{bmatrix} \psi_A^C \Lambda & \psi_A^D \Lambda \\ 0 & \psi_B^D \Lambda \end{bmatrix} \begin{bmatrix}\phi_C^A & \phi_C^B \\ 0 & \phi_D^B  \end{bmatrix} =  \begin{bmatrix} \psi_A^C \Lambda \phi_C^A & \psi_A^C \Lambda \phi_C^B + \psi_A^D \Lambda \phi_D^B \\ 0 & \psi_B^D \Lambda \phi_D^B \end{bmatrix}_{,}\\[5pt]
&&\begin{bmatrix} g_C & 0 \\ 0 & g_D\end{bmatrix} = \begin{bmatrix} \phi^A_C \Gamma & \phi_C^B  \Gamma \\ 0 & \phi_D^B \Gamma \end{bmatrix} \begin{bmatrix}\psi_A^C & \psi_A^D\\ 0 & \psi_B^D  \end{bmatrix} =  \begin{bmatrix} \phi_C^A \Gamma \psi_A^C & \phi_C^A \Gamma \psi_A^D + \phi_C^B \Gamma \psi_B^D \\ 0 & \phi_D^B \Gamma \psi_B^D \end{bmatrix} _.
\end{eqnarray*}
\medskip

Thus, by inspection, if we set $\phi_C^B, \psi_A^D = 0$, the required condition will still be satisfied.
\end{proof}

We point out that this does not say that the interleaving was initially diagonal (see Example \ref{ex 3}).

\begin{corollary}
\label{diagonalize cor}
Let $P$ be a finite poset with a unique minimal element $m$.  Let $X, Y \in \mathcal{C}$, and $\Lambda$ be a translation.  Suppose $X = \bigoplus_s X_s,$ and $ Y = \bigoplus_t Y_t$ are $(\Lambda, \Lambda)$-interleaved, and $\Lambda m = m$.  Let $S_m = \{ s \in S: X_s(m) \neq 0\}, T_m = \{t \in T: Y_t(m) \neq 0\}$.  Then, 
$$\displaystyle \bigoplus_{s \in S_m} X_s, \bigoplus_{t \in T_m }Y_t \textrm{ are }(\Lambda, \Lambda)\textrm{-interleaved, and }\bigoplus_{s \notin S_m}X_s, \bigoplus_{t \notin T_m}Y_t \textrm{ are }(\Lambda, \Lambda)\textrm{-interleaved.}$$
\end{corollary}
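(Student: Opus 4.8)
The plan is to reduce this to Lemma \ref{diagonalize} by checking that its hypothesis $\mathrm{Hom}(A, D\Lambda) = 0 = \mathrm{Hom}(C, B\Gamma)$ holds with the specific choices $A = \bigoplus_{s \in S_m} X_s$, $B = \bigoplus_{s \notin S_m} X_s$, $C = \bigoplus_{t \in T_m} Y_t$, $D = \bigoplus_{t \notin T_m} Y_t$, and $\Gamma = \Lambda$. Since $\mathrm{Hom}(\bigoplus X_s, \bigoplus (Y_t\Lambda))$ decomposes as $\bigoplus_{s,t} \mathrm{Hom}(X_s, Y_t\Lambda)$, it suffices to show that $\mathrm{Hom}(X_s, Y_t\Lambda) = 0$ whenever $s \in S_m$ and $t \notin T_m$, and symmetrically $\mathrm{Hom}(Y_t, X_s\Lambda) = 0$ whenever $t \in T_m$ and $s \notin S_m$. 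Note that $\Lambda m = m$ guarantees (by Lemma \ref{unique min}) that $X_s\Lambda$ and $Y_t\Lambda$ are convex or zero, so the homomorphism spaces between them are governed by the support criteria established in Section \ref{homomorphisms and translations}.

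First I would record the key observation: if $N$ is a convex module and $m \in \mathrm{Supp}(N)$, then since $\Lambda m = m$ we have $m \in \mathrm{Supp}(N\Lambda)$ as well (provided $N\Lambda \ne 0$, which holds here because $m$ witnesses non-triviality); and if $m \notin \mathrm{Supp}(N)$ then the convex module $N$ is supported in some $(m, M_i]$, hence so is $N\Lambda$, so $m \notin \mathrm{Supp}(N\Lambda)$. Thus the property "$m$ is in the support" is preserved by the action of $\Lambda$ on convex modules. Next, by Remark \ref{minimal}, the support of any convex module for an $n$-Vee has a unique minimal element, and a non-zero homomorphism between convex modules $I \to J$ forces $\mathrm{Supp}(I) \cap \mathrm{Supp}(J) \ne \emptyset$ and, by the analysis preceding Lemma \ref{hom2} (specifically that a non-zero homomorphism's image must contain the minimal element of $\mathrm{Supp}(I)$), the minimal element of $\mathrm{Supp}(I)$ must lie in $\mathrm{Supp}(J)$. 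Now suppose $s \in S_m$, so $X_s$ is convex with $m \in \mathrm{Supp}(X_s)$, hence $m$ is the (unique) minimal element of $\mathrm{Supp}(X_s)$. If $\mathrm{Hom}(X_s, Y_t\Lambda) \ne 0$, then $m \in \mathrm{Supp}(Y_t\Lambda)$, which by the observation above forces $m \in \mathrm{Supp}(Y_t)$, i.e. $t \in T_m$ — contradicting $t \notin T_m$. The symmetric argument gives $\mathrm{Hom}(Y_t, X_s\Lambda) = 0$ for $t \in T_m$, $s \notin S_m$.

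With both vanishing statements in hand, Lemma \ref{diagonalize} (applied with $\Gamma = \Lambda$) immediately yields that $A = \bigoplus_{s \in S_m} X_s$ and $C = \bigoplus_{t \in T_m} Y_t$ are $(\Lambda, \Lambda)$-interleaved, and $B = \bigoplus_{s \notin S_m} X_s$ and $D = \bigoplus_{t \notin T_m} Y_t$ are $(\Lambda, \Lambda)$-interleaved, as claimed. I do not expect any real obstacle here; the only mild subtlety is being careful that the relevant modules $X_s\Lambda$, $Y_t\Lambda$ could be zero, but in that case the corresponding homomorphism space is trivially zero and the argument is unaffected, and the modules indexed by $S_m$, $T_m$ are never zero since $m$ lies in their support and is fixed by $\Lambda$. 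One should also note that Lemma \ref{diagonalize} is stated for arbitrary $A(P)$-modules, so no issue arises from the direct sums $A, B, C, D$ not being single convex modules.
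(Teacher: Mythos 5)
Your proof is correct and takes the same route as the paper, which simply cites Lemma~\ref{diagonalize}; you supply the verification of its hypothesis, which is exactly what is needed. One small caution: you occasionally invoke $n$-Vee-specific facts (Remark~\ref{minimal}, the decomposition into segments $(m,M_i]$, and Lemma~\ref{unique min}) even though the corollary only assumes that $P$ has a unique minimal element. These detours are unnecessary and can be avoided: since $\Lambda m = m$, one has $(Y_t\Lambda)(m) = Y_t(\Lambda m) = Y_t(m)$, so membership of $m$ in the support is preserved by $\Lambda$ with no structural assumption on $P$; and because $m$ is the unique minimal element of the whole poset $P$, a convex module $N$ with $m \in \mathrm{Supp}(N)$ is cyclic, generated by its basis vector at $m$, so any homomorphism out of $N$ is determined by the image of that generator, which must lie in the value space at $m$. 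Thus $\mathrm{Hom}(X_s, Y_t\Lambda) = 0$ whenever $s \in S_m$, $t \notin T_m$, and $\mathrm{Hom}(Y_t, X_s\Lambda) = 0$ whenever $t \in T_m$, $s \notin S_m$, which is the hypothesis of Lemma~\ref{diagonalize} with $\Gamma = \Lambda$.
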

\begin{proof}
This follows easily from Lemma \ref{diagonalize}. 
\end{proof}

\begin{prop}
\label{propfix}
Let $P$ be an $n$-Vee.  Let $I = \bigoplus\limits_{s \in S}I_s, M = \bigoplus\limits_{t\in T} M_t$ be in $\mathcal{C}$.  Suppose for all $s, t,  I_s$ and $M_t$ are supported at $m$.  Let $\Lambda \in \mathcal{T}(P)$ with $\Lambda m = m$.  Suppose there exists a $(\Lambda, \Lambda)$-interleaving between $I$ and $M$.  Then there exists a $h(\Lambda)$ matching (in the sense of Theorem \ref{matching}) from $B(I)$ to $B(M)$.  
\end{prop}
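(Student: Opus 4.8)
The plan is to turn the statement into a bipartite matching problem and solve it with the Hall-type Lemma~\ref{function}, exploiting heavily that $\Lambda m=m$ and that every summand of $I$ and $M$ contains the unique minimal vertex $m$ in its support.

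\textbf{Step 1: counting at the vertex $m$.} Decompose the given $(\Lambda,\Lambda)$-interleaving into component homomorphisms $\phi^s_t\colon I_s\to M_t\Lambda$ and $\psi^t_s\colon M_t\to I_s\Lambda$. By Lemma~\ref{hom1} (using $\Lambda m=m$) each $I_s\Lambda$ and $M_t\Lambda$ is convex, and since $m\in\textrm{Supp}(I_s)\cap\textrm{Supp}(M_t)$ we have $m$ in all of these supports, so none of them is zero. Now evaluate the two commuting triangles of the interleaving at the vertex $m$: every stalk at $m$ is one--dimensional, every structure map and every $\Phi_{-,-}$ restricts there to the identity of $K$, and the canonical maps $\Phi_{I_s}^{\Lambda^2}$ and $\Phi_{M_t}^{\Lambda^2}$ restrict at $m$ to $I_s(m\le m)=\textrm{id}$ and $M_t(m\le m)=\textrm{id}$. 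Hence, writing $A=[\phi^s_t|_m]$ and $B=[\psi^t_s|_m]$ as $K$-matrices indexed by $T\times S$ and $S\times T$, the interleaving identities become $BA=\textrm{Id}_S$ and $AB=\textrm{Id}_T$. In particular $|S|=|T|$ and $A$ is invertible with inverse $B$.

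\textbf{Step 2: a single interleaved pair.} I claim that whenever $\textrm{Hom}(I_s,M_t\Lambda)\neq0$ and $\textrm{Hom}(M_t,I_s\Lambda)\neq0$, the pair $I_s,M_t$ is $(\Lambda,\Lambda)$-interleaved via the canonical maps $\Phi_{I_s,M_t\Lambda}$ and $\Phi_{M_t,I_s\Lambda}$. Indeed each required composite is a homomorphism between convex modules (invoking Lemma~\ref{hom1} so that $I_s\Lambda^2$ and $M_t\Lambda^2$ are convex and the $\Lambda$-action is available), hence by Lemma~\ref{hom2} it is a scalar multiple of $\Phi_{I_s}^{\Lambda^2}$, respectively $\Phi_{M_t}^{\Lambda^2}$; evaluating at $m$, where both composites and both canonical maps are the identity, pins the scalar to $1$. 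So it suffices to locate enough such pairs.

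\textbf{Step 3: the matching.} Since $|S|=|T|$, any \emph{bijection} $\sigma\colon S\to T$ with $I_s$ and $M_{\sigma(s)}$ $(\Lambda,\Lambda)$-interleaved for all $s$ is an $h(\Lambda)$-matching: it leaves nothing unmatched, so conditions (i) and (ii) defining an $\epsilon$-matching are vacuous, and (iii) holds because a $(\Lambda,\Lambda)$-interleaving has height $h(\Lambda)$. By Step 2 it is enough to find a bijection $\sigma$ with $\textrm{Hom}(I_s,M_{\sigma(s)}\Lambda)\neq0$ and $\textrm{Hom}(M_{\sigma(s)},I_s\Lambda)\neq0$ for all $s$, and since $A_{t,s}\neq0$ forces the first while $B_{s,t}\neq0$ forces the second, it is enough to find a bijection $\sigma$ with $A_{\sigma(s),s}\neq0$ and $B_{s,\sigma(s)}\neq0$ for all $s$. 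Put $x(s)=\{t : A_{t,s}\neq0\ \textrm{and}\ B_{s,t}\neq0\}$. From $BA=\textrm{Id}_S$ one gets $\sum_{t}A_{t,s}B_{s,t}=1$, so $x(s)\neq\varnothing$; more generally, using that both $A$ and $B=A^{-1}$ are invertible one verifies the Hall inequality $\bigl|\bigcup_{s\in S_0}x(s)\bigr|\geq|S_0|$ for every nonempty $S_0\subseteq S$ — equivalently, the entrywise product of $A$ with the transpose of $B$, all of whose row- and column-sums equal $1$, has a nonzero generalized diagonal. Lemma~\ref{function} then yields an injection $\sigma\colon S\to T$ with $\sigma(s)\in x(s)$, which by $|S|=|T|$ is a bijection, completing the proof.

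\textbf{Main obstacle.} The technical heart is the Hall inequality in Step 3: that the supports of an invertible matrix and of its inverse always admit a common transversal (a Frobenius--K\"onig-type statement). The hypothesis that every summand of $I$ and $M$ is supported at $m$ is exactly what turns the degree-$m$ data into a genuine invertible matrix, so that this clean linear-algebra input applies; if one tried to drop that hypothesis one would instead have to descend along the quotients $-/\mathcal{I}_jM$ to the totally ordered intervals $[m,M_j]$ and apply Theorem~\ref{matching} branchwise, at the nontrivial cost of reconciling widths and translation heights across the branches.
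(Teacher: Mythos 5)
Steps 1 and 2 of your proposal are correct: evaluating the interleaving identities at $m$ (where $\Lambda m = m$ makes everything one-dimensional and the structure maps trivial) gives $BA=\Id_S$, $AB=\Id_T$, hence $|S|=|T|$; and the scalar-pinning argument via Lemma~\ref{hom2} and evaluation at $m$ does show that if $\Hom(I_s,M_t\Lambda)$ and $\Hom(M_t,I_s\Lambda)$ are both nonzero then the canonical morphisms give a $(\Lambda,\Lambda)$-interleaving of the pair.

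The gap is in Step 3: the linear-algebra fact you invoke is false. You assert that because $A$ is invertible and $B=A^{-1}$, the entrywise product $D_{t,s}=A_{t,s}B_{s,t}$, which has all row and column sums equal to $1$, must have a nonzero generalized diagonal. That statement is the Birkhoff--von Neumann / Frobenius--K\"onig phenomenon, and it is true for \emph{nonnegative real} (doubly stochastic) matrices; over a general field $K$, cancellation can create forbidden zero blocks. A concrete counterexample: take
\[
A=\begin{pmatrix}1&0&1\\0&1&1\\1&1&1\end{pmatrix},\qquad
B=A^{-1}=\begin{pmatrix}0&-1&1\\-1&0&1\\1&1&-1\end{pmatrix},\qquad
D=\begin{pmatrix}0&0&1\\0&0&1\\1&1&-1\end{pmatrix}.
\]
All row- and column-sums of $D$ equal $1$, yet the $2\times2$ block in rows $\{1,2\}$ and columns $\{1,2\}$ is zero, so $x(1)=x(2)=\{3\}$ and Hall's condition fails at $S_0=\{1,2\}$. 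This situation is realizable in the module setting of the Proposition: let all six summands $I_s,M_t$ be the simple module at $m$ and $\Lambda$ any translation fixing $m$; then every invertible $A$ with $B=A^{-1}$ defines a $(\Lambda,\Lambda)$-interleaving, including this one. The conclusion of the Proposition still holds there (any bijection works, since all six modules are isomorphic), but your argument genuinely stalls: Lemma~\ref{function} does not apply because its hypothesis fails. The hypothesis that every summand is supported at $m$ gives you a single invertible matrix, but it does not rule out cancellation; to repair the strategy you would need a different verification of the Hall condition — for example, enlarging $x(s)$ to the set of $t$ for which $I_s$ and $M_t$ are $(\Lambda,\Lambda)$-interleaved by \emph{some} interleaving (not necessarily a component of the given one) — rather than relying on the false Frobenius--K\"onig claim for matrices over a field.

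For what it is worth, the paper's own proof of this Proposition uses the same $x(s)$ and the same Lemma~\ref{function}, and justifies the Hall inequality by a rank/factorization argument that the example above also tests (the identity on a $2$-dimensional space cannot factor through a $1$-dimensional one). So the discrepancy is not that you took a different route from the authors; it is that the explicit linear-algebra shortcut you wrote to support that route is demonstrably false, and this part of the argument needs a different idea.
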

\begin{proof}
First, we show that $|B(I)| = |B(M)|$.  Since $m$ is fixed by $\Lambda$, the commutativity of the diagram below shows that $|B(I)|= rank(f) \leq dim(M(m)) = |B(M)|$.  

\begin{center}
\begin{tikzpicture}[commutative diagrams/every diagram]
	\matrix[matrix of math nodes, name=m, commutative diagrams/every cell,row sep=.7cm,column sep=.45cm] {
	 \pgftransformscale{0.2}
		(\bigoplus\limits_{s \in S}I_s )(m)& & (\bigoplus\limits_{s \in S}I_s {\Lambda}^2)(m) = (\bigoplus\limits_{s \in S}I_s )(m)\\
		& (\bigoplus\limits_{t\in T} M_t\Lambda)(m) = (\bigoplus\limits_{t\in T} M_t)(m)& \\ };
		
		\path[commutative diagrams/.cd, every arrow, every label]
			(m-1-1) edge node {} (m-1-3)
			(m-1-1) edge node[swap] {$\phi$} (m-2-2)
			(m-2-2) edge node[swap] {$\psi \Lambda = \psi$} (m-1-3);

\end{tikzpicture}
\end{center}

Thus, by symmetry $|B(I)| = |B(M)|$.  Now, let $s \in S$.  Since $I_s \to I_s{\Lambda}^2$ is nonzero, its image is in the image of ${\psi \Lambda \phi}_{| I}$.  Thus in particular, there exists a $t \in T$ with  ${\psi}^t_s \Lambda {\phi}^s_t \neq 0$.  That is, Hom$(M_t\Lambda,I_s {\Lambda}^2) \circ \textrm{Hom} (I_s, M_t \Lambda) \neq 0$.  But by Lemma \ref{hom3} then Hom$(I_s \Lambda, M_t {\Lambda}^2)$ is also not equal to zero.  So, Hom$(M_t, I_s{\Lambda}^2)$, Hom$(I_s \Lambda, M_t {\Lambda}^2)$ are both nonzero, hence their composition is nonzero since it is defined at $m$.  But then, up to a scalar, it is the composition $M_t \to M_t {\Lambda}^2$, since Hom$(M_t, M_t {\Lambda}^2) = K$ by Lemma \ref{hom2}.  Thus, there is a $(\Lambda,\Lambda)$-interleaving between $I_s$ and $M_t$.  We have shown that whenever ${\psi}^t_s \Lambda {\phi}^s_t$ is nonzero, there is a $(\Lambda,\Lambda)$-interleaving between $I_s$ and $M_t$.

 Now for $s \in S$, let $x(s) = \{t \in T: {\psi}^t_s \Lambda {\phi}^s_t \neq 0\}$.  Let $S_0 \subseteq S$.  Then, the diagram below commutes
 
 \begin{center}
\begin{tikzpicture}[commutative diagrams/every diagram]
	\matrix[matrix of math nodes, name=m, commutative diagrams/every cell,row sep=.7cm,column sep=.45cm] {
	 \pgftransformscale{0.2}
		\bigoplus\limits_{s \in S_0}I_s & & \bigoplus\limits_{s \in S_0}I_s {\Lambda}^2\\
		& \bigoplus\limits_{\substack{t \in x(s)\\\textrm{some }s \in S_0}} M_t& \\ };
		
		\path[commutative diagrams/.cd, every arrow, every label]
			(m-1-1) edge node {} (m-1-3)
			(m-1-1) edge node[swap] {} (m-2-2)
			(m-2-2) edge node[swap] {} (m-1-3);

\end{tikzpicture}
\end{center}
Hence, by evaluation at $m$, $|S_0| = rank\big\{(\bigoplus\limits_{s \in S_0}I_s)(m) \to (\bigoplus\limits_{s \in S_0}I_s{\Lambda}^2)(m)\big\} \leq |\bigcup\limits_{s \in S_0} x(s)|$.  Then, by Lemma \ref{function}, there is an injection $f$ from $S$ to $T$ with $f(s ) \in x(s)$ for all $s$.  The result follows since $|S| = |T|$ and $t \in x(s)$ implies there is a $(\Lambda, \Lambda)$-interleaving between $I_s$ and $M_t$.  
\end{proof}

We are now ready to prove our main results.

\emph{Proof of Theorem 1}
First, let $P$ be an asymmetric $n$-Vee, $P = \bigcup [n,M_i]$ with $|[m,M_{i_0}]| > |[m,M_i]|$ for $i \neq i_0$, and fix the weight $(a,b)$.  We will prove that any $(\Lambda_1,\Gamma_1)$-interleaving between $I, M \in \mathcal{C}$ produces an $\epsilon$-matching for $\epsilon = \textrm{ max }\{h(\Lambda_1),h(\Gamma_1)\}$.  Once this is established, $D_B \leq D$.  For the other inequality, note that an $\epsilon$-matching yields (after inserting appropriate zero homomorphisms) a diagonal interleaving, thus $D \leq D_B$, and hence equality.

Let $I = \bigoplus\limits_{s \in S}I_s, M = \bigoplus\limits_{t\in T} M_t$ be in $\mathcal{C}$.  If $\mathcal{V}$ is a partition of $P$, for $v \in \mathcal{V}$, let
$$S_v = \{s \in S: \textrm{ the minimal element of Supp}(I_s) \textrm{ is in }v \} .$$

Similarly, define $T_v$.  Now, suppose there is a $(\Lambda_1,\Gamma_1)$-interleaving between $I$ and $M$.  Then, by Lemma \ref{T(P)}, there exists $\Lambda = {\Lambda}_{\epsilon}$ maximal, where $\epsilon = \textrm{ max }\{h(\Lambda_1),h(\Gamma_1)\}$.  By  \cite{bubenik} since $\Lambda_1, \Gamma_1 \leq \Lambda$, there exists a $(\Lambda , \Lambda)$-interleaving between $I$ and $M$.  

First, if $\epsilon < aT+b$, then $\Lambda m = m$.  In this case, consider the partition $\mathcal{V}$ of $P$ given by 
$$\mathcal{V} =  \{(m,M_i]\} \cup \{\{m\}\}, \textrm{ and set }S_m = S_{\{m\}}, S_i = S_{(m,M_i]}.$$

Similarly, set $T_m = T_{\{m\}}$ and $T_i = T_{(m,M_i]}$.  Since $\Lambda m = m$, for all $M$ convex,
\begin{itemize}
\item Supp$(M) \subseteq (m,M_i], M\Lambda \neq 0 \implies $ Supp $M\Lambda \subseteq (m,M_i]$, and
\item $m \in $ Supp$(M) \implies m \in $ Supp$(M\Lambda)$.
\end{itemize}
Therefore, if $s \in S_m, t \in T_i$, then Hom$(I_s, M_t \Lambda) = 0 $.  Similarly, if $t \in T_m, s \in S_i$, then Hom$(M_t, I_s\Lambda) = 0 $.  Then, by Lemma \ref{diagonalize}, we may diagonalize, obtaining $(\Lambda, \Lambda)$-interleavings between 
$$\bigoplus\limits_{s \in S_m}I_s \textrm{ and }\bigoplus\limits_{t \in T_m}M_t, \textrm{ and also between } \bigoplus\limits_{s \notin S_m}I_s \textrm{ and }\bigoplus\limits_{t \notin T_m}M_t.$$

We now diagonalize further.  Again, since $\Lambda m = m$, for each $i \neq j$, $s \in S_i, t \in T_j \implies $Hom$(I_s,M_t \Lambda) = 0$ as well as the symmetric condition.  Thereofore, by applying Lemma \ref{diagonalize} repeatedly, we obtain interleavings between 
$$\bigoplus\limits_{s \in S_v} I_s \textrm{ and }\bigoplus\limits_{t \in T_v} M_t \textrm{ for all } v\in \mathcal{V}.$$   
Hence, by Proposition \ref{propfix}, we get a matching between the elements of the barcodes supported at $m$.  Also, for each $i$, $\Lambda_{|(m,M_i]}$ is a maximal translation on a totally oriented set.  Therefore, for each $i$ we acquire a matching between those elements of the barcode in $S_i$ and $T_i$ by Theorem \ref{matching}.  Thus, an $\epsilon$-matching is produced piecewise.  

Now, suppose $\epsilon = h(\Lambda) \geq aT + b$.  Then, for all convex modules $M$, $M \Lambda$ is identically $0$, or is a convex module supported in $[m, M_{i_0}]$.  Then, $M \Lambda = M \Lambda/{\mathcal{I}}_{i_0} M \Lambda$, thus any homomorphism from $N \to M\Lambda$ factors through $N/{\mathcal{I}}_{i_0}N$.  

Consider the partition 
$$\mathcal{V} = \{[m,M_{i_0}]\} \cup \{(m,M_i]:i \neq i_0\}, \textrm{ and set } S_i = S_{(m,M_i]}, S_m = S_{[m,M_{i_0}]}.$$

Similarly, define $T_m, T_i$.  Then, for $s \in S_m, t \in T_i$, Hom$(I_s, M_t \Lambda) = 0$, since $M_t \Lambda = 0.$  Since the symmetric condition holds as well, again by Lemma \ref{diagonalize} we obtain an interleaving between 
$$\bigoplus\limits_{s \in S_m}I_s \textrm{ and }\bigoplus\limits_{t \in T_m}M_t  , \textrm{ and between } \bigoplus\limits_{s \notin S_m}I_s \textrm{ and }\bigoplus\limits_{t \notin T_m}M_t  \textrm{ respectively.}$$
Since the latter interleaving corresponds to convex modules $N$ with $W(N) \leq \epsilon$, it suffices to match only convex modules with indices in $S_m$ and $T_m$.  

However, the morphisms 
$$\bigoplus\limits_{s \in S_m}I_s \xrightarrow{\phi} \bigoplus\limits_{t \in T_m}M_t\Lambda, \textrm{ and }\bigoplus\limits_{t \in T_m}M_t\xrightarrow{\psi} \bigoplus\limits_{s \in S_m}I_s\Lambda$$
factor through $\bigoplus\limits_{s \in S_m}(I_s /{\mathcal{I}}_{i_0}I_s)$  and  $\bigoplus\limits_{t \in T_m}(M_t /{\mathcal{I}}_{i_0}M_t)$ respectively.  Thus since $\Lambda_{|[m,M_{i_0}]}$ is maximal, again the result follows from Theorem \ref{matching}.

If $P$ is an $n$-Vee but is not asymmetric, then we may not use Lemmas \ref{T(P)}, and \ref{W} explicitly.  It is still the case, however, that for $\epsilon \in \{h(\Gamma):\Gamma \in \mathcal{T}(P)\}$, with $\epsilon < aT + b$, the set $\{\Lambda: h(\Lambda) = \epsilon\}$ has a unique maximal element.  Moreover, the set $\{\Lambda_{\epsilon}: \epsilon < aT + b\}$ is still totally ordered.  Thus, if $I$, $M$ are $(\Lambda, \Gamma)$-interleaved with max$\{h(\Lambda),h(\Gamma)\} < aT+b$ the proof above still goes through.  On the other hand, when $P$ is not asymmetric, for all convex modules $\sigma$, $W(\sigma) \leq  aT+b = aT_{i_0}+b$.  Therefore, though there is not a unique translation with height corresponding to this value, interleavings of this height always produce empty matchings.
\qed

\section{Examples}
\label{section ex}

We conclude with some examples. First, in Example \ref{ex 4}, we decompose an interleaving as in the proof of Theorem \ref{main}.  We also compute the varieties (see Proposition \ref{variety}) corresponding to two interleavings.  Along the way, we construct some non-diagonal interleavings.  In this section, if $M$ is convex with support given by $S$, we write $M \sim S$.




\begin{ex}
\label{ex 4}
Let $P$ be the $2$-Vee, $P=[m,x_3]\cup[m,y_6]$ and let $(a,b)$ be a weight.  Let $\Lambda = \Lambda_a$.
Consider the following convex modules.
\begin{center}
\includegraphics[scale=1]{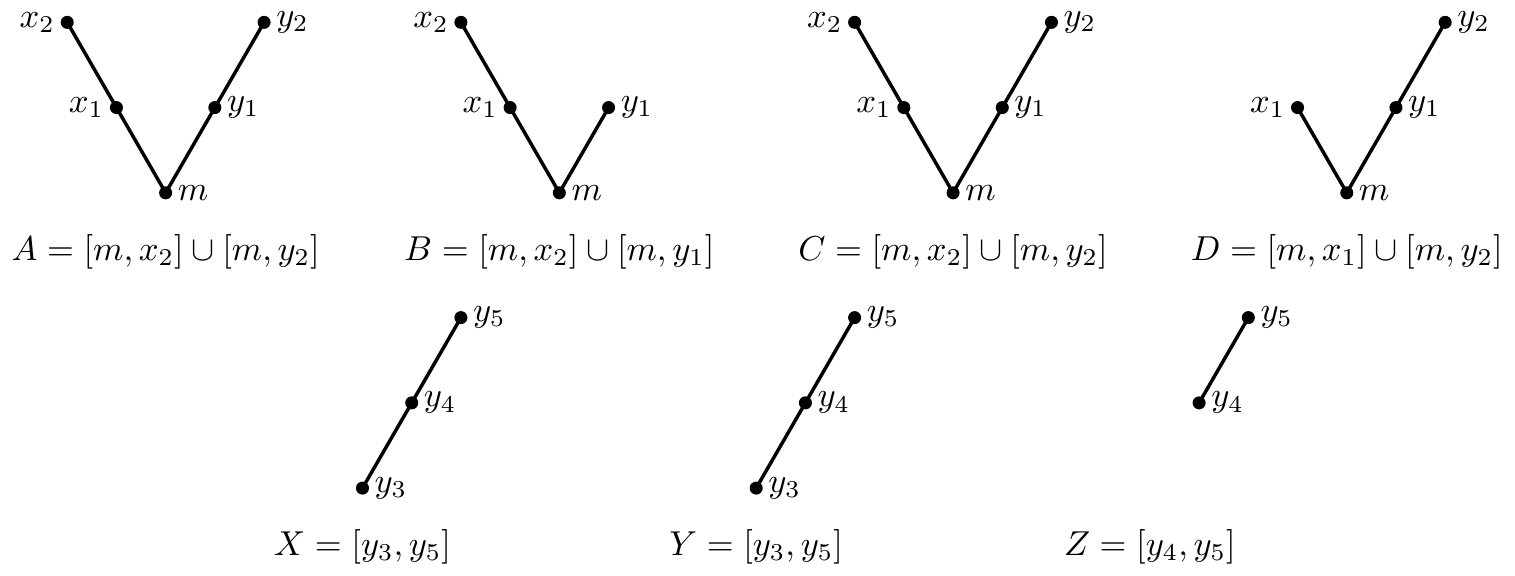}
\end{center}

Let $I=A\oplus B\oplus X$ and $M=C\oplus D\oplus Y\oplus Z$. We will decompose an arbitrary $(\Lambda,\Lambda)$-interleaving between $I$ and $M$ as in the proof of Theorem \ref{main}.  Then, we will calculate the varieties (see Remark \ref{variety}) corresponding to the "factored" interleavings the decomposition produces on the appropriate partition of the barcodes $B(I)$ and $B(M)$.  

First, let $\phi',\psi'$ be any $(\Lambda,\Lambda)$-interleaving between $I$ and $M$.  By Lemma \ref{diagonalize}, since $\Lambda m=m$, there exist
\begin{itemize}
\item a $(\Lambda,\Lambda)$-interleaving between $A\oplus B$ and $C\oplus D$, and
\item a $(\Lambda,\Lambda)$-interleaving between $X$ and $Y\oplus Z$.
\end{itemize}
We treat these separately, referring to each in turn as $\phi,\psi$.  First, we factor each $\phi, \psi$ into their corresponding summands, adopting the previous notation.  For example $\phi^X_Y : X \to Y\Lambda$.  Since $I, M \in \mathcal{C}$, we know that 
 $$\phi^X_Y=\lambda\Phi_{X,Y\Lambda}\textrm{, and }\phi^X_Y\Lambda=\lambda\Phi_{X\Lambda,Y\Lambda^2}.$$ 
 
 Of course, all other similar identities hold as well.  We first concentrate on the modules supported in $(m,M_y]$.  Thus, we have the diagrams below.
\begin{center}
\includegraphics[scale=1]{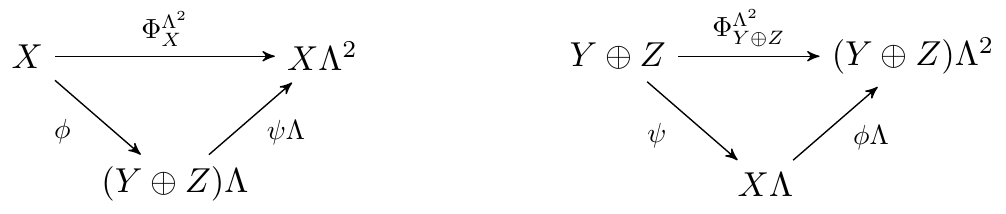}
\end{center}
Therefore, we have the matrices of module homorphisms
$$\phi=\begin{bmatrix}
\phi^X_Y=\alpha\Phi_{X,Y\Lambda} \\
\phi^X_Z=\beta\Phi_{X,Z\Lambda} \\
\end{bmatrix}
\text{ and }
\psi=\begin{bmatrix}
\psi^Y_X=\lambda\Phi_{Y,X\Lambda} & \psi^Z_X=\mu\Phi_{Z,X\Lambda} \\
\end{bmatrix}$$

Since $\phi,\psi$ is a $(\Lambda_a,\Lambda_a)$-interleaving between $X$ and $Y\oplus Z$ equations (2) and (3) below must hold. First, 
\begin{eqnarray}\Phi_X^{\Lambda^2}=
\begin{bmatrix}
\Phi^{\Lambda^2}_X \\
\end{bmatrix}
=
\begin{bmatrix}
(\psi^Y_X\Lambda)\phi^X_Y+(\psi^Z_X\Lambda)\phi^X_Z \\
\end{bmatrix}=
\begin{bmatrix}
(\lambda\alpha+\mu\beta)\Phi_X^{\Lambda^2} \\
\end{bmatrix}.\end{eqnarray}

And then,  
\begin{eqnarray}\Phi_{Y\oplus Z}^{\Lambda^2}=
\begin{bmatrix}
\Phi^{\Lambda^2}_Y & 0 \\
0 & \Phi^{\Lambda^2}_Z \\
\end{bmatrix}
=
\begin{bmatrix}
\alpha\lambda\Phi_{Y,Y\Lambda^2} & \alpha\mu\Phi_{Z,Y\Lambda^2} \\
\beta\lambda\Phi_{Y,Z\Lambda^2} & \beta\mu\Phi_{Z,Z\Lambda^2} \\
\end{bmatrix}.\end{eqnarray}
Note that the above equations define the variety $V^{\Lambda,\Lambda}(X,Y \oplus Z)$, because in the notation of Proposition \ref{variety}, no variables are deleted between $R$ and $\bar{R}$, and $Q$ and $\bar{Q}$ respectively.  For computational purposes, it is convenient to arrange the relevant convex modules side by side:
\begin{center}
\includegraphics[scale=1]{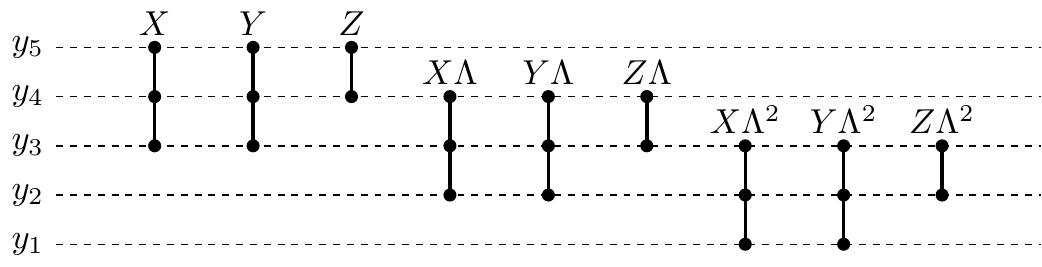}
\end{center}

Notice that evaluating (2) at all elements of $P$ we have
$$\left(\Phi_X^{\Lambda^2}\right)(y_3)=\begin{bmatrix} 1 \\ \end{bmatrix}\text{ and }\left(\Phi_X^{\Lambda^2}\right)(i)=\begin{bmatrix} 0 \\ \end{bmatrix} \textrm{ for any } i\neq y_3.$$

Similarly, evaluating (3) we have  $$\left(\Phi_{Y\oplus Z}^{\Lambda^2}\right)(y_3)=\begin{bmatrix} 1 & 0 \\ 0 & 0 \\ \end{bmatrix}\text{ and }\left(\Phi_{Y\oplus Z}^{\Lambda^2}\right)(i)=\begin{bmatrix} 0 & 0 \\ 0 & 0 \\ \end{bmatrix} \textrm{ for }i\neq y_3.$$

Therefore, we get 
the following system of equations:

$$\lambda\alpha+\mu\beta=1, \alpha\lambda=1, \beta\lambda=0.$$

Thus, $V^{\Lambda,\Lambda}(X,Y\oplus Z)$ is the affine variety with coordinate ring $K[\lambda,\mu,\alpha,\beta]$ modulo the ideal $\langle \lambda\alpha+\mu\beta-1, \alpha\lambda-1$, $\lambda\beta\rangle$ (see Proposition \ref{variety}).

This corresponds to one choice of parameter in $K^*$ and one in $K$.\\



Next consider the modules supported at $m$.  We now compute the variety $V^{\Lambda,\Lambda}(A\oplus B, C \oplus D)$.  Again, we must have the commutative triangles below.

\begin{center}
\includegraphics[scale=1]{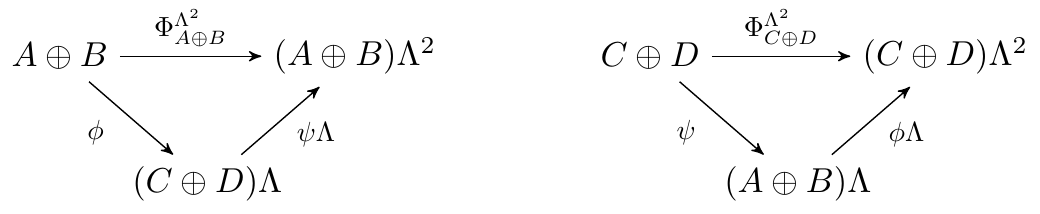}
\end{center}

Decomposing $\phi, \psi$ we have,
$$\phi=\begin{bmatrix}
\phi^A_C=e\Phi_{A,C\Lambda} & \phi^B_C=f\Phi_{B,C\Lambda} \\
\phi^A_D=g\Phi_{A,D\Lambda} & \phi^B_D=h\Phi_{B,D\Lambda} \\
\end{bmatrix}
\text{ and }
\psi=\begin{bmatrix}
\psi^C_A=i\Phi_{C,A\Lambda} & \psi^D_A=j\Phi_{D,A\Lambda} \\
\psi^C_B=k\Phi_{C,B\Lambda} & \psi^D_B=l\Phi_{D,B\Lambda} \\
\end{bmatrix}.$$

Since $\phi,\psi$ is a $(\Lambda,\Lambda)$-interleaving between $A\oplus B$ and $C\oplus D$ (and since no variables are eliminated from $Q$ to $\bar{Q}$) we have,

$$\Phi_{A\oplus B}^{\Lambda^2}=
\begin{bmatrix}
\Phi^{\Lambda^2}_A & 0 \\
0 & \Phi^{\Lambda^2}_B \\
\end{bmatrix}
=
\begin{bmatrix}
(ei+gj)\Phi_{A,A\Lambda^2} & (fi+hj)\Phi_{B,A\Lambda^2} \\
(ek+gl)\Phi_{A,B\Lambda^2} & (fk+hl)\Phi_{B,B\Lambda^2} \\
\end{bmatrix}$$
Evaluating everything at $m$, we obtain
$$\begin{bmatrix}
ei+gj & fi+hj \\
ek+gl & fk+hl \\
\end{bmatrix}
=
\begin{bmatrix}
1 & 0 \\
0 & 1 \\
\end{bmatrix}.$$
Since evaluation at any other element of the poset makes all equations trivial, this identity (along with the redundant one obtained from ${\Phi}_{C \oplus D}^{{\Lambda}^2}$) is the only necessary condition.  Therefore, the space of $(\Lambda,\Lambda)$-interleavings between $A\oplus B$ and $C\oplus D$, $V^{\Lambda,\Lambda}(A \oplus B, C \oplus D)$ is $Gl_2(K)$, the variety of invertible $2\times 2$ matrices.  In particular, many interleavings between $A \oplus B$ and $C \oplus D$ are as far from diagonal as possible.

By choosing a point in each variety separately, we obtain an interleaving between $I$ and $M$.  Note that although we produced many interleavings, we did not classify the $(\Lambda,\Lambda)$-interleavings between $I$ and $M$.  This is clear, as we passed from our original $\phi', \psi'$ to a pair of separate interleavings on a partition of each barcode.  In the next example we compute the full variety of interleavings between two elements of $\mathcal{C}$.

\end{ex}

\begin{ex}
\label{ex 3}
Let $P$ be again be the $2$-Vee, $P =[m,x_3]\cup[m,y_6]$.  Let $(a,b)$ be a weight, and set $\Lambda=\Lambda_{2a}$.
\begin{center}
\includegraphics[scale=.8]{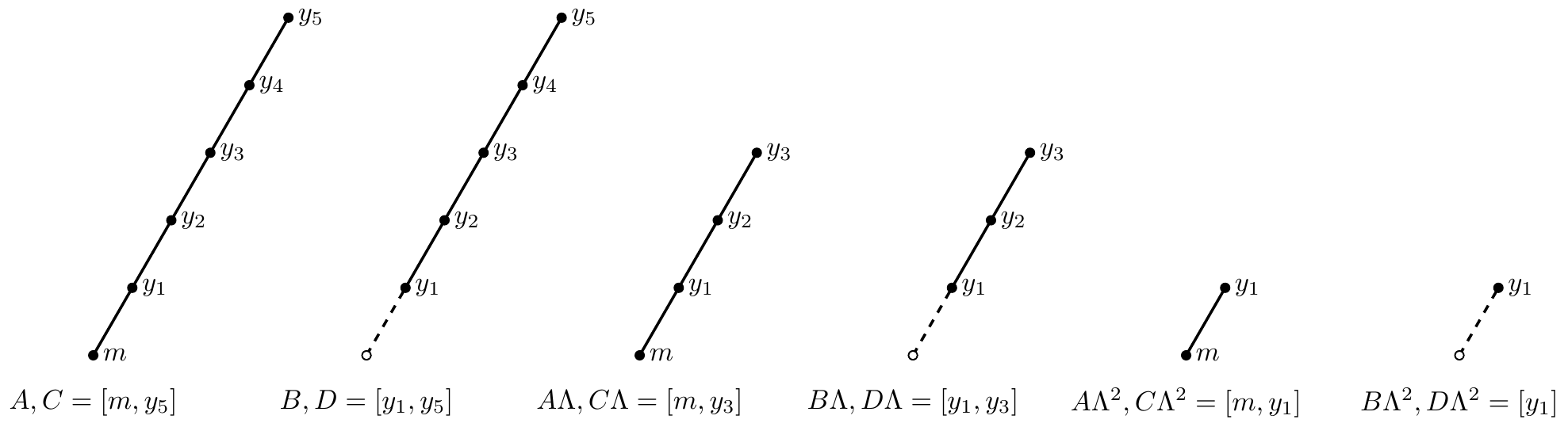}
\end{center}

Let $I=A\oplus B$ and $M=C\oplus D$, and let $\Lambda = \Lambda_{2a}$.  We will calculate$V^{\Lambda,\Lambda}(I,M)$, the variety corresponding to all $(\Lambda,\Lambda)$-interleavings between $I$ and $M$ (see Proposition \ref{variety}).  First, consider an arbitrary such interleaving, $\phi,\psi$.  Of course, as always this yields the standard commutative triangles.  Since $\Lambda m=m$, there exist no non-zero morphisms from $A$ into any module not containing the minimal $m$.  Moreover, $m \in \textrm{Supp}(\sigma \Lambda)$ if and only if $m \in \textrm{Supp}(\sigma)$.  Hence, using our standard notation, it must be the case that $\phi^A_D$ and $\psi^C_B$ are identically zeros. Then, as matrices of module homomorphisms, we have

$$\phi=\begin{bmatrix}
\phi^A_C & \phi^B_C \\
0 & \phi^B_D \\
\end{bmatrix}=
\begin{bmatrix}
\lambda\Phi^A_{C\Lambda} & \rho\Phi^B_{C\Lambda} \\
0 & \mu\Phi^B_{D\Lambda} \\
\end{bmatrix}
\text{ and }
\psi=\begin{bmatrix}
\psi^C_A & \psi^D_A \\
0 & \psi^D_B \\
\end{bmatrix}=
\begin{bmatrix}
\alpha\Phi^C_{A\Lambda} & \gamma\Phi^D_{A\Lambda} \\
0 & \beta\Phi^D_{B\Lambda} \\
\end{bmatrix}$$

Since $\phi$ and $\psi$ constitute a $(\Lambda,\Lambda)$-interleaving, we obtain equations (4) and (5) below.
\begin{eqnarray}\Phi_I^{\Lambda^2}=\Phi_{A\oplus B}^{\Lambda^2}=
\begin{bmatrix}
\Phi_A^{\Lambda^2} & 0 \\
0 & \Phi_B^{\Lambda^2} \\
\end{bmatrix}
=
\begin{bmatrix}
\alpha\lambda\Phi_{A,A\Lambda^2} & (\alpha\rho+\gamma\mu)\Phi_{B,A\Lambda^2} \\
0 & \beta\mu\Phi_{B,B\Lambda^2} \\
\end{bmatrix}
\end{eqnarray}

\begin{eqnarray}\Phi_M^{\Lambda^2}=\Phi_{C\oplus D}^{\Lambda^2}=
\begin{bmatrix}
\Phi_C^{\Lambda^2} & 0 \\
0 & \Phi_D^{\Lambda^2} \\
\end{bmatrix}
=
\begin{bmatrix}
\lambda\alpha\Phi_{C,C\Lambda^2} & (\lambda\gamma+\rho\beta)\Phi_{D,C\Lambda^2} \\
0 & \mu\beta\Phi_{D,D\Lambda^2} \\
\end{bmatrix}\end{eqnarray}
Evaluating (4), we see that 

$$\left(\Phi_I^{\Lambda^2}\right)(m)=\begin{bmatrix} 1 & 0 \\ 0 & 0 \\ \end{bmatrix}, \left(\Phi_I^{\Lambda^2}\right)(y_1)=\begin{bmatrix} 1 & 0 \\ 0 & 1 \\ \end{bmatrix}, \textrm{ and }\left(\Phi_I^{\Lambda^2}\right)(x)=\begin{bmatrix} 0 & 0 \\ 0 & 0 \\ \end{bmatrix} \textrm{ for }x \neq m, y_1$$

From the evaluation at $y_1$, we get the restrictions: 
$$\alpha\lambda=1, \beta\mu=1, \alpha\rho+\gamma\mu=0,$$
the last of which only appears because $\Phi_{B,A\Lambda^2}\neq 0$. 

When evaluating at $m$, we obtain the redundant constraint $\alpha\lambda=1$.  Since every homomorphism in $(4)$ has support contained in $[m,y_1]$, so there are no further restrictions from (4).

By inspection, evaluating (5) obtains no new conditions, since
$$\alpha \rho + \gamma \mu = 0 \iff \rho = -\lambda \gamma \mu \iff \lambda \gamma + \rho \beta = 0.$$

Therefore, $V^{\Lambda,\Lambda}(I,M)$ is the affine variety with coordinate ring $K[\lambda,\mu,\rho,\alpha,\beta,\gamma]$ modulo the ideal $\langle \lambda\alpha-1, \mu\beta-1, \alpha \rho + \gamma \mu\rangle$.

Thus, the interleavings are parametrized by two elements of $K^*$ and one element of $K$.  

\end{ex}

We point out that in both Examples \ref{ex 4} and \ref{ex 3} there were additional degrees of freedom for interleavings not seen when one passes to a matching.  In Example \ref{ex new} we realize the interleaving distance as the minimum height of a translation with non-empty variety.

\medskip
\begin{ex}
\label{ex new}

\medskip

Let $P$ be the $1$-Vee $P=[m,x_3]$, and let $(a,b)$ be a weight, with $a < b$. Consider the convex modules
\begin{center}
\includegraphics[scale=1]{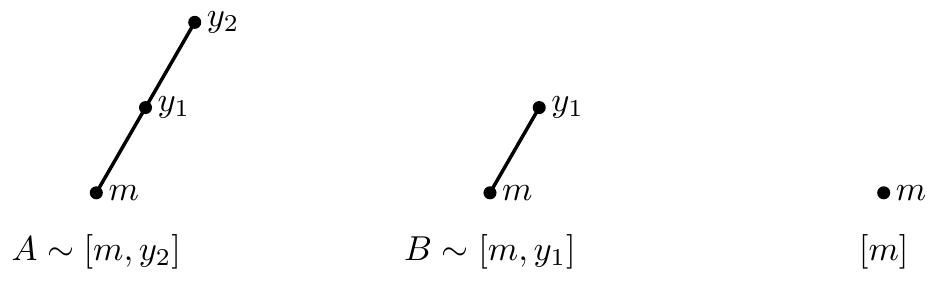}
\end{center}
We will calculate the variety $V^{\Lambda,\Lambda}(I,M)$ for various choices of translations.  We will use these calculations to point out the interleaving distance as in Remark \ref{variety distance}.  Whatever translations we consider, we always set

$$\phi=\begin{bmatrix}
\phi^A_B=\alpha\Phi_{A,B\Lambda} \\
\end{bmatrix}
\text{ and }
\psi=\begin{bmatrix}
\psi^B_A=\beta\Phi_{B,A\Lambda}. \\
\end{bmatrix}$$ 

First, let $\Lambda=\Lambda_0$, the identity translation.  We can see that there is no $(\Lambda_0,\Lambda_0)$-interleaving between $A$ and $B$, since $\Phi_A^{\Lambda^2}\neq0$ but $\textrm{Hom}(B,A\Lambda)=0$.  The variety of $(\Lambda_0,\Lambda_0)$-interleavings therefore, must be the empty variety.  We recover this via the equation,

$$[{\Phi}_A^{\Lambda^2}] = \bar{Q}\cdot R = [0]\cdot [\alpha{\Phi}_{A,B\Lambda}] $$

(using the notation of Proposition \ref{variety}) which has no solution, say when evaluated at $m$.  Thus, for $\Lambda = \Gamma = \Lambda_0$, the variety $V^{\Lambda, \Gamma}(I,M)$ is empty, as required
\vspace{.2 in}.


Now, say $\Lambda=\Lambda_a$.  Since $a < b$, $\Lambda_a$ is the next largest maximal translations.  Then, $A\Lambda \cong B$ and $B\Lambda \cong C$.  Moreover, $\textrm{Hom}(A,B\Lambda), \textrm{Hom}(B,A\Lambda) \cong K$.  The space of interleavings are defined by the equations

$$\begin{bmatrix}
\Phi_A^{\Lambda^2} \\
\end{bmatrix}
=
\bar{Q} \cdot R
=
\begin{bmatrix}
\alpha\beta(\Phi_{B\Lambda,A\Lambda^2}\cdot\Phi_{A, B\Lambda})
\end{bmatrix}, \begin{bmatrix}
\Phi_B^{\Lambda^2} \\
\end{bmatrix}
=
\bar{R}\cdot Q
=
\begin{bmatrix}
0 \cdot \beta(\Phi_{A\Lambda,B\Lambda^2}\cdot\Phi_{B,A\Lambda})
\end{bmatrix}$$ 

\medskip
Note that the variable $\alpha$ is absent from $\bar{R}$, since Hom$(A\Lambda,B\Lambda^2) = 0$.  The first equation yields $\alpha \beta =1$ by evaluating at $m$, and the second equation is consistent and trivially satisfied since ${\Phi}_B^{\Lambda^2} = 0$.  

Therefore, the space of $(\Lambda_a,\Lambda_a)$-interleavings corresponds to the affine variety with coordinate ring $K[\alpha,\beta]$ modulo the ideal $\langle\alpha\beta-1\rangle$. This corresponds to a choice of one parameter in $K^*$.  Also, since $\epsilon = a$ corresponds to the first non-zero variety, we can see that $D(A,B) = a$.

\vspace{.2 in}
Now suppose that $\Lambda=\Gamma= \Lambda_{2a}$.  Then, $A\Lambda \cong C$ and $B\Lambda \cong 0$.  Also, $\textrm{Hom}(A,B\Lambda)=0$ and $\textrm{Hom}(B,A\Lambda)\cong K$.

The space of interleavings are defined by the equations

$$\begin{bmatrix}
\Phi_A^{\Lambda^2} \\
\end{bmatrix}
=
\bar{Q} \cdot R
=
\begin{bmatrix}
0 \cdot 0 (\Phi_{B\Lambda,A\Lambda^2}\cdot\Phi_{A, B\Lambda})
\end{bmatrix}, \begin{bmatrix}
\Phi_B^{\Lambda^2} \\
\end{bmatrix}
=
\bar{R}\cdot Q
=
\begin{bmatrix}
0 \cdot \beta(\Phi_{A\Lambda,B\Lambda^2}\cdot\Phi_{B,A\Lambda})
\end{bmatrix}.$$

Since ${\Phi}_A^{\Lambda^2}, {\Phi}_B^{\Lambda^2}$ are both identically zero, any value of $\beta$ satisfies the above equations.  Therefore, in this case $V^{\Lambda, \Gamma}(A,B)$ corresponds to the affine variety with coordinate ring $K[\beta]$. Of course, this corresponds to a choice of one parameter in $K$.

On the other hand, when $\Lambda=\Gamma=\Lambda_l$ for $l\geq3a$, $A\Lambda=B\Lambda=0$.  In this case, the only interleaving between $A$ and $B$ is $\phi=\psi=0$.  That is to say, tfor such translations, $V^{\Lambda,\Gamma}(A,B)$ is the $0$ variety.

Putting it all together, we obtain the curve below with values in the category of affine varieties.  Only the jump discontinuities are labeled.  In a slight abuse of notation,  we write the coordinate rings instead of their corresponding varieties.  
\begin{center}
\includegraphics[scale=1]{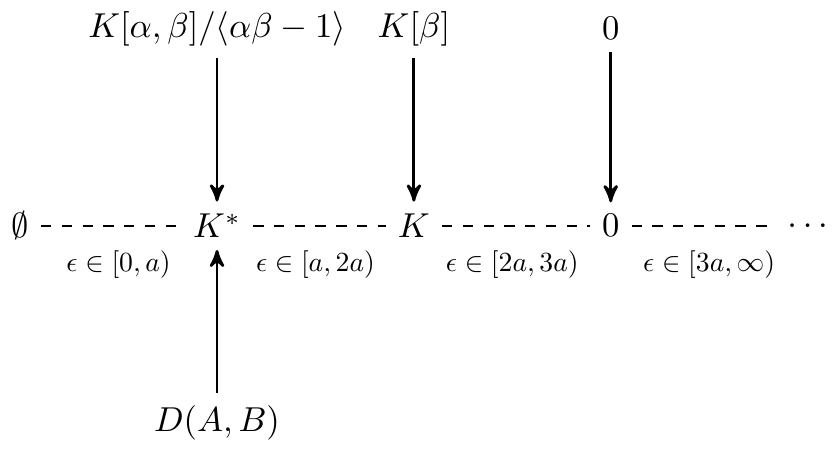}
\end{center}

\end{ex}

\tikzstyle{block} = [rectangle, draw,fill=white,text centered, rounded corners]
\tikzstyle{colored1} = [rectangle, draw, fill=blue!20,text centered, rounded corners, minimum height=4em]

\begin{rmk}
In \cite{meehan_meyer_2} we investigate the non-democratic choice of weights on a finite totally ordered set.  Here we will show that from the perspective of topological data analysis, both potential problems associated with discretizing one-dimensional persistence modules can be overcome.  Moreover, we recover the (classical) interleaving distance as a limit of discrete distances.  In future work, we will study the geometric formulation of interleaving distances (see Remark \ref{variety distance} and Example \ref{ex new}). 
\end{rmk}

\bibliography{master_bib}
\bibliographystyle{alpha}

\end{document}